\newtheorem{theorem}{Theorem}[section]
\newtheorem{proposition}[theorem]{Proposition}
\newtheorem{lemma}[theorem]{Lemma}
\newtheorem{corollary}[theorem]{Corollary}
\theoremstyle{definition}
\newtheorem{remark}[theorem]{Remark}
\def\R{\mathbb{R}}
\def\N{\mathbb{N}}
\def\C{\mathbb{C}}
\def\E{\mathbb{E}}
\def \sgn{\operatorname{sgn}}
\def \supp{\operatorname{supp}}
\def\cA{{\mathcal A}}
\def\cC{{\mathcal C}}
\def\cD{{\mathcal D}}
\def\cF{{\mathcal F}}
\def\cK{{\mathcal K}}
\def\cL{{\mathcal L}}
\def\cR{{\mathcal R}}
\def\cS{{\mathcal S}}
\def\cW{{\mathcal W}}
\def\cE{{\mathcal E}}
\def\cO{{\mathcal O}}
\def\cG{{\mathcal G}}
\def\cT{{\mathcal T}}
\def\cB{{\mathcal B}}
\def\cP{{\mathcal P}}
\def\cJ{{\mathcal J}}
\def\m{\mathfrak{M}}
\newcommand{\ud}{\mathrm{d}}
\newcommand{\X}{{X}}
\newcommand{\B}{{B}}
\def\trXY1-tq{Tr(X,Y, 1-\theta ,q)}
\newcommand{\bracket}[2]{\left\langle #1 , #2 \right\rangle}
\definecolor{darkred}{rgb}{0.7,0.1,0.1}
\title[The maximal regularity of integro-differential equations]{The maximal regularity property of abstract integro-differential equations}
\author{Sebastian Kr\'ol}
\address{Sebastian Kr{\'o}l, Faculty of Mathematics and Computer Science, Adam Mickiewicz University Pozna{\'n}, ul. Uniwersytetu Pozna{\'n}skiego 4, 61-614 Pozna{\'n}, Poland}
\email{sebastian.krol@amu.edu.pl}
\thanks{}
\begin{document}

\keywords{Besov spaces, Triebel-Lizorkin spaces,  integro-differential equations, maximal regularity, Fourier multipliers, Hardy-Littlewood maximal operator, Rubio de Francia iteration algorithm}

\subjclass{42B37, 42B20, 42A45, 45N05, 46N20}  

\begin{abstract}
We provide a convenient framework for the study of the well-posedness of a variety of abstract (integro)differential equations in general Banach function spaces. It allows us to extend and complement the known theory on the maximal regularity of such equations. 

More precisely, by methods of harmonic analysis, we identify large classes of Banach spaces which are invariant with respect to distributional Fourier multipliers. 
Such classes include general vector-valued Banach function spaces $\Phi$ and/or the scales of Besov and Triebel-Lizorkin spaces defined by $\Phi$.

We apply this result to the study of the well-posedness and maximal regularity property of abstract second-order integro-differential equation, which models various types of elliptic and parabolic problems arising in different areas of applied mathematics. 
\end{abstract}

\renewcommand{\subjclassname}{\textup{2010} Mathematics Subject Classification}

\maketitle

\section{Introduction with preliminary considerations}\label{intro}

The paper continues the well-known research program on the study of the well-posedness and maximal regularity of pseudodifferential equations in a variety of function spaces via techniques of vector-valued Fourier multiplier theory.  
In a quite general framework, this program was already formulated by Amann in \cite[Chapter 3]{Am95} and was labelled as '{\it pairs of maximal regularity}'. In one of the pioneering papers in this area, \cite{Am97}, Amann showed how the boundedness of translation-invariant operators with operator-valued symbols guarantees the well-posedness of various elliptic and parabolic (integro)differential equations in a scale of vector-valued Besov spaces. His paper constitutes a conceptual background and toolbox for further extensions in the literature.
Here, we just mention the breakthrough result due to Weis \cite{We01} on the characterization of the $L^p$-maximal regularity of the first-order Cauchy problems and a few others, seminal papers \cite{ArBu02}, \cite{ArBaBu04}, \cite{ArBu04}, \cite{ChSr05}, \cite{ChFi14}, \cite{KeLi04} (see also the references therein).
In this paper, we provide a convenient framework for such studies. In particular, it allows us to extend and unify several results from the literature.

 To describe our main results and explain difficulties arising during their studies, for the transparency, we consider only a special form of a more general abstract second-order integrodifferential equation \eqref{P} discussed in Section \ref{app}; see Theorem \ref{main for itegro-dff} with Corollary \ref{for umd} for the corresponding results. 
Moreover, to keep this presentation compact, we closely follow the notation and terminology from Amann's paper \cite{Am97}, which we only roughly recall below, and refer the reader to \cite{Am97} for more details in the case.  It allows us to keep the novelty of our results more transparent. 

Consider the following abstract evolution problem on the line:
\begin{equation}\label{P1}
 \partial u + A u  = f \quad \textrm{ in } \cS'(\R;X).
\end{equation}
Here $X$ is a Banach space and by $\partial$ we denote a distributional derivative, that is, $\partial u (\phi) := - u(\partial \phi)$ for every $X$-valued distribution $u\in \cD'(X):=\cD'(\R; X)$ and every test function $\phi \in \cD:=\cD(\R; \C)$. Moreover, $A$ stands for a closed, linear operator on  $X$ with domain $D_A$ equipped with the graph norm and denoted by $Y$ in this section.

Under suitable assumptions on the operator $A$ one can show that for every tempered distribution $f\in \cS'(X):=\cS'(\R; X)$ the problem \eqref{P1} has a unique distributional solution $u_f\in \cS'(Y)$. For instance, suppose that $i\R\subset \rho(A)$, where $\rho(A)$ stands for the resolvent set of $A$ and that the function  $a(t):=(it + A)^{-1} \in \cL(X,Y)$, $t\in \R$, has a polynomial growth at infinity.
Then, one can easily argue that the {\it solution operator} $U$ for \eqref{P1} is given by 
\begin{equation}
U: \cS'(X)\ni f\mapsto  \cF^{-1} \Theta (a, \cF f) \in \cS'(Y) \subset \cS'(X).
\end{equation}
Here, $\Theta$ denotes a hypocontinuous, bilinear map from $O_M(\cL(X,Y))\times \cS'(X)$ into $\cS'(Y)$ (see the kernel theorem, e.g. in \cite[Theorem 2.1]{Am97}), and  $O_M(\cL(X,Y))$ stands for the space of slowly increasing, smooth $\cL(X,Y)$-valued functions on $\R$.

One of the main questions of the distributional theory of partial differential equations, in the context of \eqref{P1}, is to specify further assumptions on $A$ and $f$ such that \eqref{P1} is satisfied in a more classical, strong sense.
That is, naturally restricting $f$ to the class of regular distributions, i.e. $f \in \cS'_r(X):= L^1_{loc}(X)\cap \cS'(X)$, we ask when $Uf$ is in the Sobolev class $W^{1,1}_{loc}(X)$, which means that $Uf$, $\partial Uf \in L_{loc}^1(X)$. Then,  in particular, the classical derivative $(Uf)'$ of $Uf$ exists a.e. on $\R$, $\partial Uf = (Uf)'$, $Uf(t)\in Y$ for a.e. $t\in \R$,  and \eqref{P1} is satisfied pointwise a.e on $\R$.
In other words, we would like to identify a subspace of $\cS'_r(X)$ such that the solution operator $U$ maps it into $L^1_{loc}(Y)$ or equivalently
the operator $AU$ maps it into $L^1_{loc}(X)$.

The other related question, which is of main importance in the study of nonlinear problems associated with $A$, concerns the property of {\it maximal regularity with respect to a given Banach space} $\E(X) := \E(\R;X)\subset \cS'(X)$.
 That is, under suitable assumptions on $A$ and $X$, we would like to identify a class of Banach spaces $\E(X)\subset \cS'(X)$ such that for all $f\in \E(X)$ each summand on the left side of \eqref{P1}, i.e. $\partial Uf$ and $AUf$, belongs to $\E(X)$. 
Since $0\in\rho(A)$,  it is equivalent to say that the corresponding distributional solution $Uf$ of \eqref{P1} is in $\E(Y)$ (or, equivalently,  $\partial Uf  \in \E(X)$). Therefore,  this question can be rephrased as the question about the identification of those Banach spaces $\E(X)\subset\cS'(X)$, which are {\it invariant} with respect to $AU$. 
Moreover, by the simple algebraic structure of \eqref{P1}, if (in addition) $\E(X)\subset L^1_{loc}(X)$, then  we get that $Uf$ is a strong solution of \eqref{P1}, that is, $Uf \in W^{1,1}_{loc}(X)$ for every $f\in \E(X)$.

The above questions constitute a natural research program, which can be easily reformulated for a variety of different types of (non)autonomous integrodifferential equations.  In this paper, following \cite{KeAp20, ApKe20}, we address such questions to an abstract integrodifferential equation (see \eqref{P}), which covers various types of particular abstract evolution problems already studied in the literature, e.g. the problem \eqref{P1}; see also Section \ref{cases}.

Our main result, Theorem \ref{main for itegro-dff} with Corollary \ref{for umd}, in the particular case of the equation \eqref{P1}, reads as follows:

 {\it Assume that $A$ is an invertible, bisectorial operator on a Banach space $X$. Let $\Phi$ be an arbitrary Banach function space $\Phi$ over $(\R , \ud t)$ such that the Hardy-Littlewood maximal operator $M$ is bounded on $\Phi$ and its dual $\Phi'$.
Let $\Phi(X)$ denote  the $X$-valued version of $\Phi$,  and  $B^{s,q}_\Phi(X)$ and  $F^{s,q}_\Phi(X)$ ($s\in \R, q\in [1, \infty]$) be the corresponding scales of Besov and Triebel-Lizorkin spaces defined on the basis of $\Phi$. 
Then,
\begin{itemize}
\item [(i)] if $X$ is a Hilbert space then for each 
\[
\E \in \left\{ \Phi,\, B^{s,q}_\Phi,\, F^{s,r}_\Phi : s\in \R, q\in [1,\infty], r\in (1,\infty)\right\},
\] the Banach space $\E(X)$ is invariant with respect to $AU$. In particular, 
\[
\| Uf\|_{\E(X)} + \|A Uf\|_{\E(X)} + \|\partial Uf\|_{\E(X)} \lesssim \|f\|_{\E(X)} \quad (f\in \E(X)).
\]
 Consequently, for each $f\in \Phi(X)$ the distributional solution $Uf$ of \eqref{P1} is a strong one, that is, $U(\Phi(X))\subset W^{1,1}_{loc}(X)$ and \eqref{P1} holds a.e. on $\R$.
\item [(ii)] if $X$ has $U\!M\!D$ property and, additionally, $A$ is $R$-bisectorial, then the conclusion of $(i)$ holds.
\item [(iii)] for every $s\in \R$ and $q\in [1,\infty]$, the space $B^{s,q}_{\Phi}(X)$ is invariant with respect to $AU$.\\[-1ex]
\end{itemize}}

 We refer to Section \ref{spaces} for the definition of spaces involved in the above formulation. In the context of our first question, we should also point out a consequence of Theorem \ref{RdeF}(i), which says that
\[
\bigcup \Phi (X) = \bigcup_{w\in A_2} L^2(\R; w\ud t; X).
\]
The sum on the left side is taken over all Banach function spaces $\Phi$ such that the Hardy-Littlewood maximal operator $M$ is bounded on $\Phi$ and its dual $\Phi'$, and $A_2$ stands for the Muckenhoupt class of $A_2$-weights on $\R$.

Note that the formulation of the program of 'pairs of maximal regularity' from \cite[Chapter III]{Am95} slightly differs from the one expressed by these two above questions. 
Here, we do not a priori restrict ourselves to the class of spaces such that $\E(Y)\subset W^{1,1}_{loc}(X)$ (cf. \cite[Chapter III.1.5, p.94]{Am95}). 
For the problem \eqref{P1}, as was already pointed out above, if $\E(X)$ is invariant with respect to $AU$ and $\E(X)\subset \cS'_r(X)$, then a simply algebraic structure of \eqref{P1} yields immediately that $U(\E(X))\subset  W^{1,1}_{loc}(X)$. But for equations with a more general structure than that one of \eqref{P1}, the existence of distributional/strong solutions and {\it maximal regularity property} usually can be obtained under different conditions. 

In any case, the basic idea comes from the study of \eqref{P1}. It relies on the study invariant subspaces with respect to operators corresponding to the solution operator for a given problem via their boundedness properties, i.e.,  norm estimates they satisfy. 
More precisely, in the context of \eqref{P1},  the operator $U$ can be expressed as a convolution operator with a distributional kernel $\cF^{-1}a \in \cS'(\cL(X,Y))$, where $\bracket{\cF^{-1}a}{\phi}:=\bracket{a}{\cF^{-1} \phi}$ for all $\phi \in \cS$.  
That is, 
\[
Uf = \left(\cF^{-1}a\right) \ast f, 
\]
which holds for $f\in \cE'(X) \cup S(X)$ (see \cite[Section 3]{Am97}). 
 Next,  the methods of the singular integral operator theory allow showing the existence of an operator $T$ in $\cL(\E(X),\E(Y))$, which agrees with the operator $U$ on $S(X)$ (or $L^1_c(X)$).

For a Banach space $\E(X) \subset \cS'(X)$, which contains $S(X)$ as a dense subset, it immediately gives that $\E(X)$ is invariant with respect to $AU$. 
However, in the case when $\cS(X)$ is not dense in $\E(X) \subset \cS'_r(X)$, e.g., when $\E(X)$ is a Lorentz space $L^{p,\infty}(X)$ or a Besov type space $B^{s,q}_{L^{p,\infty}}(X)$ corresponding to it (see Section \ref{spaces}), it is not clear how to show directly that $Tf = U f$ for every $f\in \E(X)$ (cf. \cite[Problems 3.2 and 3.3]{HyWe06}, which arise in the study of the boundedness of Fourier multipliers on the classical Besov spaces $B^{s,q}_\infty(X)$, i.e., defined on the basis of $\Phi = L^\infty$). 

Such consistency of the solution operators with their bounded {\it extensions} resulting from the convolution representations is one of the main problems we address in the context of general Banach spaces considered in this paper; see Proposition \ref{bd multip} for the corresponding result. To solve it, we provide a suitable adaptation of the Rubio de Francia iteration algorithm, see Theorem \ref{RdeF}, which allows for relaxing some difficulties with density arguments that arise in such and related considerations in the literature; see, e.g. Remarks \ref{rem Lerner}(b) and \ref{rem RdF}(b).

The paper is organized as follows. In Section 2 we introduce the classes of Banach spaces with respect to which the questions on well-posedness and maximal regularity are examined. Their basic properties, which are crucial for the proofs of main results, are stated in Lemmas \ref{completness} and \ref{lem equiv norms}. 
 Section 3 contains the main extrapolation results, see Theorem \ref{ext general} and its specification to a class of Fourier multipliers, Proposition \ref{bd multip}.  
These results are applied in Section \ref{app} to derive the main results on the well-posedness and maximal regularity property of abstract integrodifferential equations \eqref{P}; see  Theorem \ref{main for itegro-dff} and Corollary \ref{for umd}. We conclude with some comments on particular cases of \eqref{P}, which may be of independent interest.

\section{Function spaces}\label{spaces}
Throughout, the symbol $\Phi$ is reserved to denote a Banach function space over $(\R, \ud t)$. We refer the reader to the monograph by Bennett and Sharpley \cite{BeSh88} for the background on Banach function spaces. Here, we mentioned only several facts we use in the sequel. 

Namely, each $\Phi$ is a Banach space, which is an order ideal of $L^0:=L^0(\R; \ud t)$, that is, for every $f\in L^0$ and $g\in \Phi$ if $|f|\leq |g|$, then $f\in \Phi$ and $\|f\|_\Phi \leq \|g\|$. 
Here, $L^0$ stands for the space of all complex measurable functions on $\R$ (as usual, any two functions equal almost everywhere are identified).
Moreover, $\Phi$ has Fatou's property, and by the Lorentz-Luxemburg theorem \cite[Theorem 2.7, p.10]{BeSh88}, $(\Phi')' = \Phi$ with equal norms. Here, $\Phi'$ denotes the {\it (K\"othe) dual} (or {\it associated space}) of $\Phi$; see \cite{BeSh88}.

We define the vector-valued variant of Banach function spaces $\Phi$ as follows. Let $X$ be a Banach space with norm $|\cdot|_X$. Set 
\[
\Phi(X):=\Phi (\R; \X ) :=\{ f:\R\rightarrow X \textrm{ strongly measurable}: \;\;  |f|_X\in \Phi\}
\]
and $\|f\|_{\Phi(X)} : = \||f|_X\|_{\Phi}$ for $f\in \Phi (X)$.

Moreover, we introduce a variant of vector-valued Besov and Triebel-Lizorkin spaces  corresponding to a Banach function space $\Phi$.  

 Let $\{\psi_j\}_{\N_0}$ be the resolution of the identity on $\R$ generated by a function $\psi\in \cD$ such that $\psi\equiv 1$ on $[-1,1]$ and $\supp \psi \subset [-2,2]$, i.e.  
\[
\psi_0 :=\psi, \qquad \psi_j := \psi(2^{-j} \cdot) - \psi(2^{-j+1} \cdot)\quad  \textrm{ for } j\in \N. 
\]
For $a\in \cO_M(\cL(X,Y)$, where $Y$ denotes another Banach space, we set
\[
a(D) f : = \cF^{-1}(\Theta(a, \cF f)) \quad \textrm{ for all $f$ in $\cS'(X)$,}
\] 
where $\Theta$ denotes a hypocontinuous, bilinear map from $O_M(\cL(X,Y))\times \cS'(X)$ into $\cS'(Y)$ (see the kernel theorem, e.g. \cite[Theorem 2.1]{Am97}). Here, $O_M(\cL(X,Y))$ stands for the space of slowly increasing smooth $\cL(X,Y)$-valued functions on $\R$. In particular, $a(D) \in \cL(\cS'(X), \cS'(Y))$ and for every $f\in \cS'(X)$ with $\cF f \in \cS'_r(X)$, $\Theta(a,\cF f) (t) = a(t) \cF f(t)$ $(t\in \R)$, and consequently $a(D) = \cF^{-1}(a(\cdot) \cF f)$.

Note that for every $f\in \cS'(X)$
\begin{equation}\label{resol on R}
\sum_{j\leq N} \phi_j(D)f \rightarrow f \qquad \textrm{in $\cS'(X)$ as $N\rightarrow \infty$}.
\end{equation}

Let $\Phi$ be a Banach function space. For all $s\in \R$ and $q\in [1,\infty]$ we set (with usual modification when $q=\infty$):
\[
B^{s,q}_{\Phi}(X):= \left\{f\in \cS'(X):\quad  \|f\|_{B^{s,q}_{\Phi}(X)} :=\bigl( \sum_{j=0}^\infty  \| 2^{sj} \psi_j(D) f \|^q_{\Phi(X)} \bigr)^{1/q}<\infty\right\}, \textrm{ and }
\]
\[ 
F^{s,q}_{\Phi}(X) := \left\{ f\in \cS'(X): \quad  \|f\|_{F^{s,q}_{\Phi}(X)} :=\bigl\|  \bigl( \sum_{j=0}^\infty  | 2^{sj} \psi_j(D) f(\cdot)|^q_X \bigr)^{1/q}  \bigr\|_\Phi < \infty \right\}.
\]

In the case when $\Phi = L^p$ with $p\in [1,\infty]$ we get the classical vector-valued Besov and Triebel-Lizorkin spaces. For a coherent treatment in this vector setting see, e.g. \cite[Chapter VII]{Am95}, where historical remarks are included (see also Triebel \cite{Tr83} for the scalar case). Recall that, if $X$ is a Hilbert space, then $F^{m,2}_{L^p}(X) = W^{m, p}(X)$ for $m\in \N$ and $F^{s,2}_{L^p}(X) = H^{s, p}(X)$, $s\in \R$.  The case when $\Phi = L^p_w$ with $w$ in the Muckenhoupt class $A_\infty$ was recently considered, e.g. in \cite{MeVe12, MeVe15}; see also the references given therein. Here and in the sequel, by $L^p_w$ we denote the weighted Lebesgue space $L^p(\R, w\ud t; \C)$.

For general Banach function spaces $\Phi$ we need only basic properties of the corresponding spaces $B^{s,q}_\Phi(X)$ and $F^{s,q}_\Phi(X)$; see Lemmas \ref{completness} and \ref{lem equiv norms}. To keep the presentation of the main result of this paper transparent, we will not study these spaces on their own rights here.

\section{The extrapolation results}\label{sec extension}

The main results of this section, Theorems \ref{RdeF} and \ref{CZ bd}, set up a convenient framework for the study of boundedness properties of {\it distributional} Fourier multipliers discussed in the next section.

\subsection{The iteration algorithm}
We start with an adaptation of the Rubio de Francia iteration algorithm, which provides a crucial tool to resolve the consistency issue for solution operators, which was mentioned in Section \ref{intro}; see also Subsection~\ref{int represent}.

\begin{theorem}\label{RdeF} Let $\Phi$ be a Banach function space over $(\R, \ud t)$ such that the Hardy-Littlewood operator $M$ is bounded on $\Phi$ and its dual $\Phi'$. 
\begin{itemize}
\item [(i)] Let $X$ be a Banach space. Then, for every $p\in (1,\infty)$
\[
\Phi(X) \subset \bigcup_{w\in A_p}L^p_w(X)
\]
\item [(ii)] Let $X$ and $Y$ be Banach spaces and $p\in (1,\infty)$. Assume that $\{ T_j \}_{j\in J}$ is a family of linear operators  $T_j: \cS(X) \rightarrow\cS'(Y)$ such that for every  $\cW \subset A_p(\R)$ with $\sup_{w\in \cW} [w]_{A_p} < \infty$
\[
\sup_{w\in \cW} \sup_j \| T_j \|_{\cL(L^p_w(X),L^p_w(Y))} < \infty. 
\]
Then, each $T_j$ extends to a linear operator $\cT_j$ on  $ \bigcup_{w\in A_p} L^p_w(X) \subset L^1_{loc}(X)$ and has the restriction to an operator in $\cL(\Phi(X), \Phi(Y))$. Moreover, 
\[
\sup_{j\in J} \| \cT_j\|_{\cL(\Phi(X), \Phi(Y))} <\infty.
\] 
\end{itemize}
\end{theorem}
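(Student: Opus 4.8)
The engine of the proof is the Rubio de Francia iteration operator, applied on both $\Phi$ and $\Phi'$. The plan is to first record its properties. Writing $c:=\|M\|_{\cL(\Phi)}$ and $c':=\|M\|_{\cL(\Phi')}$, I would define, for $0\le g\in\Phi$,
\[
Rg:=\sum_{k=0}^\infty \frac{M^k g}{(2c)^k},
\]
and define $R'$ on $\Phi'$ by the same formula with $c$ replaced by $c'$. The three properties I will use are immediate from the geometric series and the subadditivity of $M$: $g\le Rg$ pointwise, $\|Rg\|_\Phi\le 2\|g\|_\Phi$, and $M(Rg)\le 2c\,Rg$, so that $Rg\in A_1$ with $[Rg]_{A_1}\le 2c$; symmetrically for $R'$ with constant $2c'$. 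The point of these operators is that they produce $A_1$ majorants with norm control and a uniform $A_1$ constant.

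For (i), given $f\in\Phi(X)$ I set $F:=|f|_X\in\Phi$, fix any $0\le h\in\Phi'$ with $\|h\|_{\Phi'}=1$, and form the weight
\[
w:=(RF)^{1-p}\,(R'h).
\]
By Jones factorization a product $u_1\,u_0^{1-p}$ of powers of $A_1$ weights lies in $A_p$, so $w\in A_p$ with $[w]_{A_p}\le (2c')(2c)^{p-1}=:C_0$, a bound \emph{independent of $f$ and $h$}. Since $F\le RF$ and $1-p<0$ one has $F^{p}(RF)^{1-p}\le F$, whence, by H\"older for the $\Phi$–$\Phi'$ pairing, $\int_\R F^p\,w\le\int_\R F\,(R'h)\le \|F\|_\Phi\|R'h\|_{\Phi'}\le 2\|F\|_\Phi<\infty$. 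This shows $f\in L^p_w(X)$, giving the inclusion; the ambient inclusion $\bigcup_w L^p_w(X)\subset L^1_{loc}(X)$ follows from H\"older together with $w^{1-p'}\in L^1_{loc}$, valid since $w\in A_p$.

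For (ii) I would first construct the extension. As $\cS(X)$ is dense in each $L^p_w(X)$ and the hypothesis yields $K:=\sup_{[w]_{A_p}\le C_0}\sup_j\|T_j\|_{\cL(L^p_w(X),L^p_w(Y))}<\infty$ (apply it to $\cW=\{w:[w]_{A_p}\le C_0\}$), each $T_j$ extends by density to an operator in $\cL(L^p_w(X),L^p_w(Y))$. The delicate point is consistency across weights: if $f\in L^p_{w_1}(X)\cap L^p_{w_2}(X)$, the two extensions must agree as elements of $L^1_{loc}(Y)$. I would resolve this by choosing one sequence $\phi_n\in\cS(X)$ converging to $f$ simultaneously in $L^p_{w_1}(X)$ and $L^p_{w_2}(X)$ (a truncate-and-mollify construction, using boundedness of $M$ to control both weighted norms at once); since convergence in any $L^p_w(X)$ forces convergence in $L^1_{loc}(X)$ via H\"older and $w^{1-p'}\in L^1_{loc}$, the two $L^p_{w_i}(Y)$-limits of $T_j\phi_n$ both coincide with the common $L^1_{loc}(Y)$-limit, hence with each other. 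This produces a well-defined $\cT_j$ on $\bigcup_{w\in A_p}L^p_w(X)$, with weighted norm bound $K$ preserved.

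It then remains to prove the $\Phi$-bound uniformly in $j$. For $f\in\Phi(X)$ I set $G:=|\cT_j f|_Y$ and $F:=|f|_X$, and use $(\Phi')'=\Phi$ (Lorentz–Luxemburg, as in Section~\ref{spaces}) to write $\|\cT_j f\|_{\Phi(Y)}=\sup\{\int_\R G\,h:\ 0\le h\in\Phi',\ \|h\|_{\Phi'}\le1\}$. Fixing such an $h$ and the \emph{same} weight $w=(RF)^{1-p}(R'h)$ as above, H\"older and the identity $(1-p)(1-p')=1$ give
\[
\int_\R G\,h\le\Bigl(\int_\R G^p\,w\Bigr)^{1/p}\Bigl(\int_\R h^{p'}w^{1-p'}\Bigr)^{1/p'},
\]
where $\int_\R h^{p'}w^{1-p'}\le\int_\R h\,(RF)\le 2\|F\|_\Phi$ (using $h^{p'}(R'h)^{1-p'}\le h$), and the first factor is controlled by the weighted bound $\int_\R G^p w=\|\cT_j f\|_{L^p_w(Y)}^p\le K^p\int_\R F^p w\le 2K^p\|F\|_\Phi$. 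Combining these and using $\tfrac1p+\tfrac1{p'}=1$ gives $\int_\R G\,h\le 2K\|F\|_\Phi$, and taking the supremum over $h$ yields $\|\cT_j f\|_{\Phi(Y)}\le 2K\|f\|_{\Phi(X)}$ with $K$ independent of $j$. The main obstacle is not this duality–H\"older estimate, which is routine once the weight $(RF)^{1-p}(R'h)$ is in hand, but rather the well-definedness of $\cT_j$ on the union $\bigcup_w L^p_w(X)$: it is precisely the consistency of the weight-by-weight density extensions, handled by simultaneous approximation in two weighted norms and passage to the common $L^1_{loc}$-limit.
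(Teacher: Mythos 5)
Your proposal is correct and follows essentially the same route as the paper: the Rubio de Francia iteration operators on $\Phi$ and $\Phi'$, the weight $w=(\cR|f|_X)^{1-p}\,\cR'h$ with the uniform $A_p$-bound, density of $\cS(X)$ in each $L^p_w(X)$ with simultaneous approximation to settle consistency across weights, and the Lorentz--Luxemburg duality $(\Phi')'=\Phi$ to pass from the bound on $\int_\R |\cT_j f|_Y\,|h|\,\ud t$ back to the $\Phi$-norm. Your packaging of the key estimate as duality plus weighted H\"older (using $(1-p)(1-p')=1$) is the same computation the paper writes as a single chain of inequalities, so there is no substantive difference.
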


\begin{proof} Let $\cR$ and $\cR'$ denote the following sublinear, positive operators defined on $\Phi$ and $\Phi'$, respectively: 

\begin{equation}\label{algo ops}
\cR g := \sum_{k=0}^\infty \frac{M^k |g|}{(2\|M\|_{\Phi})^k} \quad (g\in \Phi) \qquad 
\cR' h := \sum_{k=0}^\infty \frac{M^k |h|}{(2\|M\|_{\Phi'})^k} \quad (h\in \Phi')  
\end{equation}

Here, $M^k$ stands for the $k$-th iteration of the Hardy-Littlewood operator $M$, $M^0 := I$, and $\|M\|_\Phi := \sup_{\|g\|_\Phi \leq 1}\|Mg\|_{\Phi}$. 
Since 
\[
M(\cR g ) \leq 2 \|M\|_\Phi \cR g  \quad (g\in \Phi) \quad \textrm{and} \qquad M(\cR' h) \leq 2 \|M\|_{\Phi'} \cR h \quad (h\in \Phi'),
\]
for every $h\in \Phi$ and $g\in \Phi'$ the functions $\cR g$ and $\cR'h$ belong to Muckenhoupt's class $A_1:=A_1(\R)$. Consequently, for every $p\in (1,\infty)$, $g\in \Phi$, $g\neq 0$, and $h\in \Phi'$, $h\neq 0$,  the function 
\begin{equation}\label{weight}
w_{g,h,p}:=(\cR g)^{1-p} \cR' h
\end{equation}
is in Muckenhoupt's class $A_p$ with the $A_p$-constant 
\begin{equation}\label{constants}
[w_{g,h,p}]_{A_p}\leq 2^p \|M\|_{\Phi}^{p-1} \|M\|_{\Phi'}.  
\end{equation}
For $(i)$, note that if $f\in \Phi(X)$, then $f\in L^p_w(X)$ for $w:= (\cR|f|_X)^{1-p} \cR'h$ with an arbitrary $h\in \Phi'$, $h\neq 0$. Indeed, since $|f|_X\leq \cR|f|_X$, we get 
\[
\int_\R |f|_X^p w \ud t \leq \int_\R \cR|f|_X \cR' h \ud t \leq \|\cR |f|_X\|_\Phi \| \cR'h\|_{\Phi'}<\infty.
\] 

For $(ii)$, first note that $\bigcup_{w\in A_p} L^p_w(X)$ is a subspace of $L^1_{loc}(X)$. Indeed, a direct computation shows that $\min(w,v) \in A_p$ if $w, v\in A_p$. Moreover, a standard approximation argument shows that for every $w\in A_p$ the Schwartz class $\cS(X)$ is a dense subset $L^p_w(X)$. Thus, each operator $T_j$ ($j\in J$) has an extension to an operator $T_{j,w}$ in $\cL(L^p_w(X), L^p_w(Y))$. Let $\cT_j:\bigcup_{w\in A_p} L^p_w(X)\rightarrow \bigcup_{w\in A_p} L^p_w(X)$ be given by
\[
\cT_j f := T_{j,w} f 
\]
for $w \in A_p$ such that $f\in L^p_w(X)$. These operators are well-defined and linear. 
Indeed, if $w, v\in A_p$ and $f\in L^p_w(X)\cap L^p_w(X)$, since $T_{j,w}$ is consistent with $T_{j,v}$  on $\cS(X)$ and there exists $(f_N)_{N\in \N}\subset \cS(X)$ such that $f_N\rightarrow f$ as $N\rightarrow \infty$ both in $L^p_w(X)$ and $L^p_v(X)$, $T_{j,w}f=T_{j,v}f$.  
By $(i)$, the operators $\cT_j$, $j\in J$,  are well-defined on $\Phi(X)$.

We show that $\cT_j$, $j\in J$, restrict to uniformly bounded operators in $\cL(\Phi(X), \Phi(Y))$.

Fix $f \in \Phi(X)$. Let $0\neq h\in \Phi'$ and 
$w:=w_{|f|_X, h, p}$ be given by \eqref{weight}. Then, since $|f|_X\leq \cR|f|_X$, 
\begin{align*}
\| \cT_j &\|_{\cL(L^p_w(X),L^p_w(Y))}  \|\cR\|_{\Phi} \|f\|_{\Phi(X)} \|\cR'\|_{\Phi'} \| h\|_{\Phi'}\\
& \geq  \| \cT_j\|_{\cL(L^p_w(X),L^p_w(Y))}  \|\cR |f|_X\|_\Phi \|\cR'h\|_{\Phi'} \\
& \geq \| \cT_j\|_{\cL(L^p_w(X),L^p_w(Y))}  \left( \int_\R \cR(|f|_X) \cR' h \, \ud t \right)^\frac{1}{p} \left(  \int_\R \cR(|f|_X) \cR' h \, \ud t \right)^\frac{1}{p'}\\
& \geq \| \cT_j\|_{\cL(L^p_w(X),L^p_w(Y))}  \left( \int_\R |f|_X^p \cR(|f|_X)^{1-p} \cR' h \, \ud t \right)^\frac{1}{p} \left(  \int_\R \cR(|f|_X) \cR' h \, \ud t \right)^\frac{1}{p'}\\
&\geq  \left( \int_\R |\cT_jf|_Y^p \cR(|f|_X)^{1-p} \cR' h \,\ud t \right)^\frac{1}{p} \left(  \int_\R \cR(|f|_X) \cR' h \ud t \right)^\frac{1}{p'}\\
&\geq \int_{\R} |\cT_jf|_Y \cR' h \, \ud t\\
&\geq \int_\R |\cT_jf|_Y |h| \, \ud t. 
\end{align*}
Our assumption on the dependence of the norms of $T_j$ and \eqref{constants} yield
$$
\mu: = \sup_{j\in J} \sup_{f\in \Phi(X), h\in \Phi'} \| \cT_j \|_{\cL(L^p_w(X),L^p_w(Y))} < \infty.
$$
Consequently, for every $h\in \Phi'$ and $f\in \Phi(X)$ we have that 
\[
\int_\R |\cT_jf|_Y |h| \ud t \leq \mu \|\cR\|_{\Phi} \|\cR'\|_{\Phi'}  \|f\|_{\Phi(X)} \|h\|_{\Phi'}. 
\]
Therefore, since $\Phi$ has Fatou's property, by the Lorentz-Luxemburg theorem $|\cT_jf|_Y\in \Phi''=\Phi$  and  $\|\cT_j\|_{\cL(\Phi(X), \Phi(Y))}\leq \mu \|\cR\|_{\Phi} \|\cR'\|_{\Phi'}$. This completes the proof. 

\end{proof}
 
\begin{remark}\label{rem RdF}
(a) The formulation of Theorem \ref{RdeF} corresponds to the idea of the proof of the Rubio de Francia extrapolation result, \cite[Theorem A]{RdeF87}, rather than to a modern framework provided in \cite{CrMaPe11}. However, in contrast to the proof given in \cite{RdeF87}, the above proof is constructive and adapts ideas presented in \cite[Chapter 4]{CrMaPe11}.

In a slightly less general form (see Corollary \ref{RdeF cor}), such modification of the Rubio de Francia extrapolation method was already applied to the study of the abstract Cauchy problems in \cite{ChKr17} and \cite{ChKr18}) (see also  \cite{Kr14}). The scalar variant of (i) (i.e. for $X=\C$) is also proved in \cite[Theorem 1.15]{KnMcCKa16} by a different argument than the one presented here.

(b)  Note that the point $(i)$ shows that if an operator $T$ is defined on $L^p_w(X)$ for every $w\in A_p$, then $Tf$ makes sense for every $f \in \Phi(X)$, where $\Phi$ satisfies the assumption of Theorem \ref{RdeF}. Therefore, no further extension procedure for such operator $T$ is needed. Cf. \cite[Section 5.4]{HaHa19}; see also Remark \ref{rem Lerner}(b) below.  
\end{remark}

The following result shows that Theorem \ref{RdeF} extends \cite[Theorem 4.10]{CrMaPe11}. 
We start with some preliminaries. 
A special class of Banach function spaces is the class of {\it symmetric spaces} (or {\it rearrangement invariant Banach function spaces}); see \cite{BeSh88}.
For a locally integrable weight $w$ on $\R$, we define $\|\cdot\|_{\E_w}:L^0_+ \rightarrow [0,\infty]$ by $\|f\|_{\E_w}: = \| f^*_w\|_{\E}$, where $f^*_w$ denotes the decreasing rearrangement of $f$ with respect to $w \ud t$. By \cite[Theorem 4.9, p. 61]{BeSh88}, the space 
\[
\E_w:= \{ f\in L^0: \|f\|_{\E_w} <\infty \}
\]
with $\| \cdot\|_{\E_w}$ is a symmetric space over $(\R, w \ud t)$. Moreover, $\E_w$ is a Banach function space over $(\R, \ud t)$ in the sense of Section \ref{spaces}.

For the convenience, we define the Boyd indices $p_\E$ and $q_\E$ of a symmetric space $\E$ following \cite{LiTz79} (in \cite{BeSh88} the Boyd indices are defined as the reciprocals with respect to \cite{LiTz79}). 

\begin{proposition}\label{RdeF cor} Let $\E$ be a symmetric space over $(\R,\ud t)$ with nontrivial Boyd indices $p_\E, q_\E \in (1,\infty)$. 
Then, for every Muckenhoupt weight $w\in A_{p_\E}$, the Hardy-Littlewood operator $M$ is bounded on $\E_w$ and its dual $(\E_w)'$ with respect to $(\R, \ud t)$. 
\end{proposition}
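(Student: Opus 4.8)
The plan is to connect the Muckenhoupt theory for the symmetric space $\E_w$ back to classical weighted bounds for $M$ on Lebesgue spaces, using the Boyd indices to interpolate. The key structural fact is that a symmetric space $\E$ with Boyd indices $p_\E, q_\E \in (1,\infty)$ is, by the Boyd interpolation theorem, an interpolation space between $L^{p_0}$ and $L^{p_1}$ for any $1 < p_0 < p_\E \leq q_\E < p_1 < \infty$. The Hardy–Littlewood operator $M$ is the prototype of an operator to which Boyd interpolation applies, so boundedness of $M$ on the endpoint weighted Lebesgue spaces should transfer to $\E_w$.

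First I would fix $w \in A_{p_\E}$ and choose exponents $p_0, p_1$ with $1 < p_0 < p_\E \leq q_\E < p_1 < \infty$. Since $A_{p_0} \subset A_{p_\E} \subset A_{p_1}$ (the Muckenhoupt classes are nested in the exponent), the weight $w$ lies in all three classes, so by the classical Muckenhoupt theorem $M$ is bounded on $L^{p_0}(\R, w\,\ud t)$ and on $L^{p_1}(\R, w\,\ud t)$. The subtlety here is that the Boyd indices govern interpolation on $(\R, \ud t)$, so I would want to realize $\E_w$ as a symmetric space over the measure space $(\R, w\,\ud t)$ — which is exactly how it was defined via $\|f\|_{\E_w} = \|f^*_w\|_\E$, using the decreasing rearrangement with respect to $w\,\ud t$. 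The decreasing rearrangement is measure-preserving, so the Boyd indices of $\E_w$ as a symmetric space over $(\R, w\,\ud t)$ coincide with those of $\E$ over $(\R, \ud t)$. Then Boyd's interpolation theorem applied on $(\R, w\,\ud t)$ gives boundedness of $M$ on $\E_w$, provided I have verified that $M$ is of appropriate (weak or strong) type at the two endpoints with respect to $w\,\ud t$.

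For the dual, I would invoke the duality formula for Boyd indices of a symmetric space: the lower Boyd index of $\E'$ is the conjugate of the upper Boyd index of $\E$, i.e. $p_{\E'} = q_\E'$ and $q_{\E'} = p_\E'$, so $p_{\E'}, q_{\E'} \in (1,\infty)$ as well. The remaining point is to relate the Köthe dual $(\E_w)'$ taken with respect to $(\R, \ud t)$ — which is what the statement requires — to the dual taken with respect to $w\,\ud t$. A change-of-density computation shows that $f \mapsto f/w$ (or $fw$) furnishes the identification, and one checks $w \in A_{p_\E}$ is equivalent to $w^{1-p_\E'} \in A_{p_{\E'}}$ (the standard $A_p$ duality of weights), which is precisely the condition needed to run the same Boyd interpolation argument for $(\E_w)'$.

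The main obstacle I anticipate is the bookkeeping in these two identifications of measure: making precise that the Boyd indices are unchanged when passing from $(\R, \ud t)$ to $(\R, w\,\ud t)$ via rearrangement, and correctly matching the Köthe dual over $\ud t$ with the relevant space over $w\,\ud t$ through the weight $w^{1-p_\E'}$. Both rest on standard facts (measure-preservation of rearrangement, $A_p$-weight duality, and the Boyd duality relation for indices), so the argument is essentially an orchestration of \cite{BeSh88} and \cite{LiTz79} rather than anything genuinely new — but it is exactly where a careless identification of the ambient measure would silently break the proof.
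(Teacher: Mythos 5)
Your first half (boundedness of $M$ on $\E_w$ itself) is essentially sound and in fact reconstructs the mechanism behind the citation the paper uses there (\cite[Lemma 4.12]{CrMaPe11}): realize $\E_w$ as a symmetric space over $(\R, w\,\ud t)$ with the same Boyd indices, check Lebesgue endpoint bounds, and apply Calder\'on/Boyd. But one justification in it is wrong as written: you claim $w\in A_{p_0}$ for \emph{any} $p_0\in(1,p_\E)$ "since the Muckenhoupt classes are nested". Nesting gives $A_{p_0}\subset A_{p_\E}$, i.e.\ it goes in the wrong direction for the lower endpoint; what you need is the self-improvement (reverse/inverse H\"older) property of $A_p$ weights, which produces only \emph{some} $p_0<p_\E$ with $w\in A_{p_0}$. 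This is exactly the paper's opening step ("by the inverse H\"older inequality there exist $p,q$ with $1<p<p_\E\leq q_\E<q<\infty$ such that $w\in A_p\subset A_q$"). This slip is easily repaired.

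The dual half, however, contains a genuine gap. You propose to match $(\E_w)'$ (K\"othe dual with respect to $\ud t$) with a rearrangement space over the measure $w^{1-p_\E'}\,\ud t$ and rerun the first-half argument using $w\in A_{p_\E}\iff w^{1-p_\E'}\in A_{p_\E'}$. The identity $(L^p_w)'=L^{p'}_{w^{1-p'}}$ is special to the Lebesgue scale, where the weight can be absorbed into the $p'$-th power of the norm; for a general symmetric space $\E$ the distribution function of $f$ with respect to $w^{1-p_\E'}\,\ud t$ has nothing to do with the distribution function of $f w^{-1}$ with respect to $w\,\ud t$, so $(\E_w)'\neq (\E')_{w^{1-p_\E'}}$ in general — indeed no single exponent $p_\E$ can enter the duality of the \emph{space}. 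The correct identification, which the paper establishes via resonance of $(\R, w\,\ud t)$, \cite[Proposition 4.2]{BeSh88} and the Luxemburg representation theorem, is $\|f\|_{(\E_w)'}=\|fw^{-1}\|_{(\E')_w}$, i.e.\ $(\E_w)'=w\cdot(\E')_w$. Consequently, boundedness of $M$ on $(\E_w)'$ is equivalent not to a bound for $M$ on some rearrangement space, but to a bound for the \emph{conjugated} operator $S(g):=M(gw)/w$ on $(\E')_w$; the weight duality you invoke enters only to verify the Lebesgue endpoint bounds for $S$ on $L^{p'}_w$ and $L^{q'}_w$ (at the two exponents $p<p_\E\leq q_\E<q$ supplied by reverse H\"older), after which Calder\'on's joint weak type theorem and Boyd's theorem, together with $p_{\E'}=q_\E'$ and $q_{\E'}=p_\E'$, conclude. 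Without introducing $S$, your Boyd interpolation in the dual half has no valid space to act on, and this is precisely the step where, as you yourself feared, "a careless identification of the ambient measure silently breaks the proof."
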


\begin{proof} By \cite[Lemma 4.12]{CrMaPe11}, $M$ is bounded on $\Phi = \E_w$.  By the inverse H\"older inequality, there exist $p$ and $q$ with $1<p<p_{\E}\leq q_{\E} <q <\infty$ such that $w\in A_p\subset A_q$. Therefore, by Muckenhoupt's theorem, $M$ is also bounded on $L^p_w$ and $L^q_w$.  
One can easily check that the operator $S(f) := M(f w)/w$ $(f\in L^1_{loc})$ is bounded on  
$ L^{p'}_{w}$  and $ L^{q'}_{w}$. Hence, it is of joint weak type $(q',q';p',p')$ with respect to $(\R, w\ud t)$; see \cite[Theorem 4.11, p. 223]{Ca66}. Consequently, 
since $p_{\E'} = q'_{\E}$ and $q_{\E'}= p'_\E$, we have that $1<q'<p_{\E'}\leq q_{\E'}< p'<\infty$.  Next, by Boyd's theorem \cite[Theorem 5.16, p.153]{BeSh88}, we get that $S$ is bounded on $(\E')_w$. Moreover, since $(\R, w\ud t)$ is a resonant space (see \cite[Theorem 1.6, p.51]{BeSh88}, we have  $(\E')_w = {(\E_w)'}^w$ with equal norms, where ${(\E_w)'}^w$ denotes the dual space of $\E_w$ with respect to $(\R, w\ud t)$. Indeed, the inclusion $(\E')_w \subset {(\E_w)'}^{w}$ follows from \cite[Proposition 4.2, p. 59]{BeSh88} and the reverse one from the Luxemburg representation theorem \cite[Theorem 4.10, p. 62]{BeSh88}.

Finally, note that for every $f\in (\E_w)'$  
\begin{align*}
\|Mf\|_{(\E_w)'} & = \|(Mf)w^{-1}\|_{{(\E_w)'}^w} = \| S(fw^{-1})\|_{(\E')_w}\\
& \leq \|S\|\|fw^{-1}\|_{{(\E')_w}} \\
& \leq \|S\| \|f\|_{{(\E_w)'}}, 
\end{align*}
where $\|S\|$ denotes the norm of $S$ on $(\E')_w$. This completes the proof. 
\end{proof}

\subsection{The singular integral operators}
To proceed, we specify a class of operators for which Theorem \ref{RdeF} applies. In particular, it allows us to derive the basic properties of generalized Besov and Triebel-Lizorkin spaces introduced in Section \ref{spaces}, which are involved in the proofs of the subsequent results. 

We say that a bounded linear operator $T$ in $\cL(L^p(X), L^p(Y))$ for some $p\in (1,\infty)$ is a {\it singular integral operator} if there exists a function $k\in L^1_{loc}(\dot \R; \cL(X,Y))\cap \cS'(\cL(X,Y))$ such that for all $f\in L^\infty_c(X)$ and all $t\notin \supp f$
\[
Tf(t) = \int_\R k(s) f(t-s) \ud s.
\]
Here, $L^\infty_c(X)$ stands for the space of all $X$-valued, essentially bounded, measurable functions with compact support in $\R$. 
If, additionally, $k \in \cC^1(\dot \R; \cL(X,Y))$ and 
\begin{equation}\label{con K}
[k]_{\cK_{1}} := \max_{l=0,1} \sup_{t\neq 0} \|t^{l+1} k^{(l)}(t)\|_{\cL(X,Y)}<\infty,
\end{equation}
then we say that $T$ is a Calder\'{o}n-Zygmund operator.  
The boundedness of Calder\'{o}n-Zygmund operators on different types of vector-valued Banach function spaces attracted attention in the literature. Here, we mention two pioneering papers \cite{BeCaPa62}, and \cite{RuRuTo83}, from which some ideas we reproduce below.

For instance, by direct analysis of the proof of \cite[Theorem 1.6, Part I]{RuRuTo83} each Calder\'on-Zygmund operator satisfies the assumptions of Theorem \ref{RdeF}(ii) (it is also true for a larger class of singular integral operators; see \cite[Theorem 7]{ChKr17}). The following result makes this statement precise.

\begin{corollary}\label{CZ bd} Let $X$ and $Y$ be Banach spaces. 
Let $\{T_j\}_{j\in J}$ be a family of Calder\'on-Zygmund operators  with kernels $k_j$, $j\in J$, such that 
\begin{equation}\label{ub for CZ}
\sup_{j\in J}\|T_j\|_{\cL(L^p(X), L^p(Y))} < \infty \quad \textrm{and} \quad \sup_{j\in J}[k_j]_{\cK_1}<\infty.
\end{equation} 
Then, $\{T_j\}_{j\in J}$ satisfies the assumptions of Theorem \ref{RdeF}$(ii)$, that is, 
for every $q\in (1,\infty)$ and 
for every $\cW\subset A_{q}$ with $\sup_{w \in \cW} [w]_{A_q}<\infty$ 
\[
 \sup_{j\in J, w\in \cW }\| T_j\|_{\cL(L^q_w(X), L^q_w(Y)}<\infty.
\]
Consequently, the conclusion of Theorem \ref{RdeF}$(ii)$ holds for $\{T_j\}_{j\in J}$. 

In particular, if $\{\psi_j\}_{j\in \N_0}$ is a resolution of the identity on $\R$ and $\Phi$ satisfies the assumption of Theorem \ref{RdeF}, then the operators $\psi_j(D)$, $j\in \N_0$, restrict to uniformly bounded operators in $\cL(\Phi(X))$. 
\end{corollary}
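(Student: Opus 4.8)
The plan is to separate the genuine analytic content of Corollary \ref{CZ bd} — a \emph{uniform} weighted Calder\'on--Zygmund estimate — from the two assertions that follow from it formally. Granting the displayed bound $\sup_{j,\,w\in\cW}\|T_j\|_{\cL(L^q_w(X),L^q_w(Y))}<\infty$, the sentence ``the conclusion of Theorem \ref{RdeF}(ii) holds'' is immediate, since that bound is exactly the hypothesis of Theorem \ref{RdeF}(ii). Likewise, the final claim about $\{\psi_j(D)\}$ will reduce to checking the two conditions in \eqref{ub for CZ}. So the work concentrates on the weighted estimate and, separately, on verifying \eqref{ub for CZ} for the Littlewood--Paley family.

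For the weighted estimate I would first note that the kernel condition \eqref{con K} controls the H\"ormander regularity uniformly in $j$: for $|s|\le |t|/2$ the mean value theorem and the $l=1$ part of $[k_j]_{\cK_{1}}$ give $|k_j(t-s)-k_j(t)|_{\cL(X,Y)}\lesssim [k_j]_{\cK_{1}}\,|s|\,|t|^{-2}$, whence $\int_{|t|\ge 2|s|}|k_j(t-s)-k_j(t)|_{\cL(X,Y)}\,\ud t\lesssim [k_j]_{\cK_{1}}$ with an absolute implied constant. Together with the uniform $L^p$ bound, the Calder\'on--Zygmund decomposition — carried out with $|\cdot|_X$ in place of the scalar modulus, which is all the argument uses — shows that each $T_j$ is of weak type $(1,1)$ from $L^1(X)$ to $L^{1,\infty}(Y)$ with a constant depending only on $p$ and on the two suprema in \eqref{ub for CZ}. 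By Marcinkiewicz interpolation and duality the $T_j$ are then uniformly bounded on $L^r$ for every $r\in(1,\infty)$.

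To pass to weights I would run the Coifman--Fefferman mechanism quantitatively. The uniform H\"ormander bound yields the pointwise sharp-maximal estimate $M^{\#}(|T_j f|_Y)(x)\le C\,M_s(|f|_X)(x)$ for every $s>1$ and every $f\in L^\infty_c(X)$, with $C$ depending only on the constants in \eqref{ub for CZ} (and on $s$). Given $w\in A_q$, the openness of the Muckenhoupt classes lets me choose $s>1$, with $s$ governed by $[w]_{A_q}$, so that $w\in A_{q/s}$; then the weighted Fefferman--Stein inequality $\|g\|_{L^q_w}\le C([w]_{A_\infty})\|M^{\#}g\|_{L^q_w}$, combined with the boundedness of $M_s$ on $L^q_w$ (whose norm is controlled by $[w]_{A_{q/s}}$, hence by $[w]_{A_q}$), gives $\|T_j f\|_{L^q_w(Y)}\le C(q,[w]_{A_q})\|f\|_{L^q_w(X)}$ first on the dense class $L^\infty_c(X)$ and then on all of $L^q_w(X)$. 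The decisive point, and the main obstacle, is that every constant here depends on $w$ only through $[w]_{A_q}$; this uniform tracking is why one cannot merely quote off-the-shelf weighted boundedness but must follow the proof of \cite[Theorem 1.6, Part I]{RuRuTo83} in a quantitative form. Taking suprema over an $A_q$-bounded family $\cW$ and over $j$ then keeps the bound finite, which is the assertion to be proved.

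Finally, for the family $\{\psi_j(D)\}$ I would verify \eqref{ub for CZ} with $Y=X$. Each $\psi_j(D)$ is convolution with the scalar Schwartz function $\cF^{-1}\psi_j$, so by Young's inequality it is bounded on $L^p(X)$ for every $p$ with norm at most $\|\cF^{-1}\psi_j\|_{L^1}$. Since $\psi_j=\psi_1(2^{-(j-1)}\,\cdot)$ for $j\ge1$, the kernels satisfy $\cF^{-1}\psi_j(t)=2^{\,j-1}(\cF^{-1}\psi_1)(2^{\,j-1}t)$, i.e. they are the $L^1$-normalised dilates $k\mapsto\la k(\la\,\cdot)$ of $\cF^{-1}\psi_1$ with $\la=2^{\,j-1}$; hence $\|\cF^{-1}\psi_j\|_{L^1}=\|\cF^{-1}\psi_1\|_{L^1}$ is independent of $j$, and the same dilation invariance of the seminorm in \eqref{con K} gives $[\cF^{-1}\psi_j]_{\cK_{1}}=[\cF^{-1}\psi_1]_{\cK_{1}}$ for $j\ge1$. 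Adjoining the single index $j=0$ leaves both suprema finite. As the kernels $\cF^{-1}\psi_j$ are smooth on $\dot\R$ and represent $\psi_j(D)$ by convolution, each $\psi_j(D)$ is a Calder\'on--Zygmund operator satisfying \eqref{ub for CZ}, so the first part of the corollary applies and yields the uniform boundedness of $\{\psi_j(D)\}$ in $\cL(\Phi(X))$.
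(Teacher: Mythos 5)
Your proposal is correct and follows essentially the same route as the paper: the paper's proof of the uniform weighted bound consists precisely of citing the constant-tracking in the proof of \cite[Theorem 1.6, Part I]{RuRuTo83} (with \cite{Hy12}, \cite{Le13} for sharp $[w]_{A_q}$-dependence), and your H\"ormander-condition/weak $(1,1)$/sharp-maximal-function argument with quantitative control via reverse H\"older and the weighted Fefferman--Stein inequality is exactly the machinery inside that citation, spelled out. Your treatment of $\{\psi_j(D)\}$ — the dilation identity $\psi_j=\psi_1(2^{-(j-1)}\,\cdot)$, the dilation invariance of $\|\cdot\|_{L^1}$ and of $[\cdot]_{\cK_1}$, and Young's inequality — coincides with the paper's argument (and in fact states the dilation identity more cleanly than the paper's displayed formula, which has a typo in the exponents).
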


\begin{proof}
By direct analysis of the constants involved in the main ingredients of the proof of \cite[Theorem 1.6, Part I]{RuRuTo83}, the norms $\|T\|_{\cL(L^q_w(X),L^q_w(Y))}$ of each Calder\'{o}n-Zygmund operator $T$, depend on $q\in (1,\infty)$, the constant from the $\cK_1$-condition of its kernel, and the norm of $T$ as an operator in $\cL(L^p(X), L^p(Y))$, and finally they are bounded from above if $w$ varies in a subset of $A_q$ on which $[w]_{A_q}$ are uniformly bounded. Cf. also \cite{Hy12}, or \cite{Le13}, for the precise dependence of $L^q_w$-norms of Calder\'on-Zygmund operators on the $A_q$-constants $[w]_{A_q}$ of Muckenhoupt's weights $w\in A_q$.

For the last statement, since $\cF^{-1} \psi_j(t)=2^{-j}\cF^{-1}\psi (2^{-j}t) - 2^{-j+1}\cF^{-1}\psi (2^{-j+1}t)$ $(t\in \R, j\in \N_0)$, it is straightforward to see that  $\sup_{j\in \N_0}[\cF^{-1}\psi_j]_{\cK_1}<\infty$ and $\sup_{j\in \N_0}\|\cF^{-1}\psi_j\|_{L^1}< \infty$. 
Therefore, by Young's inequality, we get \eqref{ub for CZ} for $\psi_j(D)$, $j\in \N_0$. 
Alternatively and more directly, this statement can be also derived by the argument presented in the proof of Proposition \ref{bd multip}(i) below.
\end{proof}

\begin{remark}\label{rem Lerner}
(a)
In the scalar case, i.e. $X=Y=\C$, the boundedness of Calder\'on-Zygmund operators on Banach function spaces $\Phi$ such that $M$ is bounded on $\Phi$ and $\Phi'$ was already stated in \cite[Proposition 6]{Ru14}. The proof given in \cite{Ru14} is based on a formal application of Coifman's inequality. Theorem \ref{RdeF} allows us to provide an alternative, direct proof of this fact. 

(b) {\it The approach via Str\"omberg's sharp maximal operator.}
A conceptually different approach to the study of the boundedness of Calder\'on-Zygmund operators was presented by Jawerth and Torchinsky \cite{JaTo85}. It relies on the local sharp maximal function operator ($\alpha \in (0,1))$:
\[
M^\sharp_\alpha f(t):= \sup_{Q\ni t} \inf_{z\in \C} \inf\bigl\{ \beta \geq 0: |\{ s\in Q: |f(s) - z| > \beta\}| <\alpha |Q| \bigr\}
\] for every $f\in L^1_{loc}$ and $t\in \R$.

It is shown in \cite[Theorem 4.6]{JaTo85} (in the scalar case, but the vector one follows easily) that, in particular,  if $T$ is a Calder\'on-Zygmund operator,  then there exist  constants $\alpha$ and $\mu$ such that
\begin{equation}\label{JT est}
M^\sharp_\alpha (|Tf|_Y) \leq \mu M(|f|_X) 
\end{equation}
 for all {\it appropriate } $f\in L^1_{loc}(X)$. Here, the word {\it 'appropriate'} plays a crucial role to apply this inequality for a further study of the boundedness of $T$ on some function spaces. By an analysis of the proof, we see that a priori one can consider the set of all $f\in L^1_{loc}(X)$ such that  $Tf \in L^0(X)$ with $(|Tf|_Y)^*(+\infty) = 0$ and for every interval $I\subset \R$, $Tf^I\in L^1(I)$ with $f^I:=f\chi_{\R\setminus 2I}$, and 
\[
(Tf^I)(t) = \int_{\R \setminus 2I} k(t-s)f(s) \ud s \,\, \textrm{  for a.e. } t \in I. 
\]
Here, for a function $g\in L^0$, $g^*(+\infty) = 0$ if and only if $|\{ t\in \R: |g(t)|>\alpha \}|<\infty$ for all $\alpha>0$. 
Denote the above set by $D_{T}$ and set $D_{T,\Phi} :=D_{T}\cap \Phi(X)$ for a Banach function space $\Phi \subset L^1_{loc}$. Of course we have that $L^1_c(X) \subset D(T)$.

Moreover, recall  Lerner's characterization of boundedness of $M$ on a $\Phi'$. Namely, in \cite[Corollary 4.2 and Lemma 3.2]{Le10} Lerner proved that if the maximal operator $M$ is bounded on a Banach function space $\Phi$, then $M$ is bounded on $\Phi '$ if and only if
there exists a constant $\mu$ such that for every $f\in L^0$ with $f^*(+\infty) = 0$  
\[
\|Mf\|_\Phi \leq \mu\|M^\sharp f\|_\Phi.
\]

Note that by the Jawerth-Torchinsky pointwise estimate \eqref{JT est} and Lerner's characterisation, we get for any Banach function space $\Phi$ such that $M$ is bounded on $\Phi$ and $\Phi'$ that for every $f\in D_{T, \Phi}$
\begin{equation}\label{bd est}
\|Tf\|_{\Phi(Y)} \leq \|M(|Tf|_Y)\|_{\Phi} \lesssim \|M^\sharp_{\alpha} (|Tf|_Y)\|_{\Phi} \lesssim \| M(|f|_X)\|_\Phi\lesssim \|f\|_{\Phi(X)}.
\end{equation}
But it is not clear how to show directly (without the use of the Rubio de Francia algorithm) that $D_{T,\Phi}$ is dense in $\Phi$.

Again, Theorem \ref{RdeF}(i) solves this density issue (and, in particular, relaxes a density assumption in \cite[Proposition 9]{Ru14}). 

In fact, let $f\in \Phi(X)$ and set $f_N = f\chi_{[-N,N]}$ for all $N\in \N$.
By Theorem \ref{RdeF},  $f\in L^2_w(X)$ and $T$ in  $\cL(L^2_w(X), L^2_w(Y))$ for some weight $w\in A_2$. Then, since $f_N\rightarrow f$ in $L^2_w(X)$, we have for some sequence $(N_l)_l\subset \N$ that $Tf_{N_l}(t) \rightarrow Tf(t)$ for a.e. $t\in \R$. Moreover, $f_{N_l} \in \Phi\cap L^1_c(X)$, so by \eqref{bd est},
\[
\|Tf_{N_l}\|_{\Phi(Y)} \lesssim \|f_{N_l}\|_{\Phi(X)}\lesssim \|f\|_{\Phi(X)}.
\]
Therefore, the Fatou property of $\Phi$ implies that $Tf\in \Phi(Y)$ and $\|Tf\|_{\Phi(Y)} \lesssim \|f\|_{\Phi(X)}$. 

\end{remark}

\subsection{The basic properties of generalised Besov and Triebel-Lizorkin spaces}

We start with an auxiliary lemma on dense subsets of $B^{s,q}_\Phi(X)$  and $F^{s,q}_\Phi(X)$.

\begin{lemma}\label{completness}
Let $X$ be a Banach space and let $\Phi$ be a Banach function space over $(\R, \ud t)$ such that the Hardy-Littlewood operator $M$ is bounded on $\Phi$ and its dual $\Phi'$.

Then, for every 
\[
\E \in \left\{ B^{s,q}_\Phi,\, F^{s,r}_\Phi: s\in \R, q\in [1,\infty], r \in (1,\infty) \right\}
\]
\begin{equation}\label{embedding}
\cS(X)\subset \E(X) \hookrightarrow \cS'(X),
\end{equation}
that is, $\E(X)$ embeds continuously into $\cS'(X)$. In particular, $\E(X)$ is a Banach space.

Furthermore, the set $\Phi(X) \cap \E(X)$ is dense in $\E(X)$ for each $\E$ with $q<\infty$, and in the other cases when $\E=B^{s,\infty}_\Phi$ $(s\in \R)$,  $\Phi(X) \cap B^{s,\infty}_\Phi(X)$ is $\sigma(B^{s,\infty}_\Phi(X), B^{-s,1}_{\Phi'}(X^*))$-dense in $B^{s,\infty}_\Phi(X)$. More precisely, for every $f\in \E(X)$  we have that 
\[ \sum_{0\leq j\leq N} \psi_j(D) f = \psi(2^{-N} D)f \rightarrow f  \quad \textrm{ as } N\rightarrow \infty
\]  
in $\E(X)$ for each $\E$ with $q<\infty$, and for $\E = B^{s,\infty}_\Phi$, $s\in \R$, this convergence holds in $\sigma(B^{s,\infty}_\Phi(X), B^{-s,1}_{\Phi'}(X^*))$-topology.
\end{lemma}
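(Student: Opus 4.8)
The plan is to establish the three assertions in order: the continuous embeddings \eqref{embedding}, the completeness, and finally the density/convergence statement, which is the heart of the lemma.

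First I would prove the inclusion $\cS(X)\subset \E(X)$ and the continuous embedding $\E(X)\hookrightarrow \cS'(X)$. For the first inclusion, fix $f\in\cS(X)$; since $\psi_j(D)f=\cF^{-1}(\psi_j\,\cF f)$ and $\psi_j$ is supported in the dyadic annulus $\{2^{j-1}\leq |t|\leq 2^{j+1}\}$ (for $j\geq 1$), the rapid decay of $\cF f$ forces $\|2^{sj}\psi_j(D)f\|_{\Phi(X)}$ to decay faster than any power of $2^{-j}$; summing in $\ell^q$ (or taking the $\ell^q$-norm pointwise, then the $\Phi$-norm, in the Triebel--Lizorkin case) gives a finite $\E(X)$-norm, where I would use that $\Phi$ is locally contained in $L^1_{loc}$ and that multiplication by a Schwartz function is bounded on $\Phi$ via the boundedness of $M$. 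For the embedding $\E(X)\hookrightarrow\cS'(X)$, I would use the reconstruction identity \eqref{resol on R}, $f=\sum_j\psi_j(D)f$ in $\cS'(X)$, together with the fact that each $\psi_j(D)f$ lies in $\Phi(X)\subset L^1_{loc}(X)$ (here Corollary \ref{CZ bd} gives $\psi_j(D)\in\cL(\Phi(X))$ uniformly), and estimate the pairing $\langle f,\phi\rangle$ for a test function $\phi\in\cS(X^*)$ by distributing the sum and controlling $|\langle\psi_j(D)f,\phi\rangle|$ through a dual pairing $\Phi(X)\times\Phi'(X^*)$, using the Schwartz decay of $\phi$ against the factor $2^{-sj}$ to ensure summability.

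Completeness of $\E(X)$ then follows by a standard Cauchy-sequence argument: a Cauchy sequence in $\E(X)$ is, by the just-established embedding, Cauchy in $\cS'(X)$ and converges to some $f\in\cS'(X)$; since each frequency block $\psi_j(D)$ is continuous on $\cS'(X)$ and the sequences $\{2^{sj}\psi_j(D)f_n\}_n$ are Cauchy in $\Phi(X)$ (which is complete, being a Banach function space with Fatou's property), one identifies the $\Phi(X)$-limits with $2^{sj}\psi_j(D)f$ and applies Fatou's property of $\Phi$ to pass the $\ell^q$-sum (resp. the pointwise $\ell^q$-norm) to the limit, yielding $f\in\E(X)$ with $f_n\to f$ in norm.

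The main obstacle, and the technical core, is the density/convergence statement $\psi(2^{-N}D)f\to f$ in $\E(X)$ for $q<\infty$. The identity $\sum_{0\leq j\leq N}\psi_j(D)=\psi(2^{-N}\cdot)(D)=\psi(2^{-N}D)$ is immediate from the telescoping definition of the $\psi_j$. For convergence, I would write the $\E(X)$-norm of the tail $f-\psi(2^{-N}D)f=\sum_{j>N}\psi_j(D)f$: in the Besov case this is exactly $\bigl(\sum_{j>N}\|2^{sj}\psi_j(D)f\|_{\Phi(X)}^q\bigr)^{1/q}$, the tail of a convergent series, hence $\to 0$ when $q<\infty$; in the Triebel--Lizorkin case the tail is $\bigl\|(\sum_{j>N}|2^{sj}\psi_j(D)f|_X^q)^{1/q}\bigr\|_\Phi$, which tends to $0$ by dominated convergence in $\Phi$ — here I must justify that the dominating function $(\sum_{j\geq 0}|2^{sj}\psi_j(D)f|_X^q)^{1/q}$ lies in $\Phi$ (it does, being the $F^{s,q}_\Phi(X)$-density) and invoke that $\Phi$ has order-continuous norm on the relevant set, which is guaranteed once $q<\infty$ since the partial sums increase to the full sum and $\Phi$ has Fatou's property; the dominated-convergence step in a Banach function space is precisely where the restriction $r\in(1,\infty)$ (equivalently $q<\infty$) is used. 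For the endpoint $\E=B^{s,\infty}_\Phi$, genuine norm convergence fails, so instead I would test against $g\in B^{-s,1}_{\Phi'}(X^*)$ and show $\langle f-\psi(2^{-N}D)f,\,g\rangle\to 0$: expanding both sides in the resolution $\{\psi_j\}$, using the near-orthogonality $\psi_j(D)\psi_k(D)=0$ for $|j-k|\geq 2$, the pairing reduces to $\sum_{j>N}\langle\psi_j(D)f,\psi_j(D)g\rangle$ up to finitely-overlapping shifts, and this is bounded by $\sup_{j>N}\|2^{sj}\psi_j(D)f\|_{\Phi(X)}\cdot\sum_{j>N}\|2^{-sj}\psi_j(D)g\|_{\Phi'(X^*)}$; the first factor stays bounded ($f\in B^{s,\infty}_\Phi(X)$) while the second is the tail of the convergent $B^{-s,1}_{\Phi'}(X^*)$-series and vanishes as $N\to\infty$, giving the claimed weak-$*$-type convergence.
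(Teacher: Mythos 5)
Your overall architecture matches the paper's: the duality pairing for the embedding, a Cauchy-sequence-plus-Fatou argument for completeness, tail estimates for density when $q<\infty$, and the near-orthogonality pairing argument for the $\sigma(B^{s,\infty}_\Phi(X),B^{-s,1}_{\Phi'}(X^*))$-convergence, which is essentially identical to the paper's computation. However, two steps are genuinely flawed or missing. The most serious is the Triebel--Lizorkin density. You justify $\|f-\psi(2^{-N}D)f\|_{F^{s,q}_\Phi(X)}\to 0$ by ``dominated convergence in $\Phi$'', claiming the needed order continuity ``is guaranteed once $q<\infty$ since the partial sums increase to the full sum and $\Phi$ has Fatou's property''. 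That implication is false: Fatou's property and order continuity of the norm are independent properties, and the very spaces this paper is designed to capture --- e.g.\ $\Phi=L^{p,\infty}$, on which $M$ is bounded together with its K\"othe dual --- have Fatou's property but are not order continuous, so a sequence decreasing pointwise to $0$ under a majorant in $\Phi$ need not converge in norm. Moreover, your reduction to ``exactly the tail'' is not exact: $\psi_j(D)(f-f_N)$ equals $\psi_j(D)f$ only for $j\geq N+2$, while at the boundary one gets cross terms of the form $\psi_{j+r}(D)\psi_j(D)f$, and controlling the pointwise $\ell^q$-sum of these cross terms inside the $\Phi$-norm is precisely where the paper works: it runs the Rubio de Francia algorithm with the weight $w=\cR(G_Nf)^{1-q}\cR'h$ built from the tail square function $G_Nf$, uses the uniform $L^q_w(X)$-boundedness of the $\psi_j(D)$ inside the sum, and Lorentz--Luxemburg duality to obtain $\|\widetilde G_{N,r}f\|_\Phi\lesssim\|G_Nf\|_\Phi$ uniformly in $N$ and $r$. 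Your dominated-convergence shortcut replaces this mechanism with a principle that fails in the stated generality.

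The second gap is the embedding $\E(X)\hookrightarrow\cS'(X)$ when $s\leq 0$. H\"older's inequality gives
\[
|f(\phi)|_X\leq \|f\|_{B^{s,q}_\Phi(X)}\Bigl(\sum_{j\in\N_0}\|2^{-js}\chi_j(D)\breve\phi\|_{\Phi'}^{q'}\Bigr)^{1/q'},
\]
and for $s\leq 0$ the factor $2^{-js}=2^{j|s|}$ grows geometrically, so ``Schwartz decay of $\phi$'' does not by itself yield summability: in a general Banach function space there is no Plancherel-type mechanism converting frequency localization at scale $2^j$ into decay of $\|\chi_j(D)\breve\phi\|_{\Phi'}$ in $j$. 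The paper needs a dedicated device here: with $\rho(t)=(1+t^2)^{-\alpha}$, $\alpha>|s|+1$, it factors $\chi_j(D)\breve\phi$ through the operators $(2^{(|s|+1)j}\rho\chi_j)(D)$, shows these are uniformly bounded Calder\'on--Zygmund operators on $\Phi'$ via Corollary \ref{CZ bd} (extrapolation again), and thereby gains the decisive factor $2^{-j}$. Relatedly, even the containment $\cS\subset\Phi\cap\Phi'$ requires proof --- the paper derives it from the pointwise bound $M\chi_A(t)\geq\delta/\max(1,|t|)$ together with the boundedness of $M$, not from your ``multiplication by a Schwartz function is bounded on $\Phi$''. (Your remaining inexactness in the Besov case $q<\infty$ --- the tail norm is not ``exactly'' the tail of the series because of the boundary blocks such as $\psi_N(D)\psi_{N+1}(D)f$ --- is harmless, since the uniform $\cL(\Phi(X))$-bounds you already invoke repair it, just as in the paper.)
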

Note that the following duality pairing for $f \in B^{s,\infty}_{\Phi}(X)$ and $g\in B^{-s,1}_{\Phi'}(X^*)$ $(s\in \R)$ 
\begin{equation}\label{pairing}
\bracket{g}{f}:= \sum_{j,l\in \N_0} \int_{\R} \bracket{\psi_l(D)g(t)}{\psi_j(D)f(t)}_{X^*,X} \ud t,
\end{equation}
yields a natural embedding of $B^{-s,1}_{\Phi'}(X^*)$ into $(B^{s,\infty}_{\Phi}(X))^*$. 

As usual, the  $\sigma(B^{s,\infty}_\Phi(X), B^{-s,1}_{\Phi'}(X^*))$-topology is the weakest topology on $B^{s,\infty}_\Phi(X)$ for which every $g\in  B^{-s,1}_{\Phi'}(X^*)$ becomes continuous; see also Remark \ref{norms} below.

\begin{proof}
Let $\{\psi_j\}_{j\in \N_0}$ be a resolution of the identity on $\R$. By Corollary \ref{CZ bd}, $\psi_j(D)$, $j\in \N_0$, restrict to uniformly bounded operators in $\cL(\Phi(X))$ and in $\cL(\Phi')$. 
Set $\chi_j:= \psi_{j-1} + \psi_j + \psi_{j+1}$, $j\in \N_0$, with $\psi_{-1}\equiv 0$.
Since for every measurable set $A\subset \R$ with $|A|>0$ there exists a constant $\delta>0$ such that $M \chi_A\geq \frac{\delta}{\max(1, t)}$ $(t\in \R)$, by the ideal property of $\Phi$ and $\Phi'$ and our assumption on the boundedness of $M$ we get that $\cS\subset \Phi \cap \Phi'$.

{\it The case $\E = B^{s,q}_\Phi$.}   Fix $s\in \R$ and $q\in [1,\infty)$.
Let $(f_l)_{l\in \N}$ be a sequence in $B^{s,q}_{\Phi}(X)$ with $\| f_l\|_{B^{s,q}_{\Phi}(X)}\rightarrow 0$ as $l\rightarrow \infty$. Since $\{\psi_j(D)\}_{j\in \N_0}$ is a resolution of the identity on $\cS'(X)$ (see \eqref{resol on R}) and $\Phi(X) \subset \cS'_r(X)$, by H\"older's inequality, for every $\phi \in \cS$ we get 
\begin{align*}
| f_l(\phi) |_X & = \left|\sum_{j\in \N_0} \psi_j(D) f_l (\phi)\right|_X  = \left| \sum_{j\in \N_0} \psi_j(D) f_l [\cF(\chi_j \cF^{-1}\phi)]\right|_X \\
& = \left|  \sum_{j\in \N_0} \int_\R  \psi_j(D) f_l (t) (\chi_j(D)\breve\phi)(t) \ud t \right|_X\\
&  \leq  \sum_{j\in \N_0} \int_\R \left|  \psi_j(D) f_l (t) \right|_X |(\chi_j(D)\breve\phi)(t)| \ud t\\ 
& \leq \sum_{j\in \N_0} \left\| 2^{js} \psi_j(D) f_l \right\|_{\Phi(X)} \| 2^{-js} \chi_j(D)\breve\phi\|_{\Phi'} \\ 
& \leq \left( \sum_{j\in \N_0} \left\| 2^{jsq} \psi_j(D) f_l \right\|^q_{\Phi(X)} \right)^{1/q} 
\left( \sum_{j\in \N_0} \| 2^{-js} \chi_j(D)\breve\phi\|_{\Phi'}^{q'} \right)^{1/q'} \\ 
& \leq  \|f_l\|_{B^{s,q}_{\Phi}(X)} \left( \sum_{j\in \N_0} \| 2^{-js} \chi_j(D)\check\phi\|_{\Phi'}^{q'} \right)^{1/q'}  
\end{align*}
Here, we set $\breve{\phi}$ for $\phi(-\cdot)$.
Note that in the case when $s>0$ the proof is complete. For $s\leq 0$, let $\alpha > |s|+1$ and $\rho(t):= (1+t^2)^{-\alpha}$, $t\in \R$. 
It is easy to check that $2^{(|s|+1) j} \cF^{-1}(\rho\chi_j),{j\in \N_0},$ uniformly satisfy  the $\cK_1$-condition (see \eqref{con K}) and their $L^1$-norms are uniformly bounded too. By Proposition \ref{CZ bd}, $(2^{(|s|+1) j}\chi_j \rho)(D)$, $j\in \N_0$, restrict to uniformly bounded operators in $\cL(\Phi')$.

 Therefore, since  $\cF^{-1}(\rho^{-1} \cF^{-1}\check \phi)\in \cS \subset \Phi'$,  for every $j\in \N_0$ we get that
\begin{align*}
\| 2^{j|s|} \chi_j(D)\phi \|_{\Phi'} & \leq 2^{-j} \| (2^{(|s|+1)j} \rho \chi_j)(D)[\cF^{-1}(\rho^{-1} \cF^{-1}\check \phi)]  \|_{\Phi'}\\
& \leq 2^{-j} \| (2^{(|s|+1)j} \rho \chi_j)(D)\|_{\cL(\Phi')} \|[\cF^{-1}(\rho^{-1} \cF^{-1}\check \phi)]  \|_{\Phi'}.
\end{align*}
It completes the proof in this case. 

The proof of the last assertion makes the use of similar arguments to those applied above. For $f\in B^{s,q}_{\Phi}(X)$ and $N\in \N$ set $f_N:=\sum_{j\leq N}\psi_j(D) f\in \Phi(X)$. Since  $\{\psi_j(D)\}_{j \in \N_0}$ is the resolution of the identity operator on $\cS'(X)$ (see \eqref{resol on R}),  we have that $f=  f_N +  \sum_{j> N}\psi_j(D) f$. Moreover, since the operators $\{ \phi_j(D) \}_{\in \N_0}$ are uniformly bounded in $\cL(\Phi(X), \Phi(X))$, we get for $q<\infty$ that
\begin{align*}
\left\|f -  f_N \right\|_{B^{s,q}_{\Phi}(X)} & =  \left(  \sum_{j\in \N_0} 2^{sqj} \| \psi_j(D) \sum_{l>N} \psi_l(D)f \|^q_{\Phi(X)} \right)^{1/q}\\
& =  \Big( 2^{sqN} \| (\psi_N \psi_{N+1})(D)f \|^q_{\Phi(X)}\\
& + 2^{sq(N+1)}\| (\psi_{N+1}(\psi_{N+1}+\psi_{N+2}))(D)f \|^q_{\Phi(X)}\\
& + \sum_{j\geq N+2} 2^{sqj}\| (\psi_j \chi_j)(D) f\|^q_{\Phi(X)} \Big)^{1/q}\\
& \lesssim  \Big( \sum_{j\geq N} 2^{sqj} \| \psi_j(D) f\|^q_{\Phi(X)} \Big)^{1/q} \rightarrow 0 \textrm{ as } N\rightarrow \infty. 
\end{align*}
For $q=\infty$, note first that for every $j, l\in \N_0$ with $|j-l|>1$ 
\[
\bracket{\psi_l(D)g}{\psi_j(f-f_N)}_{\Phi'(X^*),\Phi(X)} = 0.
\]
Indeed, since $\breve{\psi}_{j}(D), \psi(D)\in \cS'(X^*)$ and $\cS(X^*)$ is dense in $\cS'(X^*)$, we get that 
\[
\bracket{\psi_l(D)g}{\psi_j(D)(f-f_N)}_{\Phi'(X^*),\Phi(X)}  = \bracket{\breve{\psi}_j(D)\psi_l(D)g}{\chi_j(D)(f-f_N)}_{\Phi'(X^*),\Phi(X)} = 0.
\]
Consequently, for $N>2$ we have that 
\begin{align*}
\left|\bracket{g}{f-f_N}\right| & = \sum_{r\in \{0, \pm 1\}} \sum_{l \geq N-2} \left|\bracket{\psi_l(D)g}{\psi_{l+r}(D)(f-f_N)}_{\Phi'(X^*),\Phi(X)}\right|\\
& \leq 3 \sup_{j\in \N_0}\| \chi_j(D)\|_{\cL(\Phi(X))} \|f\|_{B^{s,\infty}_{\Phi}(X)} \sum_{l\geq N-2} 2^{-sl} \|\psi_l(D) g\|_{\Phi'(X^*)}.
\end{align*}
Therefore, the proof of this case is complete.

{\it The case $\E= F^{s,q}_\Phi$.} Fix $s\in \R$ and $q\in (1, \infty)$.
Let $f\in F^{s,q}_\Phi(X)$ and $f_N:= \sum_{j\leq N} \psi_j(D)f$ for all $N\in \N$. Note that $f_N \in \Phi(X) \cap F^{s,q}_\Phi(X)$ and 
\[
\|f-f_N\|_{F^{s,q}_\Phi(X)} \lesssim  \sum_{r=-1}^1 \Big\|\big( \sum_{j\geq N} 2^{jsq}|\psi_{j+r}(D)\psi_j(D)f|_X^q\big)^{1/q}\Big\|_{\Phi}.
\]
Set 
\[
\widetilde Gf := \widetilde G_{N,r}f := \big( \sum_{j\geq N} 2^{jsq}|\psi_{j+r}(D)\psi_j(D)f|_X^q\big)^{1/q}, 
\]
\[
 Gf : =  G_{N}f := \big( \sum_{j\geq N} 2^{jsq}|\psi_j(D)f|_X^q\big)^{1/q}\quad (r\in \{\pm 1 ,0\}, N\in \N).
\]
Let $h\in \Phi'$ with $h\neq 0$ and $w:= w_{Gf, h}= \cR(Gf)^{1-q} \cR' h$; see \eqref{weight}.
Then, by similar arguments as in the proof of Theorem \ref{RdeF}, we get 
\begin{align*}
\sup_j\| \psi_j(D) &\|_{\cL(L^q_w(X))}  \|\cR\|_{\cL(\Phi)} \|Gf\|_\Phi \|\cR'\|_{\cL(\Phi')} \| h\|_{\Phi'}\\
& \geq \sup_j\| \psi_j(D) \|_{\cL(L^q_w(X))} \|\cR Gf\|_\Phi \|\cR'h\|_{\Phi'} \\
& \geq \sup_j\| \psi_j(D) \|_{\cL(L^q_w(X))} \left( \int_\R \cR(Gf) \cR' h \, \ud t \right)^\frac{1}{q} \left(  \int_\R \cR(Gf) \cR' h \, \ud t \right)^\frac{1}{q'}\\
&\geq  \left( \int_\R (\widetilde Gf)^q w \,\ud t \right)^\frac{1}{q} \left(  \int_\R \cR(Gf) \cR' h \ud t \right)^\frac{1}{q'}\\
&\geq \int_{\R} \widetilde Gf  \cR' h \, \ud t\\
&\geq \int_\R \widetilde Gf |h| \, \ud t.
\end{align*}
Since 
$\sup_{f\in F^{s,q}_{\Phi}(X), h\in \Phi'} [w_{Gf,h}]_{A_q} <\infty$, by the Lorentz-Luxemburg theorem, we get that $\widetilde G f\in \Phi(X)$ and for all $r\in \{\pm 1 ,0\}, N\in \N$ we have 
\[
\| \widetilde G_{N,r} f\|_\Phi \leq \sup_{0\neq h\in \Phi'}\sup_j\| \psi_j(D) \|_{\cL(L^q(\R, w_{Gf,h}\ud t; X)}  \|\cR\|_{\cL(\Phi)} \|G_Nf\|_\Phi \|\cR'\|_{\cL(\Phi')}. 
\]
Therefore, $f_N\rightarrow f$ in $F^{s,q}_\Phi(X)$ as $N\rightarrow \infty$. The proof of \eqref{embedding} follows the arguments already presented above. 
\end{proof}

\begin{remark}\label{norms}
a) Applying similar arguments to those from the above proof, one can easily show that the definition of Besov and Triebel-Lizorkin type spaces introduced in Section \ref{spaces} is independent of a particular choice of a function $\psi$. 
The same refers to the definition of the duality paring \eqref{pairing}.
We omit the proof.

b) Direct arguments based on the ideal property of $\Phi'$ shows that $\Phi'(X^*)$ is separating for $\Phi(X)$, i.e. for each $f\in \Phi(X)$, if for all $g\in \Phi'(X)$ 
\[
\int_{\R} \bracket{g(t)}{f(t)}_{X^*,X} \ud t = 0, 
\] 
then $f = 0$. Since for every $f\in B^{s,\infty}_{\Phi}(X)$ and $g\in \Phi'(X^*)$, $g_N:= \psi(2^{-N}D)g \in B^{-s,1}_{\Phi'}(X^*)\cap \Phi'(X^*)$, and for all $l \in \N_0$ and $N > l+1$
\[
\bracket{g-g_N}{\psi_l(D)f} = \bracket{\breve{\psi_l}(D)(g-g_N)}{\chi_l(D)f}=0, 
\]
we get that $B^{-s,1}_{\Phi'}(X^*)$ is also separating on $B^{s,\infty}_{\Phi}(X)$.

\end{remark}

Now we are in the position to extend Theorem  \ref{RdeF}(ii). This extension is applied in the study of the extrapolation of $L^p$-maximal regularity under additional geometric conditions on the underlying Banach space $X$; see Remark \ref{R bound}(b) and Corollary \ref{for umd}.  

\begin{theorem}\label{ext general} Let $X$ and $Y$ be Banach spaces and $p\in (1,\infty)$. Let $\Phi$ be a  Banach function space such that the Hardy-Littlewood operator $M$ is bounded on $\Phi$ and its dual $\Phi'$.

Let $T:\cS(X) \rightarrow\cS'(Y)$ be a linear operator such that  for all $f\in \cS(X)$
\begin{equation}\label{commute}
T\psi_j(D)f = \psi_j(D) Tf \quad\quad (j\in \N_0).
\end{equation}
Then the following assertions hold.
\begin{itemize}
\item [(i)] If for every  $\cW \subset A_p$ with $\sup_{w\in \cW} [w]_{A_p} < \infty$
\[
\sup \left\{ \| T \psi_j(D) f \|_{L^p_w(Y)}: f\in \cS(X) \textrm{ with } \|f\|_{L^p_w(X)}\leq 1,   w\in \cW \textrm{ and } j\in \N_0 \right\} < \infty,
\]
then for every $s\in \R$ and $q\in [1,\infty]$ the operator $T$ has {an}~extension to {an}~operator $\cT_\Phi$ in  $\cL(B^{s,q}_\Phi(X), B^{s,q}_\Phi(Y))$.

Moreover, if $\Psi$ is another Banach function space such that $M$ is bounded on $\Psi$ and $\Psi'$, then $\cT_\Psi f = \cT_\Phi f$ for all $f\in 
B^{s,q}_\Phi(X)\cap B^{s,q}_\Psi(X)$. 

The same conclusion holds for Triebel-Lizorkin spaces $F^{s,q}_\Phi$ with $s\in \R$ and $q\in (1,\infty)$.

\item [(ii)] 
If for every  $\cW \subset A_p$ with $\sup_{w\in \cW} [w]_{A_p} < \infty$
\[
\sup \left\{ \| T f \|_{L^p_w(Y)}: f\in \cS(X) \textrm{ with } \|f\|_{L^p_w(X)}\leq 1,   w\in \cW \right\} < \infty,
\]
then for every 
\[
\E \in \left\{\Phi,\,B^{s,q}_\Phi, \, F^{s,r}_\Phi: s\in \R, q\in [1,\infty], r\in (1,\infty) \right\} 
\]
the operator $T$ extends to an operator $\cT_\E$ in $\cL(\E(X), \E(Y))$.

Moreover, all such extensions are consistent, that is, if $\Phi_i$, $i=1,2,$ are Banach function spaces such that $M$ is bounded on them and their duals 
and 
\[
\E, \widetilde\E \in \left\{\Phi_i,\, B^{s,q}_{\Phi_i},\,  F^{s,r}_{\Phi_i}: s\in \R, q\in [1,\infty], r\in (1,\infty),  i=1,2\right\},
\]
 then $\cT_\E f = \cT_{\widetilde \E} f$ for all $f\in \E(X) \cap \widetilde \E (X)$. 

\end{itemize}

\end{theorem}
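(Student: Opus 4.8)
The plan is to build the extension $\cT_\E$ separately on each type of space in the list and then verify consistency across all of them. The backbone is Theorem \ref{RdeF}(ii), which already produces the $\Phi$-level extension under a uniform weighted bound, so the first task is to package the hypotheses of parts (i) and (ii) so that they feed directly into the iteration algorithm and into Lemma \ref{completness}.

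\textbf{Part (i): the scales $B^{s,q}_\Phi$ and $F^{s,q}_\Phi$.} First I would fix $s\in\R$, $q\in[1,\infty]$ and use the intertwining relation \eqref{commute} together with the uniform weighted bound on the family $\{T\psi_j(D)\}_{j\in\N_0}$. The idea is to apply Theorem \ref{RdeF}(ii) not to $T$ itself but to the family $\{T_j\}:=\{T\psi_j(D)\}_{j\in\N_0}$, which by hypothesis satisfies the required uniform $A_p$-weighted bound; this yields extensions $\cT_j=\cT_{\psi_j(D)}$ on $\bigcup_{w\in A_p}L^p_w(X)\supset\Phi(X)$ with $\sup_j\|\cT_j\|_{\cL(\Phi(X),\Phi(Y))}<\infty$. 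Then, for $f\in\cS(X)$, the Besov norm of $Tf$ is controlled by
\[
\|Tf\|_{B^{s,q}_\Phi(Y)}=\Bigl(\sum_j\|2^{sj}\psi_j(D)Tf\|^q_{\Phi(Y)}\Bigr)^{1/q}
=\Bigl(\sum_j\|2^{sj}\cT_j\psi_j(D)f\|^q_{\Phi(Y)}\Bigr)^{1/q}
\lesssim\|f\|_{B^{s,q}_\Phi(X)},
\]
where the last step uses $\sup_j\|\cT_j\|<\infty$ applied termwise and then reassembles the $\ell^q$-sum. This gives a bounded operator from $(\cS(X),\|\cdot\|_{B^{s,q}_\Phi})$ into $B^{s,q}_\Phi(Y)$. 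To pass from $\cS(X)$ to all of $B^{s,q}_\Phi(X)$ I would invoke the density statement of Lemma \ref{completness}: for $q<\infty$ the truncations $f_N=\psi(2^{-N}D)f$ converge to $f$ in $B^{s,q}_\Phi(X)$, so the bounded operator extends by continuity. The same computation works verbatim for $F^{s,q}_\Phi$ with $q\in(1,\infty)$, replacing the $\ell^q$-sum-of-$\Phi$-norms by the $\Phi$-norm-of-$\ell^q$-function; here the restriction $q>1$ is exactly what Lemma \ref{completness} requires for density and what the iteration argument in its proof needs.

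\textbf{Part (ii): adding $\Phi$ and the $q=\infty$ Besov case.} Here the hypothesis is a uniform weighted bound on $T$ itself (not merely on $T\psi_j(D)$), which is formally stronger; combined with \eqref{commute} it trivially implies the hypothesis of (i), so the $B^{s,q}_\Phi$ ($q<\infty$) and $F^{s,r}_\Phi$ extensions come for free. The new cases are $\E=\Phi$, handled by a direct application of Theorem \ref{RdeF}(ii) to the single operator $T$, and $\E=B^{s,\infty}_\Phi$. For $q=\infty$ density of $\cS(X)$ fails, so continuity alone does not define the extension; instead I would use the $\sigma(B^{s,\infty}_\Phi(X),B^{-s,1}_{\Phi'}(X^*))$-density from Lemma \ref{completness} and define $\cT_\E f$ as the weak-$*$ limit of $\cT_\E f_N$, the finiteness of $\|\cT_\E f_N\|_{B^{s,\infty}_\Phi(Y)}$ being uniform in $N$ by the $q=\infty$ analogue of the norm estimate above. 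The duality pairing \eqref{pairing} identifies the limit unambiguously because $B^{-s,1}_{\Phi'}(X^*)$ is separating on $B^{s,\infty}_\Phi(X)$ (Remark \ref{norms}(b)).

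\textbf{Consistency.} The main obstacle is the consistency claim, since it must hold across two different base spaces $\Phi_1,\Phi_2$ and all four families of refined spaces simultaneously, and in the $q=\infty$ case we have only weak-$*$ convergence. My strategy is to reduce everything to agreement on the common dense core $\cS(X)$, on which every extension coincides with $T$ by construction. Given $f\in\E(X)\cap\widetilde\E(X)$, I would first observe that by Theorem \ref{RdeF}(i) both $\Phi_1(X)$ and $\Phi_2(X)$ embed into $\bigcup_{w\in A_p}L^p_w(X)$, so there is a common weight $w\in A_p$ (using that $\min(w,v)\in A_p$, as recorded in the proof of Theorem \ref{RdeF}) with $f\in L^p_w(X)$; on $L^p_w(X)$ the operator $T$ has a unique bounded extension $T_{j,w}$ (density of $\cS(X)$ in $L^p_w(X)$), and all the $\E$-level extensions restrict to this same $L^p_w$-extension because each was built from the identical weighted operators via the iteration algorithm. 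Thus $\cT_\E f$ and $\cT_{\widetilde\E}f$ both equal $T_w f$ pointwise a.e., hence agree as elements of $\cS'(Y)$. The delicate point I expect to spend the most care on is the $q=\infty$ case, where $\cT_\E f$ is only a weak-$*$ limit: there I would verify that the weak-$*$ limit nonetheless agrees with the $L^p_w$-pointwise value by testing against $\cS(Y^*)\subset B^{-s,1}_{\Phi'}(X^*)$ and using the density of $\cS(X)$ in $L^p_w(X)$ to push the equality $\cT_\E f_N=T_w f_N$ through the limit. Once consistency on $\cS(X)$ and compatibility with the common $L^p_w$-extension are established, the general case follows from the uniqueness of limits in $\cS'(Y)$.
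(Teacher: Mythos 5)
Your overall architecture (extend the dyadic family $\{T\psi_j(D)\}_{j\in\N_0}$ via Theorem \ref{RdeF}(ii), assemble the pieces, then use Lemma \ref{completness} for density and a weak-type argument for $q=\infty$) is the same as the paper's, but there are genuine gaps in how you assemble and extend. First, your extension by continuity from $\cS(X)$ fails: Lemma \ref{completness} gives density of $\Phi(X)\cap B^{s,q}_\Phi(X)$ in $B^{s,q}_\Phi(X)$, \emph{not} density of $\cS(X)$ --- the truncations $f_N=\psi(2^{-N}D)f$ lie in $\Phi(X)\cap B^{s,q}_\Phi(X)$ and in general cannot be approximated from $\cS(X)$ (take $\Phi=L^{p,\infty}$; the non-density of $\cS$ is precisely the difficulty this theorem is built to circumvent). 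The paper instead defines $\cT_\Phi f:=\sum_l\cT_l f$ directly for $f\in\Phi(X)\cap B^{s,q}_\Phi(X)$, after first upgrading \eqref{commute} to $\psi_j(D)\cT_l f=\cT_l\psi_j(D)f$ on all of $\Phi(X)$ (via Theorem \ref{RdeF}(i) and density of $\cS(X)$ in $L^p_w(X)$); this upgrade is absent from your argument and is needed for the norm estimate once $f\notin\cS(X)$. (Also, your displayed identity $\psi_j(D)Tf=\cT_j\psi_j(D)f$ is false since $\psi_j^2\neq\psi_j$; the correct localization is $\cT_j f=\cT_j\chi_j(D)f$ with $\chi_j=\psi_{j-1}+\psi_j+\psi_{j+1}$.) The same defect infects your $q=\infty$ step (the uniform bound on $\cT_\E f_N$ is invoked for $f_N\notin\cS(X)$, and existence of the $\sigma(B^{s,\infty}_\Phi,B^{-s,1}_{\Phi'})$-limit is not automatic --- there is no Alaoglu-type compactness available; the paper instead proves $\cS'(Y)$-convergence of the series $\sum_l\cT_l\chi_l(D)f$ together with a direct norm bound) and your consistency argument: for the smoothness scales a general $f\in B^{s,q}_{\Phi_1}(X)\cap B^{s,q}_{\Phi_2}(X)$ is merely a tempered distribution (think $s<0$) and need not lie in any $L^p_w(X)$, so Theorem \ref{RdeF}(i) does not apply to $f$ itself; the common-weight reduction works only for the truncations $f_N\in\Phi_1(X)\cap\Phi_2(X)$, after which one must pass to the limit in both topologies, as the paper does.

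Second, the Triebel--Lizorkin case is emphatically not ``verbatim'': the uniform bounds $\sup_j\|\cT_j\|_{\cL(\Phi(X),\Phi(Y))}<\infty$ control $\ell^q$-sums of $\Phi$-norms (Besov) but give no control of the $\Phi$-norm of the $\ell^q$-valued square function, since termwise estimates cannot be pulled inside the inner $\ell^q$-sum. The paper needs a separate, nontrivial argument here: it first observes that $F^{s,q}_{L^q_w}=B^{s,q}_{L^q_w}$ (Fubini, matching exponents), so the weighted case follows from the Besov case, and then runs the Rubio de Francia algorithm a second time with the weight $w_{Gf,h,q}=\cR(Gf)^{1-q}\cR'h$ built from the square function $Gf$ itself, which converts the uniform weighted bounds into the vector-valued estimate $\|G(\cT f)\|_\Phi\lesssim\|Gf\|_\Phi$. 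Without this weight-adapted-to-$Gf$ trick (or an $\ell^q$-valued extrapolation theorem proved by the same device), your $F^{s,r}_\Phi$ claims in both (i) and (ii) do not follow. A smaller point: in (ii), deducing the hypothesis of (i) from the uniform weighted bound on $T$ uses the uniform weighted boundedness of the operators $\psi_j(D)$, which is supplied by Corollary \ref{CZ bd} and should be cited rather than dismissed as trivial.
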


\begin{proof} $(i)$  
By Theorem \ref{RdeF}(ii) the operators $T\psi_l(D)$, $l\in \N_0$, extend to uniformly bounded operators $\cT_l:=\cT_{l,\Phi}$, $l\in \N_0$, in $\cL(\Phi(X), \Phi(Y))$.  
Let $q<\infty$.
We show that the operators $\sum_{l\leq N} \cT_l$, $N\in \N$, are uniformly bounded in $\cL(B^{s,q}_\Phi(X), B^{s,q}_\Phi(Y))$ and that for every $f\in \Phi(X)\cap B^{s,q}_\Phi(X) $
\[
\lim_{N\rightarrow \infty} \sum_{l\leq N} \cT_l f
\]
exists in $B^{s,q}_\Phi(Y)$. Since, by Lemma \ref{completness}, $\Phi(X)\cap B^{s,q}_\Phi(X)$ is a dense subset of $B^{s,q}_\Phi(X)$, the series $\sum_{l\in \N_0} \cT_l$ defines an operator $\cT_\Phi$ in $\cL(B^{s,q}_\Phi (X), B^{s,q}_\Phi (Y))$.  

First, note that by \eqref{commute},  
for all  $f\in S(X)$ and  all $j, l\in \N_0$, we have
\[
\psi_j(D)T\psi_l(D) f = T \psi_l(D)\psi_j (D) f = T (\psi_l \psi_j ) (D) f .
\]
Since $S(X)$ is a dense subset of $L^p_w(X)$ for all $w\in A_p$, by Lemma \ref{RdeF}(i), for every $f\in \Phi(X)$ we get
\[
\psi_j(D)\cT_l f = \cT_l \psi_j(D) f
\]
with $\psi_j(D)\cT_l f = 0 $, when $|j-l|>1$.

Further, for all $f\in \Phi(X)\cap B^{s,q}_\Phi(X)$ and all integers $0\leq N < N'$ we get (with usual modification for $q=\infty$)
\begin{align*}
\left\|\sum_{N \leq l\leq N' } \cT_l f \right\|_{\B^{s,q}_{\Phi}(Y)} 
& \leq \left(  \sum_{j}2^{jsq} \left \| \psi_j(D) \sum_{N\leq l\leq N'} \cT_l f \right \|_{\Phi(Y)}^q \right)^{1/q}\\
& \leq \left(  \sum_{N-1\leq j \leq N'+1}2^{jsq} \left \| \sum_{l=j-1}^{j+1} \cT_l \psi_j(D) f \right \|_{\Phi(Y)}^q \right)^{1/q}\\
&\lesssim \left(  \sum_{N-1\leq j \leq N'+1}2^{jsq} \left \| \psi_j(D) f \right \|_{\Phi(X)}^q \right)^{1/q}, 
\end{align*} where we set $\psi_{-1} \equiv 0$, when $N=0$. 
Therefore, our claim holds.

Since $\{\psi_j(D)\}_{j \in \N_0}$ is a resolution of the identity operator on $\cS'(Y)$, that is,  for every $g\in \cS'(Y)$
\[
\lim_{N \rightarrow \infty} \sum_{0 \leq j \leq N }\psi_j(D) g = g  \textrm{ in } \cS'(Y),\]
we get that $\cT_\Phi f =  Tf$ for all $f\in \cS(X)$. 

Further, if $q<\infty$, then for $f\in B^{s,q}_\Phi(X)\cap B^{s,q}_\Psi(X)$, $\psi(2^{-N} D) f \in \Phi(X)\cap \Psi(X)$ $(N\in \N)$ and  $\psi(2^{-N} D) f \rightarrow f$ both in $B^{s,q}_\Phi(X)$ and $B^{s,q}_\Psi(X)$; see Lemma \ref{completness}. Since, by Theorem \ref{RdeF}(ii), $\cT_{l,\Phi}$ is consistent with $\cT_{l,\Psi}$ on $\Phi(X)\cap \Psi(X)$, we get that $\cT_\Phi \psi(2^{-N} D) f = \cT_\Psi \psi(2^{-N} D) f$. 
If $q=\infty$, then note that $\cT_l f= \cT_l \chi_l(D) f$ for every $f\in \Phi(X)$ and $l\in \N_0$. Moreover, $\cT_l \chi_l(D)\in \cL(B^{s,\infty}_{\Phi}(X), B^{s,\infty}_{\Phi}(Y))$ uniformly in $l\in \N_0$. It is readily seen that for every $f\in B^{s,\infty}_{\Phi}(X)$ the series $\sum_{l\in \N_0} \cT_l\chi_l(D) f$ converges in $\cS'(Y)$ to an element $\cT_\Phi f$ of $B^{s,\infty}_\Phi(Y)$ such that 
\[
\|\cT_\Phi f\|_{B^{s,\infty}_\Phi(Y)}\leq C \sup_{j} \| \cT_l \|_{\cL(\Phi(X), \Phi(Y)} \|f\|_{B^{s,\infty}_\Phi(X)},
\] where $C$ is a constant independent of $f$. Therefore, $\cT_\Phi$ is in $\cL(B^{s,\infty}_{\Phi}(X), B^{s,\infty}_{\Phi}(Y))$. For the consistency of $\cT_\Phi$ and $\cT_\Psi$, note that if $f\in B^{s,\infty}_{\Phi}(X)\cap B^{s,\infty}_{\Psi}(X)$, then 
\begin{align*}
\cT_\Psi f & = \cS'(Y)-\sum_{l} \cT_{l,\Psi} \chi_l(D) f  = \cS'(Y)-\lim_{N\rightarrow \infty} \sum_{l\leq N} \cT_{l,\Psi} \chi_l(D) \psi(2^{-N-2}D)f\\
& = \cS'(Y)-\lim_{N\rightarrow \infty} \sum_{l\leq N} \cT_{l,\Phi} \chi_l(D) \psi(2^{-N-2}D)f = \cT_{\Phi} f.
\end{align*} This completes the proof of $(i)$ for Besov spaces.
\\
For the last statement of $(i)$, first note that $F^{s,q}_\Psi = B^{s,q}_\Psi$ for all $\Psi=L^q_w$ with $q\in (1,\infty)$ and $w\in A_q$. Therefore, for such $\Psi$, $T$ extends to an operator $\cT_\Psi$ in $\cL(F^{s,q}_\Psi(X),F^{s,q}_\Psi(Y)$ and its norm is bounded by 
\begin{align*}
\mu_{w}  :=\mu_{w,q} &:=\sup_{l\in \N_0} \|\cT_{l,\Psi}\|_{\cL(\Psi(X),\Psi(Y))}\\
 & = \sup\{ \|T\psi_l(D) f\|_{\Psi(Y)} : f\in \cS(X) \textrm{ with } \|f\|_{\Psi(X)} \leq 1, l\in \N_0 \}. 
\end{align*}
Moreover, one can check that for every $f\in F^{s,q}_\Psi(X)$ and $j\in \N_0$ we have that 
\begin{equation}
\|\psi_j(D) \cT f\|_{\Psi(Y)} \leq 3 \mu_w \| \psi_j(D) f\|_{\Psi(X)} 
\end{equation}

Fix $\Phi$ and $f\in \Phi(X) \cap F^{s,q}_{\Phi}(X)$, i.e. $Gf\in \Phi$, where
\[
Gf:= \left( \sum_{j\in \N_0} | 2^{sj} \psi_j(D) f(\cdot) |^q_X \right)^{1/q}. 
\]
For $h\in \Phi'$, $h\neq  0$, we set
\[
w:= w_{Gf, h, q}:= \cR(Gf)^{1-q} \cR' h.
\]
 Then $Gf\in L^q_w$, i.e. $f \in  F^{s,q}_{L^q_w}(X)$ and $\cT f := \cT_{L^q_w} f \in F^{s,q}_{L^q_w}(X)$; see \eqref{weight}.
Moreover, a~similar argument as in the proof of Theorem \ref{RdeF} gives 
\begin{align*}
3\mu_w  \|\cR\|_{\cL(\Phi)} \|Gf\|_\Phi \|\cR'\|_{\cL(\Phi')} \| h\|_{\Phi'}
& \geq  3\mu_w \left( \int_\R \cR(Gf) \cR' h \, \ud t \right)^\frac{1}{q} \left(  \int_\R \cR(Gf) \cR' h \, \ud t \right)^\frac{1}{q'}\\
&\geq  \left( \int_\R G(\cT f)^q w \,\ud t \right)^\frac{1}{q} \left(  \int_\R \cR(Gf) \cR' h \ud t \right)^\frac{1}{q'}\\
&\geq \int_\R G(\cT f) h \, \ud t
\end{align*}
Since 
\[
\sup \left\{ [w_{Gf,h,q}]_{A_q}: f\in \Phi(X)\cap F^{s,q}_{\Phi}(X), h\in \Phi' \right\} <\infty,
\]
\[
\mu:= \sup\left\{ 3 \mu_w: w= w_{Gf,h,q} \textrm{ with } f\in \Phi(X)\cap F^{s,q}_{\Phi}(X), h\in \Phi'\right\}<\infty
\] and, by the Lorentz-Luxemburg theorem, $\cT f\in F^{s,q}_{\Phi}(Y)$ with
\[
\| \cT f \|_{F^{s,q}_{\Phi}(Y)}\leq  \mu  \|\cR\|_{\cL(\Phi)} \|\cR'\|_{\cL(\Phi')} \| f \|_{F^{s,q}_{\Phi}(X)}.
\]
Therefore, since $ \Phi(X)\cap F^{s,q}_{\Phi}(X)$ is dense in $F^{s,q}_{\Phi}(X)$ (see Lemma \ref{completness}), $\cT$ extends to an operator in $\cL(F^{s,q}_{\Phi}(X), F^{s,q}_{\Phi}(Y))$. 
 Finally, by Lemma \ref{completness} and similar arguments to those presented in the case of Besov spaces above, one can show the consistency for operators resulting from different underlying Banach function spaces $\Phi$ and $\Psi$. This completes the proof of $(i)$.

$(ii)$ By Theorem \ref{RdeF}, $T$ extends to an operator in $\cL(\Phi(X), \Phi(Y))$. For instance, the last assertion of Corollary \ref{CZ bd} shows that $T$ satisfies the assumption of the part~$(i)$. Thus, the case when $\E = B^{s,q}_\Phi$ or $\E = F^{s,q}_\Phi$ is already proved above. 

For the consistency $\cT_\E$ with $\cT_{\widetilde\E}$, by Theorem \ref{RdeF}, it holds when $\E = \Phi_1$ and $\widetilde\E = \Phi_2$. By Lemma \ref{completness} and similar arguments to those presented in the proof of $(i)$, we easily get the general case.  
\end{proof}

\section{Multipliers and the regularity of their kernels}\label{sec multip}
In this section, we specify Theorem \ref{ext general} for a class of Fourier multipliers in which we are typically interested in the following sections. 

\subsection{The Fourier multipliers}
Let $X$ and $Y$ be Banach spaces.
 For  $a\in L^\infty(\cL(X,Y))$ we set  
\[
{D_a}: =\{ f\in \cS'(X): \cF f \in \cS'_r(X) \textrm{ and } a(\cdot)\cF f \in \cS'(Y)\} 
\] to denote the {\it initial} domain of a Fourier multiplier $a(D)$ associated with $a$ which is defined by 
\[ 
a(D)f := \cF^{-1} (a(\cdot) \cF f)\quad  (f\in D_a).  
\]
If $\cG (X)$ is a subspace of $\cS'(X)$ (equipped with the relative topology of $\cS'(X)$) and $a(D)$ has a unique extension to an operator in $\cL(\cG (X), \cS'(Y))$, then we denote such extension again by $a(D)$. Since $X\otimes \cF^{-1}\cD(\dot{\R})$ is a dense subset of $\cS'(X)$, if $X\otimes \cF^{-1}\cD \subset \cG(X)$, then such extension of $a(D)$ is unique. 

In particular, as was already mentioned in Section 1, if $a\in \cO_M(\cL(X,Y))$, 
then $a(D) \in \cL(\cS'(X), \cS'(Y))$ and 
\begin{equation}\label{dist repr}
a(D) f = \cF^{-1} (\Theta (a, \cF f)) \qquad f\in \cS'(X),
\end{equation}
where $\Theta$ denotes a hypocontinuous, bilinear map from $\cO_M(\cL(X,Y))\times \cS'(X)$ into $\cS'(Y)$ (see \cite[Theorem 2.1]{Am97}).  
The multipliers symbols, which are involved in the study of maximal regularity property of evolution equations usually are of $\cO_M$-class; see, e.g. Proposition \ref{OM class}. For instance, it is the case of the problem \eqref{P1} discussed in Section \ref{intro}, where the {\it solution} operator $U$ is given by a multiplier $a(D)$ with $a = (i\cdot + A)^{-1} \in \cO_M(\cL(X,Y))$ and $Y:=D_A$.

Further, by the convolution theorem (see e.g. \cite[Theorem 3.6]{Am97}), for all 
$a\in L^\infty(\cL(X,Y))$ and $f\in \cS(X)$, 
\begin{equation}\label{conv repr}
a(D)f = \cF^{-1}a \ast f.
\end{equation}

Here, $\cF^{-1} a \in \cS'(\cL(Y,X))$, that is, $\bracket{\cF^{-1}a}{\phi}:=\bracket{a}{\cF^{-1} \phi}$, $\phi \in \cS$. For more regular symbols $a$ such convolution representation of the corresponding multiplier $a(D)$ allows us to use the basic techniques of harmonic analysis to study of boundedness properties/invariant subspaces of $a(D)$.   
However, in general, such representation  makes sense 
only for $f$ in a proper subset of a domain of $a(D)$. For instance, even in the case 
$a$ in the $\cO_M$-class, in general, what one can say is that \eqref{conv repr} holds for $f \in \cE'(X) \cup S(X)$, where $\cE'(X)$ denotes the space of distributions with compact support. 

Depending on  support conditions or integrability conditions of $\cF^{-1}a$ this expression can be extended for a larger class of functions; see e.g. \cite[Remark 3.2 and Theorem 3.5]{Am97}.
It leads to a question on the consistency of $a(D)$ with its bounded extensions resulting from such convolution representation.
We address this question for a particular class of symbols below.

\subsection{The integral representation of $a(D)$.}\label{int represent}
For  symbols $a$, which arise in the study of abstract evolution equations, usually $\cF^{-1} a$ is a tempered distribution which is a regular one only on $\dot \R$.
 That is,  there exists a function $k\in L^1_{loc}(\dot \R;\cL(X,Y))$ such that for every $\phi \in \cD(\dot \R; \C)$
\[
\bracket{\cF^{-1}m}{\phi} = \bracket{k}{\phi}:= \int_\R k(t)\phi(t) \ud t.
\]
Let 
\[
Cf(t):= \int_\R k(r)f(t-r)\ud r 
\]
for every $f\in L^1_c(X)$ and a.e. $t\notin \supp f$.
Here, $L^1_c(X)$ stands for the space of all $X$-valued, measurable, Bochner integrable functions with compact support in $\R$. 
 By Young's inequality, this integral is absolutely convergent for a.e. $t\notin \supp f$. 

Note that, even in the case $a\in \cO_M(\cL(X,Y))$,  without any further information on $a(D)$ we only get that
\begin{equation}\label{si repr}
a(D)f(t) = Cf(t)
\end{equation}
for $f\in L^1_c\otimes X$ and a.e. $t\notin \supp f$ (a priori we know that $a(D)$ is only continuous on $\cS'(X)$;  cf. also \cite[Example 2.11]{HyWe06}). If we know that, e.g. $a(D)$ restricts to an operator in $\cL(L^p(X), L^p(Y))$ for some $p\in (1,\infty)$, then of course such representation holds for $f\in L^1_c(X)$.

In the case when, e.g. $X\otimes \cD$ is not dense in a Banach space $\E(X)$ under consideration, if there exists an operator $T$ in $\cL(\E(X), \E(Y))$ such that $Tf(t) = Cf(t)$ for every $f\in L^1_c(X)$ and a.e. $t\notin \supp f$, we do not have any {\it ad hoc} argument to show that $T$ is consistent with $a(D)$ on $\E(X)$, i.e., $T$ is a restriction of $a(D)$ to $\E(X)$. The Rubio de Francia iteration algorithm, Theorem \ref{RdeF}, makes it all rigorous.

The following result is a reformulation of \cite[Proposition 4.4.2, p.245]{St93}. 
 It shows how Mihlin type estimates of a symbol $a$ are reflected by its distributional kernel $\cF^{-1}a \in \cS'(\cL(X,Y))$. Recall that $\dot{\R}:=\R \setminus \{0\}$.  

\begin{lemma}\label{CZ cond}
Let $\gamma \in \N$ and $\gamma \geq 2$. If function $a\in \cC^\gamma(\dot\R ; \cL(X,Y))$ satisfies 
\[
[a]_{\m_\gamma}:=\max_{l=0,...,\gamma}\sup_{t\neq 0} \|t^l a^{(l)}(t)\|_{\cL(X,Y)}<\infty \quad \quad  {(\frak M_\gamma)} 
\]
then $\cF^{-1} a = k$ in $\cS'(\cL(X,Y))$ for a function $k\in \cC^{\gamma - 2}(\dot \R; \cL(X,Y))$ such that 
\[
[k]_{\cK_{\gamma-2}} := \max_{l=0,...,\gamma-2} \sup_{s\neq 0} \|t^{l+1} k^{(l)}(t)\|_{\cL(X,Y)}\leq C[a]_{\m_\gamma}, 
\]
where the constant $C$ is independent of $a$. 
\end{lemma}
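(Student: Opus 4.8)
The plan is to reduce the operator-valued statement to a standard Littlewood--Paley computation; the $\cL(X,Y)$-valued setting is harmless, since every bound below is taken in the operator norm $\|\cdot\|_{\cL(X,Y)}$ while all the Fourier manipulations act only on the scalar exponential. First I would fix $\phi\in\cD(\dot\R)$ supported in the annulus $\{1/2\leq|\xi|\leq2\}$ with $\sum_{j\in\ZZ}\phi(2^{-j}\xi)=1$ for $\xi\neq0$, and decompose $a=\sum_{j\in\ZZ}a_j$ with $a_j:=a\,\phi(2^{-j}\cdot)$. Rescaling, I set $b_j(\eta):=a(2^j\eta)\phi(\eta)$, so that each $b_j$ is supported in the fixed annulus and $k_j:=\cF^{-1}a_j$ satisfies $k_j(t)=2^j g_j(2^jt)$ with $g_j:=\cF^{-1}b_j$. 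The point of this step is that the chain rule together with the hypothesis $(\frak M_\gamma)$ gives, uniformly in $j$, $\sup_\eta\|b_j^{(m)}(\eta)\|_{\cL(X,Y)}\lesssim[a]_{\m_\gamma}$ for all $0\leq m\leq\gamma$, because $\|2^{jm}a^{(m)}(2^j\eta)\|_{\cL(X,Y)}\lesssim[a]_{\m_\gamma}|\eta|^{-m}$ is bounded on the annulus.

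Next I would record pointwise bounds on $g_j$ and its derivatives. Writing $g_j^{(l)}(s)=\int(i\eta)^l b_j(\eta)e^{is\eta}\,\ud\eta$ and integrating by parts $m\leq\gamma$ times (legitimate since $(i\eta)^l b_j$ is $\cC^\gamma$ with compact support, so the boundary terms vanish), the uniform $\cC^\gamma$-bounds of the first step yield
\[
\|g_j^{(l)}(s)\|_{\cL(X,Y)}\lesssim[a]_{\m_\gamma}\,\min(1,|s|^{-\gamma}),
\]
with implicit constant depending only on $l$ and $\phi$. Differentiating the dyadic series term by term gives $k^{(l)}(t)=\sum_{j\in\ZZ}2^{j(l+1)}g_j^{(l)}(2^jt)$, whence
\[
|t|^{l+1}\|k^{(l)}(t)\|_{\cL(X,Y)}\leq\sum_{j\in\ZZ}|2^jt|^{l+1}\,\|g_j^{(l)}(2^jt)\|_{\cL(X,Y)}.
\]
Splitting the sum according to whether $|2^jt|\leq1$ or $|2^jt|>1$ and inserting the two regimes of the min-bound turns the right-hand side into two geometric series in $2^jt$, with exponents $l+1>0$ and $l+1-\gamma$. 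Both converge to a bound $\lesssim[a]_{\m_\gamma}$ precisely when $l+1-\gamma<0$, i.e. when $0\leq l\leq\gamma-2$; this is exactly the range in the statement and is the structural reason for the loss of two derivatives. The same estimate shows that the series for $k^{(l)}$ converges uniformly on compact subsets of $\dot\R$ for each $l\leq\gamma-2$, so $k\in\cC^{\gamma-2}(\dot\R;\cL(X,Y))$ and $[k]_{\cK_{\gamma-2}}\lesssim[a]_{\m_\gamma}$.

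Finally I would identify the tempered distribution $\cF^{-1}a$ with the function $k$ on $\dot\R$. Since $(\frak M_\gamma)$ in particular bounds $\|a\|_{L^\infty}$, the partial sums $\sum_{|j|\leq N}a_j$ are dominated by $\|a\|_\infty$ and converge pointwise a.e. to $a$, so by dominated convergence they converge to $a$ in $\cS'(\cL(X,Y))$; applying $\cF^{-1}$ gives $\sum_{|j|\leq N}k_j\to\cF^{-1}a$ in $\cS'$. On the other hand, the $l=0$ case of the estimate above shows $\sum_j k_j=k$ uniformly on compact subsets of $\dot\R$, so testing against any $\phi\in\cD(\dot\R)$ yields $\bracket{\cF^{-1}a}{\phi}=\int_\R k(t)\phi(t)\,\ud t$; that is, $\cF^{-1}a$ coincides with the regular distribution $k$ on $\dot\R$, in the sense of Subsection~\ref{int represent}. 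I expect the main obstacle to be the careful bookkeeping in the two-geometric-series estimate---pinning down the exact admissible range $l\leq\gamma-2$---and, secondarily, keeping the two distinct modes of convergence (global in $\cS'$ versus local-uniform on $\dot\R$) separate, so that the identification is made only away from the origin, where $k$ itself need not be locally integrable.
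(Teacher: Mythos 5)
Your proposal is correct and is essentially the paper's intended argument: the paper omits the proof, stating only that it follows the scalar case in Stein \cite[Proposition 4.4.2(a)]{St93}, and your dyadic Littlewood--Paley decomposition with rescaled pieces $b_j$, integration by parts up to order $\gamma$, and the two geometric series split at $|2^jt|=1$ is precisely that argument transplanted to the $\cL(X,Y)$-valued setting. Your closing care about identifying $\cF^{-1}a$ with $k$ only against test functions in $\cD(\dot\R)$ also matches the interpretation the paper fixes in Subsection \ref{int represent}.
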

The proof of this lemma follows the idea of the proof of its scalar counterpart given in Stein \cite[Proposition 4.4.2(a)]{St93} and is omitted here.
  
In the sequel, we say that $a$ satisfies the $\m_\gamma$-condition and write $a\in \m_\gamma (\cL(X,Y))$, if $a\in\cC^\gamma(\dot \R; \cL(X,Y))$ with $[a]_{\m_\gamma} <\infty$. 
Furthermore, let $\widetilde{\frak M}_\gamma(\cL(X,Y))$ denote a subclass of ${\frak M}_\gamma(\cL(X,Y))$, which consists symbols $a$ such that 
\[
[a]_{\widetilde\m_\gamma}:=\max_{l=0,...,\gamma}\sup_{t\neq 0} \|(1+|t|)^l a^{(l)}(t)\|_{\cL(X,Y)}<\infty \quad \quad  {(\widetilde{\frak M}_\gamma)} 
\]
and $a(0+)=a(0-)$ (equivalently, $a$ has continuous extension at $0$).
Moreover, if $a$ is such that $\cF^{-1}a \in \cC^1(\dot \R; \cL(X,Y))$ with $[\cF^{-1}a]_{\cK_1}<\infty$, then we say that $\cF^{-1}a$ satisfies the (standard) Calder\'{o}n-Zygmund conditions. 

Consequently, by \eqref{si repr}, if $a(D)$ is in $\cL(L^p(X), L^p(Y))$ and $\cF^{-1}a$ satisfies the Calder\'{o}n-Zygmund condition, then $a(D)$ is a Calder\'{o}n-Zygmund operator according to the definition from Section \ref{sec extension}.
The boundedness properties of such operators are the subject of the rest of this section.

\subsection{The boundedness of Fourier multipliers}

Let $\{\psi_j\}_{j\in\N_0}$  be the resolution of the identity on $\R$; see Section \ref{spaces}.  
The proof of Theorem \ref{ext general}  shows that the study of the boundedness of a multiplier $a(D)$ on vector-valued Besov type spaces $B^{s,q}_\E$, defined on the basis of Banach space $\E$, reduces to the study of the uniform boundedness of its dyadic parts $(\psi_j a)(D)$, $j\in \N_0$, on the underlying space $\E(X)$, that is, 
\begin{equation}\label{dyiadic parts}
\sup_{j\in \N_0} \|  (\psi_j a)(D)  \|_{\cL(\E(X),\E(Y))}<\infty.
\end{equation}

In \cite[Proposition 4.5]{Am97} Amann showed that if $a\in \widetilde \m_2(\cL(X,Y))$, then, in particular, the condition \eqref{dyiadic parts} holds for $\E \in \{BUC, \cC_0, L^p; 1\leq p\leq \infty\}$. See also \cite{HyWe06}(and the references therein) for a systematic study of the conditions which imply the boundedness of $a(D)$ on the classical vector-valued Besov spaces (corresponding to $L^p$ spaces).

The following result extends \cite[Proposition 4.5]{Am97} for a larger class of spaces $\E$. We point out that for symbols which arise from particular types of evolution equations, the $\widetilde\m_1$-condition (respectively, the $\m_1$-condition) implies the $\widetilde\m_\gamma$-condition (respectively, the $\m_\gamma$-condition) for every $\gamma \in \N$; see, e.g. Propositions \ref{main 2} and \ref{OM class}.

\begin{proposition}\label{bd multip}
Let $\{\psi_j\}_{j\in \N_0}$ be a dyadic resolution of the identity on $\R$. Let $\Phi$ be a Banach function space such that the Hardy-Littlewood operator $M$ is bounded on $\Phi$ and its dual $\Phi'$.
 Then the following assertions hold.
\begin{itemize}
\item [(i)] 
Assume that $a \in {\widetilde\m}_2 (\cL(X,Y))$. Let $s\in \R$ and $q\in [1,\infty]$. Then the operator $a(D)$ extends to a linear operator $\cT_\Phi$ in $\cL(B^{s,q}_\Phi (X), B^{s,q}_\Phi (Y))$. If $\Psi$ is another Banach function space such that $M$ is bounded on $\Psi$ and $\Psi'$, then the corresponding operator $\cT_\Psi$ is consistent with  $\cT_\Phi$. Moreover, if $q=\infty$, then $\cT_\Phi$ is $\sigma(B^{s,\infty}_\Phi(X), B^{-s,1}_{\Phi'}(X^*))$-to-$\sigma(B^{s,\infty}_\Phi(Y), B^{-s,1}_{\Phi'}(Y^*))$-continuous.

In addition, if $a(D)$ has the extension to an operator in $\cL(\cG(X), \cS'(Y))$, where $\cG(X) \subset \cS'(X)$ with $B^{s,q}_\Phi(X)\subset \cG(X)$, then $a(D)f = \cT_\Phi f$ for every $f\in B^{s,q}_\Phi(X)$. In particular,  $a(D)$ restricts to an operator in  $\cL(B^{s,q}_\Phi (X), B^{s,q}_\Phi (Y))$.

In the case when $s\in \R$ and $q\in (1,\infty)$, the above statements remain true if we replace the Besov space $B^{s,q}_\Phi$ with the Triebel-Lizorkin space $F^{s,q}_\Phi$. 

\item [(ii)]  Assume that  $a \in {\m}_3 (\cL(X,Y))$ and $a(D) \in \cL(L^p(X), L^p(Y))$ for some $p\in (1,\infty)$. Then for every 
\[
\E \in \left\{\,\Phi,\,B^{s,q}_\Phi,\, F^{s,r}_\Phi: s\in \R, q\in [1,\infty], r\in (1,\infty) \right\} 
\]
the operator $a(D)$ extends to an operator $\cT_\E$ in $\cL(\E(X), \E(Y))$.
For $\E = B^{s,\infty}_\Phi$, $s\in \R$, the corresponding operator $\cT_\E$ is $\sigma(B^{s,\infty}_\Phi(X), B^{-s,1}_{\Phi'}(X^*))$-to-$\sigma(B^{s,\infty}_\Phi(Y), B^{-s,1}_{\Phi'}(Y^*))$-continuous. 

Moreover, if $a(D)$ has the extension to an operator in $\cL(\cG(X), \cS'(Y))$, where $\cG(X)$ is a subspace of $\cS'(X)$ such that $\E(X)\subset \cG(X)$, then $a(D)f = \cT_\E f$ for every $f\in \E(X)$. 
\end{itemize} 
\end{proposition}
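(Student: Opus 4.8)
The plan is to reduce both parts to the extrapolation machinery of Theorem~\ref{ext general}, whose weighted hypotheses I will in turn verify through Corollary~\ref{CZ bd}. The common starting point is the commutation relation \eqref{commute}: since $a(D)$ and each $\psi_j(D)$ are Fourier multipliers, for $f\in\cS(X)$ one has $a(D)\psi_j(D)f=\cF^{-1}(a\psi_j\cF f)=\psi_j(D)a(D)f$, so \eqref{commute} holds with $T=a(D)$, and $a(D)$ does map $\cS(X)$ into $\cS'(Y)$. Thus it only remains to produce uniform weighted bounds for the relevant operators and quote Theorem~\ref{ext general}; the distinction between the two parts is precisely which operators must be controlled on $L^p_w$.

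For part (i) I would control the dyadic pieces $T\psi_j(D)=(\psi_j a)(D)$ and show that their kernels $k_j:=\cF^{-1}(\psi_j a)$ form a uniform family of Calder\'on-Zygmund kernels. Because $\psi_j a$ is supported in a dyadic band $\{|t|\sim 2^j\}$ (and in $[-2,2]$ for $j=0$), the $\widetilde\m_2$-bounds on $a$ together with the scaling $\psi_j^{(m)}=O(2^{-jm})$ give, via Leibniz' rule, $[\psi_j a]_{\m_2}\lesssim[a]_{\widetilde\m_2}$ uniformly in $j$; here the weights $(1+|t|)^l$ in the $\widetilde\m_2$-condition are exactly what tame the piece $\psi_0 a$ near the origin. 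Two integrations by parts then yield $|k_j(t)|\lesssim\min(2^j,|t|^{-2}2^{-j})$ and $|k_j'(t)|\lesssim\min(2^{2j},|t|^{-2})$, whence both $\sup_j[k_j]_{\cK_1}<\infty$ and $\sup_j\|k_j\|_{L^1}<\infty$. The uniform $L^1$-bound gives, by Young's inequality, $\sup_j\|(\psi_j a)(D)\|_{\cL(L^p(X),L^p(Y))}<\infty$, so Corollary~\ref{CZ bd} applies to the family $\{(\psi_j a)(D)\}_j$ and delivers exactly the hypothesis of Theorem~\ref{ext general}(i). That theorem then produces $\cT_\Phi$ on $B^{s,q}_\Phi$ (all $s,q$) and on $F^{s,q}_\Phi$ ($q\in(1,\infty)$), with consistency across different $\Phi,\Psi$.

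For part (ii) the kernel estimate is supplied directly by Lemma~\ref{CZ cond}: the hypothesis $a\in\m_3(\cL(X,Y))$ gives $\cF^{-1}a=k\in\cC^1(\dot{\R};\cL(X,Y))$ with $[k]_{\cK_1}<\infty$, and since $a(D)\in\cL(L^p(X),L^p(Y))$ by assumption, $a(D)$ is itself a Calder\'on-Zygmund operator in the sense of Section~\ref{sec extension}. Applying Corollary~\ref{CZ bd} to the single operator $a(D)$ now yields uniform $L^q_w$-bounds for every $q\in(1,\infty)$ and every $A_q$-bounded family of weights, which is precisely the undyadic hypothesis of Theorem~\ref{ext general}(ii). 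Hence $a(D)$ extends to $\cT_\E\in\cL(\E(X),\E(Y))$ for every $\E$ in the stated list, now including the base space $\Phi$ itself, with all extensions mutually consistent. The reason part (i) cannot reach $\Phi$ while part (ii) can is that the base-space bound needs the Calder\'on-Zygmund property of $a(D)$ as a whole, i.e.\ three derivatives plus the a priori $L^p$-boundedness, rather than just the frequency-localized pieces.

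It remains to address the two finer assertions, and here I expect the main difficulty, since $\cS(X)$ is not norm-dense when $q=\infty$. For the weak-$*$ continuity I would identify $\cT_\Phi$ on $B^{s,\infty}_\Phi(X)$ as the adjoint, relative to the pairing \eqref{pairing}, of the analogous bounded operator on $B^{-s,1}_{\Phi'}(Y^*)$ built from the transposed symbol (which inherits the $\widetilde\m_2$-bound), verifying the adjoint relation on the $\sigma(B^{s,\infty}_\Phi,B^{-s,1}_{\Phi'})$-dense subspace $\Phi(X)\cap B^{s,\infty}_\Phi(X)$ of Lemma~\ref{completness}; adjoints are automatically weak-$*$ continuous. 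For consistency with an arbitrary given extension $a(D)\in\cL(\cG(X),\cS'(Y))$ with $\E(X)\subset\cG(X)$, I would first settle it on the base space: for $g\in\Phi(X)$ one has $g\in L^p_w(X)$ for some $w\in A_p$ by Theorem~\ref{RdeF}(i), and since $\cS(X)$ is norm-dense in $L^p_w(X)$, embeds into $\cG(X)$, and $L^p_w(X)\hookrightarrow\cS'(X)$, the relative-$\cS'$ continuity of the given extension forces $a(D)g=\cT_\Phi g$, matching the Rubio de Francia construction. For $q<\infty$ one transfers this along the norm-convergence $\psi(2^{-N}D)f\to f$ of Lemma~\ref{completness}, whose truncations lie in $\Phi(X)$; for $q=\infty$ one replaces norm convergence by $\sigma(B^{s,\infty}_\Phi,B^{-s,1}_{\Phi'})$-convergence and uses the weak-$*$ continuity just established. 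Reconciling the Rubio de Francia extension with an abstractly given one in the absence of norm density is the step requiring the most care.
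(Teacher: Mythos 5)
Your proposal is correct, and its overall architecture coincides with the paper's own proof: reduce both parts to Theorem~\ref{ext general} after checking \eqref{commute}, treat part (ii) by combining Lemma~\ref{CZ cond} with the $L^p$-boundedness hypothesis so that $a(D)$ is a Calder\'on--Zygmund operator and Corollary~\ref{CZ bd} supplies the uniform weighted bounds, obtain the weak-$*$ continuity for $q=\infty$ from the transposed symbol (the paper identifies $\cT_{l,\Phi}^*$ with the extension of $(a^*\psi_l)(D)$ on $\Phi'(Y^*)$, which is your adjoint argument), and settle consistency with a given extension on $\cG(X)$ exactly as you do: Theorem~\ref{RdeF}(i) places $g\in\Phi(X)$ in some $L^p_w(X)$ where $\cS(X)$ is norm-dense, and the truncations $\psi(2^{-N}D)f$ of Lemma~\ref{completness} transfer this to $B^{s,q}_\Phi$, with norm convergence replaced by $\sigma(B^{s,\infty}_\Phi(X),B^{-s,1}_{\Phi'}(X^*))$-convergence when $q=\infty$. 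The one place you genuinely diverge is the verification of the weighted hypothesis in part (i): you establish uniform Calder\'on--Zygmund data $\sup_j[k_j]_{\cK_1}<\infty$ and $\sup_j\|k_j\|_{L^1}<\infty$ for $k_j=\cF^{-1}(\psi_j a)$ and route through Corollary~\ref{CZ bd}, whereas the paper proves the sharper pointwise domination $\sup_j|(\cF^{-1}(\psi_j a)\ast f)(t)|_Y\lesssim[a]_{\widetilde\m_2}\,M(|f|_X)(t)$, using the rescaling $\cF^{-1}(\psi_j a)=2^j\cF^{-1}(\eta\,a(2^j\cdot))(2^j\cdot)$ and the radial majorants $h_j(t)=2^j(1+2^{2j}t^2)^{-1}$, and then invokes Muckenhoupt's theorem directly. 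Your route leans on the constant-tracking in the weighted Calder\'on--Zygmund theory behind Corollary~\ref{CZ bd}; the paper's is more self-contained and only needs the weighted boundedness of $M$, and the paper itself remarks at the end of Corollary~\ref{CZ bd} that the two verifications are interchangeable, so nothing is lost. One detail you should tighten: for $j=0$ the ``two integrations by parts'' must be performed distributionally, because under the $\widetilde\m_2$-condition $a$ is continuous at the origin but $a'$ may jump there, so $\partial(\psi a)=(\psi a)'$ while $\partial^2(\psi a)=(\psi a)''+[a'(0+)-a'(0-)]\delta_0$; the point mass contributes only a bounded constant to $t^2k_0(t)$, so your bound $|k_0(t)|\lesssim\min(1,|t|^{-2})$ survives, but your appeal to the weights $(1+|t|)^l$ alone does not record this term, which the paper writes out explicitly.
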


\begin{proof} 
We prove that the operator $T:=a(D)_{|\cS(X)}$ satisfies the assumptions of Theorem \ref{ext general}$(i)$, i.e. we show that for every $\cW\in A_2(\R)$ with $\sup_{w\in \cW} [w]_{A_2} <\infty$ 
\begin{equation}\label{RdeF est R}
\sup_{w\in \cW} \sup_{j\in \N_0} \| (\phi_j a) (D)\|_{\cL(L^2_w(\R; X), L^2_w(\R; Y))} < \infty. 
\end{equation} 
First note that for each $j\geq 1$,
\[
\cF^{-1}(\psi_j a) = 2^j \cF^{-1}(\eta a(2^j\cdot))(2^j\cdot), 
\] where $\eta (t):=\psi(t)- \psi(2t)$ $(t\in \R)$. By Leibniz' rule we get that there exists a constant $C$ such that for every $l\in \{0,1,2\}$, $j\in \N$ and $t\in \R$,
\[
\left\|\left( \eta(t) a(2^j t) \right)^{(l)}\right\|_{\cL(Y,X)} 
\leq C [a]_{\m_2} \chi_{I}(t), 
\]
where $\chi_I$ stands for the characteristic function of the set $I:=\{\frac{1}{2}\leq |t| \leq 2 \}$. Therefore, for every $l\in \{0,1,2\}$ and $j\in \N$ we get that 
\[
\|(\eta\, a(2^j\cdot))^{(l)}\|_{L^1(\R; \cL(X,Y))}\leq 4 C [a]_{\m_2}. 
\] 
Since for every $l\in \{0,1,2\}$, $j\in \N$ and $t\in \R$
\[
\left\|\cF^{-1}\left( (\eta\, a(2^{j}\cdot))^{(l)} \right)(t)\right\|_{\cL(X,Y)} = \left\| t^l \cF^{-1}\left( \eta a(2^{j}\cdot) \right)\right\|_{\cL(X,Y)} \leq 4C [a]_{\m_2}, 
\]
\[
 \left\|\cF^{-1}\left( \eta\, a(2^{j}\cdot) \right)(t)\right\|_{\cL(X,Y)} \leq 8 C [a]_{\m_2} \frac{1}{1+t^2}. 
\]
Consequently, 
\[
 \left\|\cF^{-1}(\phi_j a)(t)\right\|_{\cL(X,Y)} \leq 8 C [a]_{\m_2} \frac{2^j}{1+2^{2j}t^2} \qquad  (t\in \R, j\in \N).
\]
Note that $h_j(t):=\frac{2^j}{1+2^{2j}t^2}$ $(t\in \R)$, $j\in \N$, are even, (radially) decreasing function such that $\|h_j\|_{L^1(\R)} = \pi$. By \cite[Chapter II, (17) p.57]{St93}, for each $f\in \cS(X)$ and $t\in \R$ we have that 
\[
\sup_{j\in \N} \left| (\cF^{-1}(\phi_j a)\ast f)(t) \right|_Y \leq 8C[a]_{\m_2}  \sup_{j\in \N} (h_j \ast |f|_{X}) (t) \leq 8\pi C[a]_{\m_2}  M_\R (|f|_X)(t). 
\]

For $j=0$, note that if $a\in \widetilde\m^2(\R;\cL(X,Y))$, then 
\[
\partial (\psi a) = (\psi a)' + [a(0+) - a(0-)] \delta_0 = (\psi a)'
\]
and 
 \[
\partial^2 (\psi a) = (\psi a)'' + [a'(0+) - a'(0-)] \delta_0. 
\]
Here, $\delta_0$ stands for Dirac's measure at $0$. Hence, respectively, taking  Fourier's transform, we have that
\[
i t \cF^{-1}(\psi a)(t) = \cF^{-1}(\partial^2 (\psi a)) (t) = \cF^{-1}\left((\psi a)'\right)
\]
\[
- t^2 \cF^{-1}(\psi a)(t) = \cF^{-1}(\partial (\psi a)) (t) = \cF^{-1}\left((\psi a)''\right) (t) + [a'(0+) - a'(0-)] \qquad (t\in \R). 
\]
Thus, applying arguments which were presented above for the function $\psi_j a$ $(j\in \N)$ to the function $\psi m$, we infer that for some constant $C$ and every $f\in \cS(X)$
\[
|\cF^{-1}(\psi a)\ast f(t)|_X \leq C [a]_{\widetilde \m_2} M(|f|_X)(t) \qquad (t\in \R). 
\] 
Therefore, Muckenhoupt's theorem (see also \cite{Bu93}) gives \eqref{RdeF est R}. 

By Theorem \ref{ext general}$(i)$, $T$ extends to an operator $\cT_\Phi$ in $\cL(B^{s,q}_\Phi(X), B^{s,q}_\Phi(X))$.  
The statement on the continuity of $\cT_\Phi$ if $q=\infty$ follows from the fact that for the adjoint operator of $\cT_{l,\Phi}\in \cL(\Phi(X), \Phi(Y))$, $\cT_{l,\Phi}^*  g \in \Phi'(X^*)$ for all $g\in \Phi'(Y^*)$. Indeed, one can easily check that $\cT_{l,\Phi}^*$ agrees on $\Phi'(Y^*)\subset [\Phi(Y)]^*$ with the extension of the operator $(a^*\psi_l)(D)$ to an operator in $\cL(\Phi'(Y^*), \Phi'(X^*))$, which is obtained by means of Theorem \ref{ext general}. Here,  $a^*(t):= a(t)^* \in \cL(Y^*, X^*)$, $t\neq 0$. It proves the first statement of $(i)$.

For the second one, assume that $a(D)$ possesses a continuous extension on a subset $\cG(X)\subset \cS'(X)$ such that $B^{s,q}_\Phi(X)\subset \cG(X)$. By Lemma \ref{completness} and \eqref{resol on R}, $B^{s,q}_\Phi (X) \hookrightarrow \cS'(X)$ and for every $f\in B^{s,q}_\Phi(X)$, $\sum_{j=0}^N \psi_j(D) f \rightarrow f$ as $N\rightarrow \infty$ both in $\cS'(X)$ and $B^{s,q}_\Phi(X)$ (when $q=\infty$ the convergence holds in $\sigma(B^{s,\infty}_{\Phi}(X), B^{-s,1}_{\Phi'}(X^*))$-topology of $B^{s,\infty}_{\Phi}(X)$), we get $a(D)f=\cT_\Phi f$ for all $f\in B^{s,q}_\Phi(X)$.

The proof of the last statement of $(i)$ regarding the Triebel-Lizorkin spaces is based on the corresponding statement of Theorem \ref{ext general}$(i)$ and mimics arguments presented above. This completes the proof of the part $(i)$. 

For the proof of $(ii)$, first note that $a(D)$ is a Calder\'{o}n-Zygmund operator; see  Lemma \ref{CZ cond} and considerations made in Subsection \ref{int represent}. Thus, the proof of this part relies on Theorem \ref{ext general}$(ii)$ and uses similar arguments to those presented above. 
\end{proof}

\begin{remark}\label{R bound}
(a) The additional statements of the points $(i)$ and $(ii)$ of Proposition \ref{bd multip} are mainly address to the case when  $a\in \cO_M(\cL(X,Y))$. Then, $a(D)$ has a {\it canonical extension} to an operator in $\cL(\cS'(X), \cS'(Y))$; see, e.g. Proposition \ref{OM class} (cf. also \cite[Problems 3.2 and 3.3]{HyWe06}).

(b) We conclude this section with the condition that guarantees a multiplier $a(D)$ is in $\cL(L^p(X), L^p(Y))$ for some $p\in (1,\infty)$. We refer the reader, e.g. to \cite[Section 1]{HyWe07} for a short survey on the theory of operator-valued multipliers on $L^p$ spaces, where further references to seminal papers in this area, including \cite{HyWe07}, can be found. 
Here, we just mentioned Weis' result \cite{We01} who first generalized the Mihlin theorem for operator-valued symbols by requiring $R$-boundedness instead of norm boundedness in Mihlin's condition $(\m_1)$.
Recall that, if $X$ and $Y$ are $U\!M\!D$ Banach spaces and a symbol $a\in L^\infty(\cL(X,Y))\cap \cC^1(\dot \R;\cL(X,Y))$ satisfies  
\[
R(\{ a(t), t a'(t): t\neq 0 \}) <\infty, \qquad \quad (R\m_1)
\]
then $a(D)\in\cL(L^p(X), L^p(Y))$ for every $p\in (1,\infty)$. 
We refer the reader, e.g.  to \cite{KuWe04} or \cite{DeHiPr03a}, for the background on $R$-boundedness and $U\!M\!D$-spaces.

Recently, in \cite{FaHyLi20} was proved that the same conditions implies also that $a(D)\in \cL(L^p_w(X), L^p_w(Y))$ for every $w\in A_p$ and every $p\in (1,\infty)$; see  
\cite[Theorem 3.5(a)]{FaHyLi20}. Moreover, it is readily seen from the proof that for every $\cW\subset A_p$ with $\sup_{w\in \cW} [w]_{A_p}<\infty$ 
\[
\sup_{w\in \cW}\|a(D)\|_{\cL(L^p_w(X), L^p_w(Y))} < \infty.
\]
Therefore, $T:=a(D)_{|\cS(X)}$ satisfies the assumptions of Theorem \ref{ext general}$(ii)$. Consequently, if $X$ and $Y$ have $U\!M\!D$ property, then it allows us to relax the assumptions of Theorem \ref{bd multip}$(ii)$ to get the same conclusion stated therein. 

(c)  For multiplier symbols arising in the study of evolution equations (e.g., see \eqref{P no c} below), the $R$-boundedness of their ranges usually implies that they satisfy the $R\m_1$-condition stated above; see the last statement of Proposition \ref{main 2} and Proposition \ref{OM class}. Recall that, by Cl\'ement and Pr\"uss \cite{ClPr01}, the $R$-boundedness of the range of a symbol $a$ is also the necessary condition for $a(D)$ to be in $\cL(L^p(X), L^p(Y))$. Therefore,  if the underlying Banach space $X$ has $U\!M\!D$ property, then it leads to the characterisation of the $L^p$-maximal regularity of such equations in the term of $R$-boundedness; see, e.g. \cite[Theorem 4.1]{ApKe20}. 
\end{remark}

\section{Maximal regularity of integro-differential equations}\label{app}

In this section, we illustrate our preceding general results by an abstract integro-differential equation with a general convolution term; see \eqref{P} below.
We discuss the particular forms of this equation in the following section. 

We start with a preliminary observation. Let $A$, $B$ and $P$ denote densely defined, closed, linear operators on a Banach space $X$.  
Set $D_A$, $D_B$ and $D_P$ for the domains of these operators equipped with the corresponding graph norms. Since $A\in \cL(D_A, X)$, the {\it evaluation} of $A$ on $\cS'(D_A)$ gives an operator $\cA$ in $\cL(\cS'(D_A), \cS'(X))$, i.e.,  $(\cA u)(\phi):=A(u(\phi))$ for every $u\in \cS'(D_A)$ and $\phi\in \cS$. The symbols $\cB$ and $\cP$ have the analogous meaning below. Note that the distributional derivative $\partial$ commutes with such extensions, i.e. $\partial \cA u  = \cA \partial u$ for all $u\in \cS'(D_A)$, and similarly for $\cB$ and $\cP$. 

Moreover, let $c\in \cS'(\cL(Z,X))$, where $Z\subset X$ denotes a Banach space.  
Consider the following problem 
\begin{equation}\label{P}
 \partial \cP \partial u + \cB \partial u + \cA u + c\ast u = f \quad \quad \textrm{ in } \cS'(X).
\end{equation}
For a given $f\in \cS'(X)$, by a {\it distributional solution} of \eqref{P} we mean a distribution $u\in \cS'(D_A)$ such that $\partial u \in \cS'(D_B) \cap \cS'(D_P)$ and the convolution of $c$ with $u$ can be (at least formally) well-defined in $\cS'(X)$.  
Below we assume that $c$ is the inverse Fourier transform of a given function $\hat c$ in $L^\infty(\cL(Z,X))$, i.e. $c = \cF^{-1}\hat c$, and define $c\ast u$ by $\hat c(D) u$. 

Moreover, we restrict our study of \eqref{P} to the case when $D_A \hookrightarrow  D_B, D_P$ and $Z= D_A$. The more general case of the main results of this section, when $Z\neq D_A$, can be proved easily following the presented proofs. Let $Y:= D_A$. Since $A,B,P \in \cL(Y,X)$, we get that  $\cA, \cB, \cP \in \cL(\cS'(Y), \cS'(X))$. Since $\partial \cP \partial u = \partial(\partial \cP u)$ and $\partial \cB u = \cB \partial u$ for all $u\in \cS'(Y)$, the solvability of \eqref{P} in the sense of distributional solutions is equivalent to that of
\begin{equation}\label{PP}
 \partial(\partial  \cP u) +  \partial \cB u + \cA u + c\ast u = f \quad \quad \textrm{ in } \cS'(X).
\end{equation}
Of course, it is not the case for the study of the well-posedness of \eqref{P} and \eqref{PP} in the sense of strong solutions. It is emphasized in the proof of Theorem \ref{main for itegro-dff}; see also Lemma \ref{strong solution} below. We call a function $u$ the {\it strong solution} of \eqref{P} if 
\[
u\in L^{1}_{loc}(D_A),\quad c\ast u \in L^1_{loc}(X),\quad u\in W^{1,1}_{loc}(D_B\cap D_P),\quad Pu'\in W^{1,1}_{loc}(X)
\] and  
 \begin{equation}\label{P strong}
( P  u')'(t) +  B u'(t) + A u(t) + (c\ast u)(t) = f(t) \quad \quad \textrm{ a.e. }t\in \R.
\end{equation}
Equivalently, $u$ is a strong solution if $u\in W^{1,1}_{loc}(X)$ and each summand on the left side of \eqref{P} is a regular distribution (cf. Lemma \ref{strong solution} below).

We start with an auxiliary result on the regularity of the kernels of multiplier operators involved in the study of \eqref{P}. 

\begin{proposition}\label{main 2}
Let $A, B, P$ and $c$ be as above. 
Assume that for every $t\in \R\setminus \{0\}$ the operator
\begin{equation}\label{fun c}
b(t):= -t^2 P + it B + A + \hat c(t) \in \cL(Y, X).
\end{equation}
is invertible. Set 
\begin{equation}\label{functions}
a(t) := b(t)^{-1}, \quad a_0(t):= t B a(t),\quad a_1(t):=t^2 P a(t) \quad (t\neq 0).
\end{equation}

Suppose that $a\in L^\infty(\cL(X,Y))$ and $a_j\in L^\infty (\cL(X))$ for $j=0,1$. 

Then, for every $\gamma \in \N$, if $\hat c \in \widetilde{\frak{M}}_\gamma (\cL(Y,X))$ (respectively, $\hat c \in {\frak{M}}_\gamma (\cL(Y,X))$), then $a \in \widetilde{\frak{M}}_\gamma(\cL(X,Y))$  and  $a_0, a_1 \in \widetilde{\frak{M}}_\gamma(\cL(X))$)(respectively, $a \in {\frak{M}}_\gamma(\cL(X,Y))$ and  $a_0, a_1 \in {\frak{M}}_\gamma(\cL(X))$).

In addition, if the ranges of the functions $a$ and $a_0, a_1$ are $R$-bounded, then if $\hat c$ satisfies the $R\m_1$-condition, then $a$ and $a_0$, $a_1$ do.

\end{proposition}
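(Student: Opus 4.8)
The plan is to reduce the whole proposition to a single closed first‑order differential identity for the triple $(a,a_0,a_1)$ and then to bootstrap the order of differentiability by an induction. First I would record the elementary formula for the derivative of an operator inverse: since $b(t)a(t)=I$ on $X$ and $a(t)b(t)=I$ on $Y$, one has $a'=-a\,b'\,a$, where $b'(t)=-2tP+iB+\hat c'(t)$ — note that the constant term $A$ has disappeared. Substituting $Pa=t^{-2}a_1$ and $Ba=t^{-1}a_0$, which is exactly what the definitions $a_0=tBa$ and $a_1=t^2Pa$ are designed to permit, every occurrence of $P$ or $B$ is reabsorbed into $a_0,a_1$, and I obtain the closed system
\begin{align*}
t a' &= 2a a_1 - i a a_0 - a\,(t\hat c')\,a,\\
t a_0' &= a_0 + 2 a_0 a_1 - i a_0^2 - a_0\,(t\hat c')\,a,\\
t a_1' &= 2a_1 + 2 a_1^2 - i a_1 a_0 - a_1\,(t\hat c')\,a.
\end{align*}
The decisive point is that the right‑hand sides are finite sums of products of members of $\{a,a_0,a_1,\,t\hat c'\}$ only; no uncontrolled quantity (a bare $Pa$, or a factor $Ba^{(n)}$ with $n\ge1$) is ever produced. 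Verifying this closedness is the crux of the argument and the step I expect to demand the most care.

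Next I would isolate two elementary facts about the classes $\m_\gamma$ (and, with the same proofs, $\widetilde{\m}_\gamma$). \emph{Closure under products:} by the Leibniz rule and the homogeneity of the weight $t^l$, if $F,G\in\m_\gamma$ have composable values then $FG\in\m_\gamma$ with $[FG]_{\m_\gamma}\lesssim_\gamma[F]_{\m_\gamma}[G]_{\m_\gamma}$. \emph{Bootstrapping:} if $u\in C^{k+1}(\dot\R)$ satisfies $u\in\m_k$ and $tu'\in\m_k$, then $u\in\m_{k+1}$, since $t^{k+1}u^{(k+1)}=t^k\big((tu')^{(k)}-k\,u^{(k)}\big)$ with both terms bounded, and the lower‑order bounds come from $u\in\m_k$. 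With these in hand the $\m_\gamma$‑statement follows by induction on $k=0,1,\dots,\gamma$. The base case $k=0$ is precisely the standing hypothesis $a,a_0,a_1\in L^\infty$. For the step, assuming $a,a_0,a_1\in\m_k$ and using $t\hat c'\in\m_{\gamma-1}\subseteq\m_k$ (a consequence of $\hat c\in\m_\gamma$), product‑closure applied to the three identities gives $ta',ta_0',ta_1'\in\m_k$, whence the bootstrapping lemma yields $a,a_0,a_1\in\m_{k+1}$. The regularity $a\in C^\gamma(\dot\R)$ needed to run the induction follows from $\hat c\in C^\gamma(\dot\R)$ and the fact that the inverse of a $C^\gamma$ invertible‑operator‑valued function is again $C^\gamma$.

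For the $\widetilde{\m}_\gamma$‑case I would split $\dot\R$ into $\{|t|\ge1\}$ and $\{0<|t|\le1\}$. On $\{|t|\ge1\}$ the weights $(1+|t|)^l$ and $|t|^l$ are comparable, so the $\m_\gamma$‑bounds just proved (available because $\widetilde{\m}_\gamma\subseteq\m_\gamma$) give what is needed. Near $t=0$ the bootstrapping lemma is useless, as $ta'$ only controls $t\,u^{(k+1)}$; instead I would argue directly that $b$ extends continuously to $t=0$ with $b(0)=A+\hat c(0)$ (using the continuity of $\hat c$ at $0$ built into $\widetilde{\m}_\gamma$), and that $b(0)$ is invertible: from $b(t)\to b(0)$ in norm together with $\sup_t\|a(t)\|<\infty$, the identity $b(0)=b(t)\big(I+a(t)(b(0)-b(t))\big)$ exhibits $b(0)$ as a product of invertibles for small $t$. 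Consequently $a=b^{-1}$ is $C^\gamma$ near $0$ with bounded derivatives and $a(0+)=a(0-)=b(0)^{-1}$; then $a_0=tBa$ and $a_1=t^2Pa$ inherit the same, with $a_0(0)=a_1(0)=0$. Combining the two regimes yields finite $\widetilde{\m}_\gamma$‑seminorms and the required continuity at $0$.

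Finally, the $R\m_1$‑addendum is the $\gamma=1$ instance of the same identities. Reading off $ta'=2aa_1-iaa_0-a(t\hat c')a$ and its analogues, the family $\{ta'(t):t\neq0\}$ is built from $\{a(t)\}$, $\{a_0(t)\}$, $\{a_1(t)\}$ (all $R$‑bounded by hypothesis) and $\{t\hat c'(t)\}$ ($R$‑bounded by the $R\m_1$‑condition on $\hat c$) through finitely many products and sums. Since $R$‑boundedness is stable under such operations, with $R(\{S_iT_i\})\le R(\{S_i\})\,R(\{T_i\})$, the families $\{ta'(t)\}$, $\{ta_0'(t)\}$, $\{ta_1'(t)\}$ are $R$‑bounded; together with the $R$‑boundedness of $\{a\},\{a_0\},\{a_1\}$ this is exactly the $R\m_1$‑condition for $a,a_0,a_1$.
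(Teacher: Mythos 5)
Your proposal is correct, and it takes a genuinely different route from the paper. The paper works directly with the inversion identity $a'=-a\,b'\,a$ and its higher-order analogue $a^{(\alpha)}=\sum_{k+l+m=\alpha,\,l\neq 0} a^{(k)}b^{(l)}a^{(m)}$, treating $\gamma=1,2,3$ by hand (where $b'=-2tP+iB+\hat c'$, $b''=-2P+\hat c''$, $b^{(l)}=\hat c^{(l)}$ for $l\geq 3$) and then inducting for $\gamma>3$, re-expanding the mixed terms $b^{(l)}a^{(m)}$ once more to absorb the powers of $t$; the statement for $a_0,a_1$ is then asserted with details omitted. You instead eliminate $P$ and $B$ at the outset by substituting $Pa=t^{-2}a_1$, $Ba=t^{-1}a_0$ into the same first-order identity, obtaining the closed system $ta'=2aa_1-iaa_0-a(t\hat c')a$, $ta_0'=a_0+2a_0a_1-ia_0^2-a_0(t\hat c')a$, $ta_1'=2a_1+2a_1^2-ia_1a_0-a_1(t\hat c')a$ (I checked the coefficients and the typing of all compositions; they are correct), and you then run a single induction on $k$ using two clean lemmas: product-closure of $\m_k$ and the bootstrap ``$u\in\m_k$, $tu'\in\m_k\Rightarrow u\in\m_{k+1}$'' via $t^{k+1}u^{(k+1)}=t^k\bigl((tu')^{(k)}-k\,u^{(k)}\bigr)$, together with $t\hat c'\in\m_{\gamma-1}$. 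What this buys: no separate low-order cases, the three functions $a,a_0,a_1$ are handled simultaneously and explicitly (closing the gap the paper waves away), and the $R\m_1$ addendum drops out of the very same identities by stability of $R$-bounds under sums and products, exactly as in the paper's final remark. Your treatment of the $\widetilde\m_\gamma$ case is also sound and slightly more complete than the paper's one-line assertion that ``$a$ has a continuous extension at $0$ if $\hat c$ does'': your observation that $a\in L^\infty$ forces $b(0)=A+\hat c(0)$ to be invertible, via $b(0)=b(t)\bigl(I+a(t)(b(0)-b(t))\bigr)$ for small $t$, is a worthwhile supplement. Two cosmetic caveats: near $0$ you should say $a$ is $C^\gamma$ on a \emph{punctured} neighbourhood with bounded derivatives, since $\widetilde\m_\gamma$ only gives $\hat c\in\cC^\gamma(\dot\R)$ with continuity (not differentiability) at $0$ — this is immaterial because the $\widetilde\m_\gamma$-seminorm only involves $t\neq 0$; and the boundedness of $a^{(l)}$ near $0$ there is obtained from the inversion expansion (the paper's global device), not from your bootstrap, which, as you correctly note, degenerates at $t=0$.
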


\begin{proof} Note that $b \in \cC^\gamma (\dot \R; \cL(Y,X))$ and, since the inversion map is analytic, $a\in \cC^\gamma (\dot \R; \cL(X,Y))$. Moreover, $a$ has a continuous extension at $0$ if $\hat c$ does.
Before we proceed by induction, we need to consider the cases when $\gamma = 1,2,3$ separately.

In the case when $\gamma =1$, since $b(t)a(t) = I$, by Leibniz' rule, for every $t\in \dot \R$ we get 
\begin{equation}\label{est 1}
-a'(t) = a(t) b'(t) a(t) \qquad (\textrm{in } \cL(X,Y)),
\end{equation}
 where
\[ b'(t) = -2t P + iB + \hat c '(t). \]
By our assumptions we get $ b'a, (\cdot)b'a \in L^\infty(\cL(X))$, which shows that  $a \in \widetilde\m_1(\cL(X,Y))$. 

Similarly,  when $\gamma = 2$, we get 
\begin{equation}\label{est 2}
-a''(t) = a'(t) b'(t) a(t) + a(t) b''(t) a(t) + a(t) b'(t) a'(t),
\end{equation}
where 
\[
b''(t) = - 2P + \hat c'' (t). 
\]
Therefore, our assumptions yield directly that $b''a, (\cdot)^2 b'' a \in L^\infty(\cL(X))$ and, by the step for $\gamma = 1$, $b'a' \in L^\infty(\cL(X))$. Consequently,    
$a \in \widetilde\m_2(\cL(X,Y))$.

For $\gamma =3$, note that  
\begin{align}\label{est 3}
-a'''(t)  = & a''(t) b'(t) a(t) + a'(t) b''(t) a(t) + a'(t) b'(t) a'(t)\\
\nonumber & + a'(t) b''(t) a(t) + a(t) b'''(t) a(t) + a(t) b''(t) a'(t) \\
\nonumber & +  a'(t) b'(t) a'(t) +  a(t) b''(t) a(t) +  a(t) b'(t) a''(t),    
\end{align}
where
\[
b'''(t) = \hat c '''(t).
\]
Therefore, the steps for $\gamma =1$ and $\gamma =2$ show that $a \in \widetilde\m_3(\cL(X,Y))$.

Now we can proceed by induction for $\gamma > 3$. Fix $3\leq \alpha \leq \gamma$. Then 
\[
a^{(\alpha)} = \sum_{0 \leq k,l,m  \leq \alpha,\, l\neq 0 \atop k+l+m = \alpha }
a^{(k)}b^{(l)}a^{(m)} \qquad (\textrm{on } \dot \R).
\]
By the induction step, for every $k=0, ..., \alpha - 1$, the function $(\cdot)^k a^{(k)}$ is in $L^\infty(\cL(X,Y))$. Moreover, in the case when $k\neq 0$, since $l+m\leq \alpha -1$, by the induction, we have that the functions $(\cdot)^{l+m}b^{(l)}a^{(m)}\in L^\infty(\cL(X))$. 
Therefore, consider the case of $k=0$, that is, $l+m = \alpha$. If $l=\alpha$, then 
$b^{(\alpha)}=\hat c^{(\alpha)}$, thus $(\cdot)^{\alpha} b^{(\alpha)}a \in L^\infty(\cL(X))$. 
In the case $l < \alpha$, i.e., $1\leq m \leq \alpha-1$, we have
\[
b^{(l)} a^{(m)} = \sum_{0 \leq m_1,m_2,m_3  \leq m, \, m_2\neq 0 \atop  m_1+m_2+m_3= m }
b^{(l)} a^{(m_1)}b^{(m_2)}a^{(m_3)} \qquad (\textrm{on } \dot \R).
\]
Since $l+m_1\leq l+m-m_2\leq \alpha-1$ and $m_2+m_3\leq m \leq \alpha-1$, by the induction step, we get $(\cdot)^{l+m}b^{(l)} a^{(m)} \in L^\infty(\cL(X))$. It shows that $a\in \widetilde\m_\gamma(X,Y)$. 
Applying the formulas obtained for the function $a$, one can easily get desired statement for $a_0$ and $a_1$. We omit details.

The proof in the case of the ${\frak{M}}_\gamma$-condition follows the one given above.

For the last statement, recall that directly from the definition of $R$-boundedness,  for any Banach spaces $X,Y, Z$, if $\tau , \sigma \subset \cL(X,Y)$ and $\rho \subset \cL(Y,Z)$ are $R$-bounded, then the families $\tau + \sigma$ and $\tau \circ \rho$ are $R$-bounded (see e.g. \cite[Fact 2.8, p. 88]{KuWe04}). Therefore, 
the proof of the last statement mimics exactly the argument given already for the usual, norm boundedness.
 
\end{proof}

 The following lemmas allow to derive the strong solutions of the problem \eqref{P}.
 
\begin{lemma}\label{strong solution} 
Let $X$ and $Y$ be Banach spaces such that $Y\hookrightarrow X$. Let $C$ be a closed, linear operator on $X$ such that $Y \hookrightarrow D_C$. 

Let $u \in L^1_{loc}(Y)$ be such that $\partial u$ and $\partial Cu$ are regular $X$-valued distribution, i.e. $\partial u, \partial Cu \in L^1_{loc}(X)$. 

Then,  for almost all $t\in \R$ the strong derivative $u'(t)$ of $u$ at $t$ exists in $X$, $u'(t) \in D_C$ and $Cu'(t) = (\partial C u)(t)$. 
\end{lemma}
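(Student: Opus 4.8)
The plan is to reduce the whole statement to a single claim about the distributional derivatives. Write $g := \partial u$ and $k := \partial Cu$, both in $L^1_{loc}(X)$ by hypothesis. Since $Y \hookrightarrow D_C$ gives $C \in \cL(Y,X)$, from $u \in L^1_{loc}(Y)$ we get $u(t) \in Y \subset D_C$ for a.e.\ $t$ and $Cu \in L^1_{loc}(X)$. The asserted conclusion $u'(t) \in D_C$, $Cu'(t) = k(t)$ a.e.\ will follow once we know (a) that $u$ has an absolutely continuous $X$-valued representative whose classical derivative equals $g$ a.e., and (b) that $g(t) \in D_C$ with $Cg(t) = k(t)$ a.e.

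First I would invoke the vector-valued fundamental theorem of calculus: a function in $L^1_{loc}(X)$ whose distributional derivative lies in $L^1_{loc}(X)$ agrees a.e.\ with an absolutely continuous function, whose classical derivative exists a.e.\ and equals the distributional one. Applying this to $u$ and to $Cu$ yields absolutely continuous representatives $\tilde u$ and $\widetilde{Cu}$ with $\tilde u' = g$ and $\widetilde{Cu}' = k$ a.e.; fixing a base point $t_0$, one has $\tilde u(t) - \tilde u(t_0) = \int_{t_0}^t g(s)\,\ud s$ and $\widetilde{Cu}(t) - \widetilde{Cu}(t_0) = \int_{t_0}^t k(s)\,\ud s$.

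Next I would exploit the closedness of $C$. On a full-measure set $E$ we have $\tilde u(t) \in D_C$ and $C\tilde u(t) = \widetilde{Cu}(t)$ (this is where $u(t)\in Y \subset D_C$ a.e.\ enters). Choosing $t_0 \in E$, for every $t \in E$ the primitive $G(t) := \int_{t_0}^t g(s)\,\ud s = \tilde u(t) - \tilde u(t_0)$ lies in $D_C$ and $CG(t) = \int_{t_0}^t k(s)\,\ud s =: K(t)$. Since $G$ and $K$ are continuous, $C$ is closed, and $E$ is dense, this identity upgrades to hold for \emph{every} $t \in \R$: $G(t) \in D_C$ and $CG(t) = K(t)$ for all $t$.

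Finally I would differentiate. At every common Lebesgue point $t$ of $g$ and $k$ (hence a.e.\ $t$) the difference quotients satisfy $h^{-1}\bigl(G(t+h)-G(t)\bigr) \to g(t)$ in $X$, while $h^{-1}\bigl(CG(t+h)-CG(t)\bigr) = h^{-1}\bigl(K(t+h)-K(t)\bigr) \to k(t)$ in $X$; since $h^{-1}\bigl(G(t+h)-G(t)\bigr) \in D_C$, the closedness of $C$ forces $g(t) \in D_C$ and $Cg(t) = k(t)$. Combined with $u'(t) = g(t)$ a.e., this is exactly the assertion. The main obstacle is precisely the mismatch between the a.e.\ pointwise relation $C\tilde u = \widetilde{Cu}$ and the difference quotients needed for differentiation, which involve the individual points $t$ and $t+h$; the resolution is to first transfer the relation to the everywhere-defined primitives $G, K$ by closedness of $C$, and only then pass to the limit, again invoking closedness.
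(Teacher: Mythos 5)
Your proof is correct and in essence takes the same route as the paper: both reduce the lemma to the integrated identity $C\bigl(\int_s^t u'(r)\,\ud r\bigr) = \int_s^t (\partial Cu)(r)\,\ud r$ and then conclude by Lebesgue's differentiation theorem together with the closedness of $C$ applied to the difference quotients. The only (harmless) difference is how that identity is obtained: the paper gets it in one line by testing $\partial Cu = \cC\,\partial u$ against test functions and moving $C$ through the $Y$-valued Bochner integral, whereas you derive it from the a.e.\ relation $C\tilde u = \widetilde{Cu}$ via absolutely continuous representatives, primitives, and a density/closedness upgrade.
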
 
The proof of this lemma follows easily from the closedness of $C$, which yields
\begin{align*}
\int_\R^X (\partial Cu)(t) \phi(t) \ud t & =  (\partial Cu)(\phi) =  (\cC \partial u)(\phi) =  - C \left( \int^Y_\R u(t) \phi'(t) \ud t\right) =\\
& = C \left( \int^X_\R u'(t) \phi(t) \ud t\right) \quad \textrm{for all }\phi \in \cS,
\end{align*}
and Lebesgue's differential theorem.  Here, we use the symbol $\int^X_\R$ to indicate in  which space the integration over $\R$ is considered. Note that $\int^X_\R u'(t) \phi(t) \ud t \in Y$ for all $\phi \in \cS$. 

The following result is a counterpart of the characterisation of the classical Besov spaces ($X=\C$, $\Phi= L^p$) in terms of distributional derivatives; see \cite[Theorem 2.3.8]{Tr83}. 

\begin{lemma}\label{lem equiv norms} Let $s\in \R$, $q\in [1,\infty]$ and let $\Phi$ denote a Banach function space such that the Hardy-Littlewood operator $M$ is bounded on $\Phi$ and its dual $\Phi'$. 
Then, for every distribution $f\in \cS'(X)$, $f$ belongs to $B^{s,q}_{\Phi}(X)$ if and only if $f$ and $\partial f$ belong to $B^{s-1,q}_\Phi(X)$. Moreover, the function 
\begin{equation}\label{equiv norm}
B^{s,q}_\Phi(X) \ni f \mapsto \left\| f \right \|_{B^{s-1,q}_\Phi(X)} + \left\| \partial^\alpha f \right \|_{B^{s-1,q}_\Phi(X)}
\end{equation}
is an equivalent norm on $B^{s,q}_\Phi(X)$.
\end{lemma}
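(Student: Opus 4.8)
The plan is to prove the two-sided estimate by a dyadic Fourier-multiplier argument, comparing the Littlewood--Paley block $\psi_j(D)\partial f$ with $\psi_j(D)f$ at the neighbouring frequency scale and transferring derivatives across scales. Throughout I would use that $B^{s-1,q}_\Phi(X)\hookrightarrow \cS'(X)$ by Lemma \ref{completness}, so that the hypothesis ``$f,\partial f\in B^{s-1,q}_\Phi(X)$'' is meaningful, with $\partial f$ the distributional derivative (in the displayed functional the symbol $\partial^\alpha$ should read $\partial$). First I would fix a function $\chi\in\cD$ with $\chi\equiv 1$ on $\{1/2\le|\xi|\le 2\}$ and $\supp\chi\subset\{1/4\le|\xi|\le 4\}$ and set $\chi_j:=\chi(2^{-j}\cdot)$ for $j\ge 1$; then $\chi_j\equiv 1$ on $\supp\psi_j\subset\{2^{j-1}\le|t|\le 2^{j+1}\}$ and $\chi_j$ is supported away from the origin, which is what lets one invert the derivative at high frequencies.

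The argument rests on two elementary symbol identities, valid for $j\ge 1$, namely
\[
\psi_j(D)\partial f=2^{j}\,\Lambda_j(D)\psi_j(D)f,\qquad 2^{j}\psi_j(D)f=N_j(D)\psi_j(D)\partial f,
\]
where $\Lambda_j(t):=i2^{-j}t\,\chi_j(t)=g(2^{-j}t)$ with $g(\xi):=i\xi\chi(\xi)$, and $N_j(t):=\chi_j(t)/(i2^{-j}t)=h(2^{-j}t)$ with $h(\xi):=\chi(\xi)/(i\xi)$; both use $\chi_j\psi_j=\psi_j$ together with $\psi_j(D)\partial f=(it\psi_j)(D)f$, all as identities of $\cO_M$-multipliers on $\cS'(X)$. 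The functions $g,h$ are fixed, smooth and compactly supported ($h$ away from $0$), so $\Lambda_j,N_j$ are dilates of fixed symbols. I would then record that $\{\Lambda_j(D)\}_{j\ge 0}$ and $\{N_j(D)\}_{j\ge 1}$ are uniformly bounded on $\Phi(X)$: their kernels $\cF^{-1}\Lambda_j$ and $\cF^{-1}N_j$ are Schwartz functions whose $L^1(\R)$-norms equal $\|\cF^{-1}g\|_{L^1}$ and $\|\cF^{-1}h\|_{L^1}$ independently of $j$, so by Young's inequality the operators are uniformly bounded on $L^2(\R;X)$; since the $\cK_1$-seminorm \eqref{con K} is dilation invariant, these kernels also obey a uniform $\cK_1$-bound. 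Corollary \ref{CZ bd} then yields $\sup_j\|\Lambda_j(D)\|_{\cL(\Phi(X))}<\infty$ and $\sup_{j\ge 1}\|N_j(D)\|_{\cL(\Phi(X))}<\infty$, the index $j=0$ being absorbed by taking $\Lambda_0(t):=it\chi_0(t)$ for any fixed $\chi_0\in\cD$ equal to $1$ on $\supp\psi_0$.

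For the first inequality, rewriting the first identity as $2^{(s-1)j}\psi_j(D)\partial f=\Lambda_j(D)[\,2^{sj}\psi_j(D)f\,]$ gives $\|2^{(s-1)j}\psi_j(D)\partial f\|_{\Phi(X)}\le C\,\|2^{sj}\psi_j(D)f\|_{\Phi(X)}$, and an $\ell^q$-sum yields $\|\partial f\|_{B^{s-1,q}_\Phi(X)}\le C\,\|f\|_{B^{s,q}_\Phi(X)}$, while $\|f\|_{B^{s-1,q}_\Phi(X)}\le\|f\|_{B^{s,q}_\Phi(X)}$ is immediate from $2^{-j}\le 1$. For the converse, the second identity rewritten as $2^{sj}\psi_j(D)f=N_j(D)[\,2^{(s-1)j}\psi_j(D)\partial f\,]$ for $j\ge 1$ gives $\|2^{sj}\psi_j(D)f\|_{\Phi(X)}\le C\,\|2^{(s-1)j}\psi_j(D)\partial f\|_{\Phi(X)}$, whereas the $j=0$ block satisfies $\|\psi_0(D)f\|_{\Phi(X)}\le\|f\|_{B^{s-1,q}_\Phi(X)}$ trivially (the scales $s$ and $s-1$ coincide there). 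Summing in $\ell^q$, with the usual supremum modification for $q=\infty$, produces $\|f\|_{B^{s,q}_\Phi(X)}\le C\bigl(\|f\|_{B^{s-1,q}_\Phi(X)}+\|\partial f\|_{B^{s-1,q}_\Phi(X)}\bigr)$. The two inequalities together give the norm equivalence \eqref{equiv norm} and, in particular, the asserted equivalence ``$f\in B^{s,q}_\Phi(X)$ iff $f,\partial f\in B^{s-1,q}_\Phi(X)$''.

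I expect the only genuine difficulty to be the uniform-in-$j$ boundedness of $\Lambda_j(D)$ and $N_j(D)$ on $\Phi(X)$. Because a general Banach function space $\Phi$ need be neither translation- nor dilation-invariant, one cannot test against the dilated kernels directly on $\Phi(X)$ via Young's inequality; the correct route is the Calder\'on--Zygmund mechanism of Corollary \ref{CZ bd}, where the $L^2(\R;X)$-bound is obtained on the translation-invariant space and the passage to $\Phi(X)$ is handled by the Rubio de Francia algorithm, the dilation invariance of the $\cK_1$-seminorm being exactly what makes the estimates uniform across dyadic scales. The remaining points are bookkeeping: choosing $\chi_j$ supported away from $0$ so that $N_j$ is smooth, and isolating the block $j=0$, where the derivative cannot be inverted but the two smoothness scales agree.
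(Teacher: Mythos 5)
Your proof is correct, but it takes a genuinely different route from the paper's. The paper follows Triebel's classical scheme: it first proves that the lifting operator $\cJ = \cF^{-1}(1+(\cdot)^2)^{1/2}\cF$ maps $B^{s,q}_\Phi(X)$ isomorphically onto $B^{s-1,q}_\Phi(X)$ --- a step that runs through the Peetre-type maximal function $\phi_j^\ast f$, the pointwise domination $\phi_j^\ast g \lesssim M\left(|\phi_j(D)g|_X\right)$, Buckley's bound $\|M\|_{L^2_w}\lesssim [w]_{A_2}^2$, and the weighted-to-$\Phi$ transfer of Lemma \ref{completness} --- and then deduces the derivative characterization from the identities $\partial f = \rho(D)\cJ f$ and $\cJ f = f + \eta(D)\partial f$, with $\rho(t)=t(1+t^2)^{-1/2}$ and $\eta(t)=((1+t^2)^{1/2}-1)t^{-1}$ handled by Proposition \ref{bd multip}. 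You instead invert the symbol $it$ blockwise on each dyadic annulus, via $\psi_j(D)\partial f = 2^{j}\Lambda_j(D)\psi_j(D)f$ and $2^{j}\psi_j(D)f = N_j(D)\psi_j(D)\partial f$, where $\Lambda_j$, $N_j$ are $L^1$-normalized dilates of fixed symbols in $\cD$ (the second supported away from the origin); the dilation invariance of the $\cK_1$-seminorm together with Young's inequality supplies the uniform hypotheses of Corollary \ref{CZ bd}, which yields the uniform bounds on $\Phi(X)$, the consistency with the distributional multipliers being exactly what Theorem \ref{RdeF} provides, as you note. Your blockwise identities check out ($\chi_j\psi_j=\psi_j$ since $\supp\psi_j\subset\{2^{j-1}\le |t|\le 2^{j+1}\}$), your treatment of $j=0$ is right (the weights $2^{sj}$ and $2^{(s-1)j}$ coincide there, and the derivative is absorbed by the fixed symbol $\Lambda_0$), and both implications of the stated equivalence follow from the blockwise inequalities since each is derived term by term before summing in $\ell^q$. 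As for what each approach buys: the paper's route additionally establishes the lifting property of $\cJ$ (an isomorphism of the scales, of independent interest and reusable, e.g.\ for fractional lifts), but it leans on a Peetre maximal inequality and an approximation argument that the paper only sketches by reference to \cite{Tr83}; your argument is shorter, avoids the maximal-function machinery entirely, and is self-contained relative to this paper, using only Corollary \ref{CZ bd} and the $\cO_M$-multiplier calculus already in place. One cosmetic point you correctly flag: the symbol $\partial^\alpha$ in \eqref{equiv norm} should read $\partial$.
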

The proof of Lemma \ref{lem equiv norms} reproduces the ideas of the proof of the classical case when $X=\C$ and $\Phi=L^p$ from \cite{Tr83}.  Since the proof, even in the classical case, is somewhat complex and based on several ingredients in which formulation the characteristics $p$ and $q$ are involved, we sketch the line of this extension and underline the main supplementary observation should be made.

\begin{proof}[Proof of Lemma \ref{lem equiv norms}]
First, we show that the lifting operator $\cJ$ given by 
\[
\cJ f = \cF^{-1} (1+(\cdot)^2)^{1/2} \cF f \quad (f\in \cS(X))
\]
maps $B^{s,q}_\Phi(X)$ isomorphically onto $B^{s-1,q}_{\Phi}(X)$. 

Let $\{\psi_j\}_{j\in N_{0}}$ be the resolution of the identity on $\R$ as in Section \ref{spaces}. Set 
\[
\phi_j(t):=2^{-j}(1+t^2)^{1/2} \psi_j(t) \quad (j\in \N_0). 
\] 
 In order to show that $\cJ f\in B^{s-1,q}_\Phi(X)$ for every $f\in B^{s,q}_\Phi(X)$ it is sufficient to show that there exists a constant $\mu$ such that for all $j\in \N_0$
\[
\left\| 2^{(s-1)j} \psi_j(D) \cJ f  \right\|_\Phi(X) \leq  \mu \left\| 2^{sj} \psi_j(D)f   \right\|_{\Phi(X)}.
\] 
For note that for every $f\in \cS'(X)$ and every $t\in \R$
\begin{align*}
| 2^{(s-1)j} \psi_j(D) \cJ f (t) |_X & = | 2^{js}  \phi_j (D) f(t) |_X \\
& \leq 2^{js} \sup_{r\in \R} \frac{|\phi_j(D) f (t - r) |_X}{1+ |2^{j} r|} =: 2^{sj} \phi^\ast_j f(t). 
\end{align*} 
We show that there exists a constant $\mu$ such that for every $f\in B^{s,q}_\Phi(X)$ and every $j\in \N_0$ we have that
\begin{equation}\label{max ineq.}
\| \phi^\ast_jf\|_\Phi \leq \mu \| \phi_j(D) f\|_{\Phi(X)} \quad \textrm{ and } \quad \| \phi_j(D) f\|_{\Phi(X)} \leq \mu \| \psi_j(D) f\|_{\Phi(X)}. 
\end{equation}
The second inequality above follows from Proposition \ref{bd multip} since 
\[
\phi_j(D) f = \cF^{-1} 2^{-j} (1+(\cdot)^2)^{1/2} \chi_j \cF \cF^{-1} \psi_j \cF f
\]
and, as it is readily seen, the functions $\rho_j: \R \ni t \mapsto 2^{-j} (1+t^2)^{1/2} \chi_j(t)$, $j\in \N_0$, satisfy the  $\widetilde \m_3$-condition uniformly in $j$. Therefore, 
$\rho_j(D)$, $j\in N_0$, restrict to uniformly bounded operators in $\cL(B^{s,q}_\Phi(X))$.  

To get the first inequality in \eqref{max ineq.}, by the same argument as in the proof of \cite[Theorem 1.3.1]{Tr83}, we obtain the existence of a constant $\mu$ such that for every $g\in \cS(X)$ and $j\in N_0$
\[
\phi^\ast_j g(t) \leq \mu M\left(|\phi_j(D)g|_X \right)(t).
\]
Now by an approximation argument similar to that given in the proof of \cite[Theorem 1.4.1]{Tr83} one can show that for every Muckenhoupt weight $w\in A_2$ for every $f\in L^2_w(X)$ we have that 
\[
\|\phi^\ast_j f\|_{L^2_w} \leq \mu \| M\|_{L^2_w} \| \phi_j(D) f\|_{L^2_w(X)}.
\]

Since $\|M\|_{L^2_w}\lesssim [w]^2_{A_2}$ (see \cite[Theorem 2.5]{Bu93}), by an argument similar to that from the proof of Lemma \ref{completness}(the case $\E = F^{s,q}_\Phi$), we get the first inequality in \eqref{max ineq.}. 
Therefore, $\cJ$ maps $B^{s,q}_\Phi(X)$ into $B^{s-1,q}_\Phi(X)$. 
Note that $\cJ$ is an isomorphism onto $\cS'(X)$, and by the same argument to that above one can show that $\cJ^{-1} f = \cF^{-1}(1+(\cdot)^2)^{-1/2}\cF f \in B^{s,q}_\Phi(X)$ for each $f \in B^{s-1,q}_\Phi(X)$. Therefore, the lifting property of $\cJ$ is proved. 

Now fix $f\in B^{s,q}_\Phi(X)$. We show that $f, \partial f\in B^{s-1,q}_\Phi(X)$. 
Obviously, $f\in B^{s-1,q}_\Phi(X)$. For $\partial f$, note first that the function $\rho(t) := t(1+t^2)^{-1/2}$, $t\in \R$, satisfies the $\widetilde \m_3$-condition and is of $\cO_M$-class. Thus, by Proposition \ref{bd multip}, $\rho(D)$ restricts to an  operator in $\cL(B^{s-1,q}_\Phi(X))$, and 
\begin{equation}
\| \partial f \|_{B^{s-1,q}_\Phi(X)}  = \|\rho(D) \cJ f\|_{B^{s-1,q}_\Phi(X)} \leq 
\| \rho(D) \cJ \|_{\cL(B^{s-1,q}_\Phi(X))} \|f\|_{{B^{s,q}_\Phi(X)}} 
\end{equation}
Finally, let $\eta(t) :=  ((1+t^2)^{1/2} - 1)t^{-1}$. It is readily seen that $\eta$ satisfies the $\widetilde \m_3$-condition. Therefore, $\eta(D)$ restricts to an operator in $\cL(B^{s,q}_\Phi(X))$ and for every $f\in \cS'(X)$ 
\[
\cJ f = f + \eta(D) \partial f. 
\]
Since $\cJ^{-1}$ maps isomorphically $\B^{s-1,q}_\Phi(X)$ onto $\B^{s,q}_\Phi(X)$, if  $f \in B^{s-1,q}_\Phi(X)$ with $\partial f\in B^{s-1,q}_\Phi(X)$, then $f \in B^{s,q}_\Phi(X)$. It completes the proof.

\end{proof}

\begin{theorem}\label{main for itegro-dff}
Let  $A, B$ and $P$ be closed, linear operator on a Banach space $X$ such that $D_A \hookrightarrow D_B, D_P$. Let $c\in \cS'(\cL(Y,X))$ with $\hat c = \cF c \in L^\infty(\cL(Y,X))$. 
Let $\Phi$ be a Banach function space over $(\R, \ud t)$ such that the Hardy-Littlewood  operator $M$ is bounded on $\Phi$ and its dual $\Phi'$.

\begin{itemize}
\item[(i)] Assume that the functions $b, a , a_0, a_1$ satisfy the assumptions of Proposition \ref{main 2}.  
If $\hat c \in \widetilde{\m}_2(\cL(Y,X))$, then for every $s\in\R$, $q\in [1,\infty]$
the problem $\eqref{P}$ has $B^{s,q}_\Phi$-maximal regularity property. That is, for every $f\in B^{s,q}_\Phi (X)$ the problem \eqref{P} has a unique distributional solution $u$ in $B^{s,q}_\Phi (Y)$ such that 
\[
\partial \cP \partial u,\, \cB \partial u,\,  \cA u,\, c\ast u \in B^{s,q}_\Phi (X). 
\] 
In addition, if $(\cdot)a(\cdot) \in L^\infty(\cL(X))$, then 
\[
\partial u,\, \cB \partial u,\, \cP \partial u  \in \B^{s,q}_\Phi(X),
\]
and, in the case when $s>0$, the function $u$ is a strong solution of \eqref{P}, $\partial u = u'$ and $(Pu')' \in \B^{s,q}_{\Phi}(X)$.

\item [(ii)] Assume that for every $t\neq 0$
\[
b(t)= - t^2 P + it B + A + \hat c(t) \in \cL(Y,X)
\] is invertible and let $\hat c \in \m_3(\cL(Y,X))$. 
If \eqref{P} has $L^p$-maximal regularity property, 
 then for every
\[
\E \in \left\{\,\Phi,\, B^{s,q}_\Phi,\, F^{s,r}_\Phi: s\in \R, q\in [1,\infty], r\in (1,\infty) \right\} 
\] it has $\E$-maximal regularity property.
  
In addition, if $(\cdot)a(\cdot) \in L^\infty(\cL(X))$, then 
\[
\partial u,\,  \cB \partial u,\, \cP \partial u  \in \E(X).
\]
Furthermore, if $\E\subset L^1_{loc}$, then the function $u$ is a strong solution of \eqref{P}, $\partial u = u'$ and $(Pu')' \in \E(X)$.

In particular, the last statement holds for all 
\[
\E \in \left\{\,\Phi,\, B^{s,q}_\Phi,\, F^{s,r}_\Phi: s>0, q\in [1,\infty], r\in (1,\infty) \right\}.
\]
\end{itemize}
\end{theorem}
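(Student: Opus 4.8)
The plan is to solve \eqref{PP} by reading off its symbol. Formally taking the Fourier transform turns \eqref{PP} into $b(t)\hat u(t)=\hat f(t)$, with $b$ as in \eqref{fun c}, so the natural candidate for the solution operator is $u:=a(D)f$ where $a=b^{-1}$. With $a_0,a_1$ as in \eqref{functions}, the four summands on the left of \eqref{P}, viewed as operators applied to $f$, are exactly the Fourier multipliers with $\cL(X)$-valued symbols
\[
-a_1,\qquad i\,a_0,\qquad A a,\qquad \hat c\, a,
\]
coming respectively from $\partial\cP\partial$, $\cB\partial$, $\cA$ and $c\ast$; their sum is $(-t^2P+itB+A+\hat c)a=ba=I$. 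Thus the whole theorem reduces to a boundedness and consistency analysis of these multipliers, governed by Propositions \ref{main 2} and \ref{bd multip}.

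First I would record the regularity of the symbols. In part (i), Proposition \ref{main 2} applied to $\hat c\in\widetilde{\m}_2(\cL(Y,X))$ gives $a\in\widetilde{\m}_2(\cL(X,Y))$ and $a_0,a_1\in\widetilde{\m}_2(\cL(X))$; since $A\in\cL(Y,X)$ and the class $\widetilde{\m}_2$ is stable under left composition with a fixed operator and under the Leibniz product, also $Aa,\hat c\,a\in\widetilde{\m}_2(\cL(X))$. In part (ii) the same lemma, now with $\hat c\in\m_3$, yields the analogous memberships in $\m_3$; by Lemma \ref{CZ cond} each such symbol then has a kernel satisfying the Calder\'on--Zygmund condition, and the standing assumption that \eqref{P} enjoys $L^p$-maximal regularity supplies the required $L^p$-boundedness of $a(D)$, $a_0(D)$, $a_1(D)$, $(Aa)(D)$ and $(\hat c\,a)(D)$, so that all of them are Calder\'on--Zygmund operators. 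Consequently Proposition \ref{bd multip}(i) (in part (i)) and Proposition \ref{bd multip}(ii) (in part (ii)) provide, for every $\E$ in the relevant list, bounded extensions of each of these multipliers to the corresponding spaces. Setting $u:=a(D)f\in\E(Y)$ this already yields the maximal regularity estimate: every summand of \eqref{P} lies in $\E(X)$ with norm controlled by $\|f\|_{\E(X)}$.

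The substantive step is to verify that this $u$ actually solves \eqref{P} and is unique. On $\cS(X)$ all the operations are genuine Fourier multipliers, so the identity $-a_1+ia_0+Aa+\hat c\,a=I$ gives $\partial\cP\partial u+\cB\partial u+\cA u+c\ast u=f$ for $f\in\cS(X)$. To pass to arbitrary $f\in\E(X)$ I would compare, inside the ambient space $\cS'(X)$, each distributional operation $f\mapsto\partial\cP\partial(a(D)f)$ (and likewise for the other three summands) with the corresponding bounded multiplier extension furnished by Proposition \ref{bd multip}: both maps are continuous from $\E(X)$ into $\cS'(X)$ by the embedding of Lemma \ref{completness}, and they coincide on the dense subspace $\cS(X)$, hence on all of $\E(X)$; since the multiplier extension takes values in the smaller space $\E(X)$, so does each summand. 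In the case $\E=B^{s,\infty}_\Phi$, where $\cS(X)$ fails to be dense, I would instead invoke the $\sigma(B^{s,\infty}_\Phi,B^{-s,1}_{\Phi'})$-density of $\Phi(X)\cap B^{s,\infty}_\Phi(X)$ and the weak-$\ast$ continuity of the extensions, both supplied by Lemma \ref{completness} and Proposition \ref{bd multip}. This reconciliation of the abstractly constructed extension with the concrete differential operations is the point at which the Rubio de Francia machinery of Section \ref{sec extension} is indispensable, and I expect it to be the main obstacle. Uniqueness is then easy: a difference $w\in\E(Y)$ of two solutions has $\hat w$ supported at the origin, because $b$ is invertible and smooth off $0$, and no nonzero distribution supported at $0$ lies in $\E(Y)$, so $w=0$.

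Finally I would upgrade to the strong form. The extra hypothesis $(\cdot)a(\cdot)\in L^\infty(\cL(X))$ forces the decay $\|a(t)\|_{\cL(X)}\lesssim(1+|t|)^{-1}$, and likewise $\|tPa(t)\|_{\cL(X)}\lesssim(1+|t|)^{-1}$ since $a_1$ is bounded; inserting this into the differentiation identities \eqref{est 1}--\eqref{est 3}, exactly as in the proof of Proposition \ref{main 2}, shows that the symbols $t\,a(t)$ and $t\,Pa(t)$ of $\partial u$ and $\cP\partial u$ satisfy the $\widetilde{\m}_2$- (resp.\ $\m_3$-) condition as $\cL(X)$-valued functions. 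Proposition \ref{bd multip} then places $\partial u$ and $\cP\partial u$ in $\E(X)$ (in the Besov case one may alternatively deduce $\cP\partial u\in\E(X)$ from $\partial\cP\partial u\in\E(X)$ by Lemma \ref{lem equiv norms}), while $\cB\partial u\in\E(X)$ is already part of the main statement. Whenever $\E\subset L^1_{loc}$ --- in particular for $\Phi$ itself and for every $B^{s,q}_\Phi,F^{s,r}_\Phi$ with $s>0$, all of which embed into $\Phi\subset L^1_{loc}$ --- Lemma \ref{strong solution}, applied with $C=B$, with $C=P$, and once more to $\cP\partial u=Pu'$, identifies the distributional derivatives with pointwise strong ones, yields $\partial u=u'$ and $(Pu')'=\partial\cP\partial u\in\E(X)$, and turns \eqref{P} into the almost-everywhere identity \eqref{P strong}. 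This completes both parts of the theorem.
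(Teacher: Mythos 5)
Your overall architecture --- the solution operator $a(D)$ with $a=b^{-1}$, the algebraic identity $-a_1+ia_0+Aa+\hat c\,a = ba = I$, symbol regularity from Proposition \ref{main 2}, boundedness from Proposition \ref{bd multip}, and the strong-solution upgrade via Lemmas \ref{lem equiv norms} and \ref{strong solution} --- is the same as the paper's, and the strong-solution part is carried out correctly. But your central consistency step has a genuine gap. You pass from $f\in \cS(X)$ to general $f\in \E(X)$ by asserting that the distributional operation $f\mapsto \partial\cP\partial (a(D)f)$ and the bounded extension ``coincide on the dense subspace $\cS(X)$.'' This fails on two counts. First, $\cS(X)$ is \emph{not} dense in $B^{s,q}_\Phi(X)$ for general $\Phi$, even when $q<\infty$: Lemma \ref{completness} only yields density of $\Phi(X)\cap B^{s,q}_\Phi(X)$, and for $\Phi = L^{p,\infty}$ --- one of the motivating examples named in the introduction --- Schwartz functions are not dense in $\Phi(X)$, hence not in the associated Besov scale. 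Second, the map you want to compare against is not even defined on $\E(X)$: since $\hat c$, and therefore $a$, is merely of class $C^2$ (resp.\ $C^3$) on $\dot\R$ and need not belong to $\cO_M(\cL(X,Y))$, the multiplier $a(D)$ has no canonical action on $\cS'(X)$, and a general $f\in \E(X)$ need not lie in the initial domain $D_a$ (its Fourier transform need not be a regular distribution). So there is no continuous map $\E(X)\to\cS'(X)$ on the ``distributional'' side of your comparison; constructing one is precisely the problem to be solved. The paper closes this gap by the two-step Rubio de Francia route that you name as ``indispensable'' but never actually deploy: it first verifies the equation for all $f\in B^{s,q}_\Psi(X)$ with $\Psi = L^2_w$, $w\in A_2$, where $\cS(X)$ \emph{is} dense (for $q<\infty$; weak-star for $q=\infty$); then, for $f\in \Phi(X)\cap B^{s,q}_\Phi(X)$, Theorem \ref{RdeF}(i) places $f$ in some $L^2_w(X)$, the consistency assertions of Proposition \ref{bd multip} and Theorem \ref{ext general} identify $\cT_\Phi f$ with $\cT_{L^2_w}f$, and the density of $\Phi(X)\cap B^{s,q}_\Phi(X)$ from Lemma \ref{completness} concludes. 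Your weak-star treatment of $q=\infty$ is the right idea, but it must be grafted onto this scheme, not onto density of $\cS(X)$.

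Two smaller points. In part (ii) you apply Proposition \ref{main 2} with $\hat c\in\m_3$, but its hypotheses $a\in L^\infty(\cL(X,Y))$ and $a_0,a_1\in L^\infty(\cL(X))$ are not assumed in the theorem; they must first be derived from the assumed $L^p$-maximal regularity (the paper's ``standard argument,'' testing the bounded solution operator against modulated functions), and only then can the $\m_3$-memberships and the Calder\'on--Zygmund structure be invoked. And your uniqueness argument asserts that $b$ is ``smooth off $0$'': it is only $C^2$ resp.\ $C^3$ on $\dot\R$, so multiplying the tempered distribution $\hat w$ by $a$ is not a legitimate operation for distributions of unbounded local order; a repair would localize in frequency, e.g.\ applying the operators $(\psi_j a)(D)$, whose kernels are integrable, to the homogeneous equation to get $\psi_j(D)w=0$ for each $j$. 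Your final step, that no nonzero distribution with Fourier support $\{0\}$ (i.e.\ no nonzero polynomial) lies in $\E(Y)$, is true --- boundedness of $M$ on $\Phi'$ precludes constants from belonging to $\Phi$ --- but it is stated without proof; to be fair, the paper itself leaves the uniqueness verification implicit.
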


\begin{proof} $(i)$ Fix $s\in \R$ and $q\in [1,\infty]$. 
By Propositions \ref{main 2} and \ref{bd multip}, the operators $a(D)_{|\cS(X)}$ and $\hat c(D)_{|\cS(X)}$ have extensions to  operators $\cT_\Psi = \cT_{\Psi, a}$ and $\cT_{\Psi, \hat c}$ in  $\cL(B^{s,q}_\Psi(X),B^{s,q}_\Psi(Y))$ for every Banach function space $\Psi$ such that $M$ is bounded on $\Psi$ and its dual $\Psi'$. 
Since for every $w\in A_2$, $w^{-1}\in A_2$ and $(L^2_w)'=L^2_{w^{-1}}$, we can take $\Psi = L^2_w$ with an arbitrary $w\in A_2$.

Let  $w\in A_2$ and set $\Psi:=L^2_w$. For $f\in \cS(X) \subset B^{s,q}_\Psi(X)$ one can easily check that $\cT_\Psi f = a(D)f\in B^{s,q}_\Psi(Y)$ is a solution of \eqref{P}.  
Recall that $\cA, \cB, \cP \in \cL(\cS'(Y), \cS'(X))$, $\partial \in \cL(\cS'(X))\cap \cL(\cS'(Y))$ and $\cT_{\Psi, \hat c} \in \cL(B^{s,q}_\Psi(Y),B^{s,q}_\Psi(X))$. 
By Lemma \ref{completness}, $B^{s,q}_\Psi(X) \hookrightarrow \cS'(X)$. If $q<\infty$, then $\cS(X)$ is a dense subset of $B^{s,q}_\Psi(X)$.  In the other cases, $\cS(X)$ is $\sigma(B^{s,\infty}_\Psi(X),B^{-s,1}_{\Psi'}(X^*)$-dense in $B^{s,q}_\Psi(X)$, 
 the operator $\cT_\Psi$ is  $\sigma(B^{s,\infty}_\Phi(X), B^{-s,1}_{\Phi'}(X^*))$-to-$\sigma(B^{s,\infty}_\Phi(Y), B^{-s,1}_{\Phi'}(Y^*))$-continuous, and the similar continuity property holds for $\cT_{\Psi, \hat c}$\,; see Proposition \ref{bd multip}.  
 Hence, in any case, $\cT_\Psi f$ is a solution of \eqref{P} for every $f\in B^{s,q}_\Psi(X)$.  

Now, take an arbitrary $\Phi$. Let $\cT_\Phi$ be the extension of $a(D)_{|\cS(X)}$ to an operator in $\cL(B^{s,q}_\Phi(X), B^{s,q}_\Phi(Y))$. 
Let $f\in B^{s,q}_\Phi(X) \cap \Phi(X)$. By Theorem \ref{RdeF}(i), there exists $w\in A_2$ such that $f\in L^2_w(X)=:\Psi(X)$. Moreover, note that $f_N:= \psi(2^{-N}D)f \in B^{s,q}_\Phi(X) \cap B^{s,q}_\Psi(X)$ for every $N\in \N$, $f_N \rightarrow f$ as $N\rightarrow \infty$ in  $B^{s,q}_\Phi(X)$ if $q<\infty$ and in the $\sigma(B^{s,\infty}_\Phi(X),B^{-s,1}_{\Phi'}(X^*))$-topology of $B^{s,\infty}_\Phi(X)$ in the other case; see Lemma \ref{completness}.  By Proposition \ref{bd multip}, $\cT_\Phi f_N = \cT_\Psi f_N$ and 
\[
\partial \cP \partial \cT_\Phi f   + \cB \partial \cT_\Phi f + \cA \cT_\Phi f + \cT_{\Phi, \hat c} \cT_\Phi f = f \quad \quad \textrm{ in } \cS'(X).
\] 
Since  $B^{s,q}_\Phi(X) \cap \Phi(X)$ is dense in $B_\Phi^{s,q}(X)$ if $q<\infty$, and $\sigma(B^{s,\infty}_\Phi(X),B^{-s,1}_{\Phi'}(X^*)$-dense in $B^{s,\infty}_\Psi(X)$ if $q=\infty$, the continuity argument based on Proposition \ref{bd multip} and Remark \ref{norms}(b) shows that the above equation holds for every $f\in B^{s,q}_\Phi(X)$.

By our assumptions and Proposition \ref{main 2} the functions 
$a_0, a_1, Aa(\cdot)$, and   $\hat c(\cdot)a(\cdot)$ belong to $\widetilde{\m}_2(\cL(X))$. Therefore, we can apply  Proposition \ref{bd multip} to these functions. One can easily check, that the corresponding operators in $\cL(B^{s,q}_\Phi(X))$, postulated in Proposition \ref{bd multip}, are given by 
\[
\partial \cP \partial \cT_\Phi, \quad \cB \partial \cT_\Phi, \quad \cA \cT_\Phi, \quad  T_{\Phi, \hat c} T_\Phi, 
\] 
respectively. Recall that $\cT_\Phi=\cT_{\Phi, a}$ corresponds to the function $a$. 
It proves that \eqref{P} has $B^{s,q}_\Phi$-maximal regularity property. 

Now we make a closer analysis to derive strong solutions of \eqref{P}.
Fix $f\in B^{s,q}_\Phi(X)$ and put $u:= \cT_\Phi f  \in  B^{s,q}_\Phi(Y)$. 
Since 
\[
 \partial (\partial \cP u  + \cB u) = \partial \cP \partial u + \cB \partial u \in B^{s,q}_\Phi(X)
\] 
and $v, \partial v \in B^{s,q}_{\Phi}(X)$ if and only if $v \in  B^{s+1,q}_{\Phi}(X)$ (see Lemma \ref{lem equiv norms}), we get $\partial \cP u  + \cB u \in B^{s+1,q}_\Phi(X)$, $\cB u\in B^{s+1,q}_{\Phi}(X)$, and, consequently, $\cP u \in  B^{s+2,q}_{\Phi}(X)$. 

For $s>0$ we have that $B^{s,q}_{\Phi}(X)\subset \Phi(X)\subset L^1_{loc}(X)$; see Lemma \ref{completness}. Thus,  $Bu\in W^{1,1}_{loc}(X)$ and $Pu\in W^{1,2}_{loc}(X)$, $B\partial u = (Bu)'$, $\partial(\cP\partial u) = (Pu)''$, $Au$, $\cT_{\Phi, \hat c} u$ belong to $B^{s,q}_{\Phi}(X)$,  and \eqref{P} can be interpreted as
\[
(Pu)''(t) + (Bu)'(t) +Au(t) +  c \ast u(t) = f(t) \quad \textrm{  for a.e. } t \in  \R. 
\]
In addition, if we know that $u \in W^{1,1}_{loc}(X)$, then by Lemma \ref{strong solution}, $\cB \partial u = Bu'$ and $\partial \cP \partial u = (P u')'$ and \eqref{P} takes the form \eqref{P strong}.
One can easily check that our additional assumption that the function $(\cdot) a(\cdot)$ is in  $\widetilde{\m}_2(\cL(X))$ yields $u \in W^{1,1}_{loc}(X)$. It completes the proof of $(i)$.\\

$(ii)$ Since the inversion map is analytic, $a \in \cC^{3}(\dot \R; \cL(X,Y))$, where $a(s):=b(s)^{-1}$ for $s\neq 0$. 
Note that for every $f\in \cF^{-1}\cD_0(X)$ the function $u:= \cF^{-1}(a(\cdot) \cF f) \in L^p(Y)$ satisfies \eqref{P}. Therefore, the operator $a(D)_{|\cF^{-1}\cD_0(X)}$ has an extension to an operator $T$ in $\cL(L^p(X),L^p(Y))$, which assigns to each $f\in L^p(X)$ the corresponding solution $u\in L^p(Y)$ of \eqref{P}. A standard argument shows that $a\in L^\infty(\cL(X,Y))$. Similarly, we get that the functions $a_0$ and $a_1$ given by \eqref{functions} belong to $L^\infty(\cL(X))$. Consequently, by Proposition \ref{main 2}, $a, a_0, a_1$ satisfy the $\m_3$-condition provided that $\hat c$ does. 
Now, Proposition \ref{bd multip}(ii) shows that the multipliers $a(D)$, $a_0(D)$, $a_1(D)$, $\hat c(D)$ have  extensions to operators $\cT_\E :=\cT_{\E,a}\in \cL(\E(X), \E(Y))$, $\cT_{\E,a_0}, \cT_{\E,a_1} \in \cL(\E(X))$, and $\cT_{\E,\hat c} \in \cL(\E(Y),\E(X))$, respectively.  Moreover, we have that 
\[
\cT_{\E,a_0} = \cB \partial \cT_{\E} , \quad  \cT_{\E,a_1} = \partial \cP \partial \cT_{\E},\quad  \cT_{\E, \hat c a} = \cT_{\E, \hat c} \cT_{\E}. 
\]

By similar arguments to those presented in the proof of the part $(i)$, first one can show that \eqref{P} has $\Phi$-maximal regularity. If $\E \neq \Phi$, then relying on the fact that $\E(X)\cap \Phi(X)$ is a dense subset of $\E(X)$ (see Lemma \ref{completness}) and the fact that $\cT_{\E}$ is consistent with $\cT_{\Phi}$ (see Theorem \ref{ext general}(ii)), we get that for every $f\in \E(X)$ the distribution $u:= \cT_{\E, a} f$ is a solution of \eqref{P} in $\E(X) \subset \cS'(X)$ with desired properties, i.e. $\partial \cP \partial u,\, \cB \partial u,\,  \cA u,\, c\ast u \in \E(X)$.

For the second statement, if $(\cdot)a(\cdot) \in L^\infty(\cL(X))$, then by Propositions \ref{main 2} and \ref{bd multip}, one can argue that 
$ \cT_{\E, (\cdot)a(\cdot)}f =  \partial \cT_{\E} f \in \E(X)$ for every $f\in \E(X)$. 
The closedness of $B$ and $P$ implies that 
\[
\cB \partial \cT_\E,\, \cP \partial \cT_E \in \E(X).
\]
If, additionally, $\E\subset L^1_{loc}$, then again by Lemma \ref{strong solution}, we get that the function $u = \cT_{\E} f$ is a strong solution of \eqref{P}, $\partial u = u'$ and  $ \partial \cP \partial u = (Pu')' \in \E(X)$. This completes the proof. 
\end{proof}

Under additional assumptions on the geometry of the underlying Banach space $X$ one can relax the regularity conditions imposed on the function $\hat c$. We refer to Remark \ref{R bound} for the notion of the $R\m_1$-condition, which is involved in the formulation of the following result.

\begin{corollary}\label{for umd}
Let  $A, B$ and $P$ be closed, linear operator on a Banach space $X$ with  $U\!M\!D$ property such that $D_A \hookrightarrow D_B, D_P$. Let $c:=\cF^{-1} \hat c$ for some function $\hat c \in L^\infty(\cL(Y,X))$. 

Assume that for every $t\neq 0$
\[
b(t)= - t^2 P + it B + A + \hat c(t) \in \cL(Y,X)
\] is invertible and let $a$ and $a_0,a_1$ be given as in \eqref{functions}. 
Assume that the functions $a$, $a_0$, $a_1$ are $R$-bounded and that $\hat c$ satisfies the $R\m_1$-condition.

Then, the conclusion of Theorem \ref{main for itegro-dff}(ii) holds for the corresponding problem \eqref{P}.  
\end{corollary}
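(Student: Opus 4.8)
The plan is to mimic the proof of Theorem \ref{main for itegro-dff}(ii), replacing the appeals to Proposition \ref{bd multip}(ii) by the $U\!M\!D$-based boundedness recorded in Remark \ref{R bound}(b). In that proof the hypotheses $\hat c\in\m_3$ and that \eqref{P} has $L^p$-maximal regularity enter only in order to invoke Proposition \ref{bd multip}(ii) for the symbols $a$, $a_0$, $a_1$, $\hat c$, $Aa(\cdot)$ and $\hat c(\cdot)a(\cdot)$; once the conclusion of Proposition \ref{bd multip}(ii) is available for each of them, the remaining steps — that $\cT_{\E,a}f$ solves \eqref{P}, the identifications $\cT_{\E,a_0}=\cB\partial\cT_\E$, $\cT_{\E,a_1}=\partial\cP\partial\cT_\E$ and $\cT_{\E,\hat c a}=\cT_{\E,\hat c}\cT_\E$, the density/consistency arguments, and the passage to strong solutions via Lemma \ref{strong solution} — carry over verbatim. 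I first note that $Y=D_A$ is a $U\!M\!D$ space, being isomorphic (via the closed graph of $A$) to a closed subspace of $X\oplus X$, so that all the symbols below act between $U\!M\!D$ spaces.

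Next I would verify that each of these symbols satisfies the $R\m_1$-condition. For $a$, $a_0$, $a_1$ this is exactly the last assertion of Proposition \ref{main 2}, since they are $R$-bounded and $\hat c$ satisfies $R\m_1$. For the products I would use that $R$-bounded families are stable under finite sums and composition (as in the proof of Proposition \ref{main 2}): from $b(t)a(t)=I$ one gets $Aa=I+a_1-ia_0-\hat c a$, so $\{Aa(t)\}$ is $R$-bounded, while $\{\hat c(t)a(t)\}$ is $R$-bounded as a composition. The derivative parts use $a'=-ab'a$ together with $tb'(t)a(t)=-2a_1(t)+ia_0(t)+(t\hat c'(t))a(t)$, whose summands are all $R$-bounded; this yields $R(\{t(Aa)'(t)\})<\infty$ and $R(\{t(\hat c a)'(t)\})<\infty$. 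Hence $a$, $a_0$, $a_1$, $\hat c$, $Aa(\cdot)$ and $\hat c(\cdot)a(\cdot)$ all satisfy $R\m_1$ as $\cL$-valued symbols on the relevant $U\!M\!D$ pair among $X$ and $Y$.

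Then I would invoke Remark \ref{R bound}(b): for a $U\!M\!D$ pair and a symbol $m$ satisfying $R\m_1$, \cite[Theorem 3.5(a)]{FaHyLi20} gives $m(D)\in\cL(L^p_w(X),L^p_w(Y))$ for all $w\in A_p$ and $p\in(1,\infty)$, with $\sup_{w\in\cW}\|m(D)\|_{\cL(L^p_w(X),L^p_w(Y))}<\infty$ whenever $\sup_{w\in\cW}[w]_{A_p}<\infty$. Thus $m(D)_{|\cS(X)}$ satisfies the hypothesis of Theorem \ref{ext general}(ii) and extends to $\cT_{\E,m}\in\cL(\E(X),\E(Y))$ for every $\E\in\{\Phi, B^{s,q}_\Phi, F^{s,r}_\Phi\}$, consistently in $\E$. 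This is precisely the conclusion of Proposition \ref{bd multip}(ii) for our symbols, so the skeleton of the proof of Theorem \ref{main for itegro-dff}(ii) applies and yields $\E$-maximal regularity with $\partial\cP\partial u,\cB\partial u,\cA u,c\ast u\in\E(X)$. For the supplementary statements I would take the $R$-bounded analogue of the auxiliary condition $(\cdot)a(\cdot)\in L^\infty(\cL(X))$, namely $R$-boundedness of $(\cdot)a(\cdot)$; then $\{t^2a'(t)\}=\{2(ta)a_1-i(ta)a_0-(ta)(t\hat c')a\}$ is $R$-bounded, so $(\cdot)a(\cdot)$ satisfies $R\m_1$, whence $\partial u=\cT_{\E,(\cdot)a(\cdot)}f\in\E(X)$, $\cB\partial u,\cP\partial u\in\E(X)$, and on the spaces with $\E\subset L^1_{loc}$ the strong solution and $(Pu')'\in\E(X)$ follow from Lemma \ref{strong solution} exactly as before.

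The main obstacle is the passage from the $R\m_1$-condition to the uniform weighted $L^p$-bounds demanded by Theorem \ref{ext general}(ii): this is where the $U\!M\!D$ hypothesis is used essentially, and it rests on the Muckenhoupt-weighted operator-valued multiplier theorem of \cite{FaHyLi20} rather than on the Calder\'on--Zygmund kernel estimates underlying Proposition \ref{bd multip}(ii). A secondary point is the careful bookkeeping of the $R$-boundedness closure identities for the composite symbols — in particular isolating the correct $R$-bounded replacement for the norm condition $(\cdot)a(\cdot)\in L^\infty(\cL(X))$ — which here takes the place of the $\m_\gamma$-calculus of Proposition \ref{main 2}.
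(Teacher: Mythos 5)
Your proposal is correct and follows essentially the same route as the paper: the published proof of Corollary \ref{for umd} is exactly the one-line combination of \cite[Theorem 3.5]{FaHyLi20} (via Remark \ref{R bound}(b)), the last statement of Proposition \ref{main 2}, and Theorem \ref{ext general}, run along the skeleton of the proof of Theorem \ref{main for itegro-dff}(ii). Your additional bookkeeping — verifying the $R\m_1$-condition for the composite symbols $Aa(\cdot)$ and $\hat c(\cdot)a(\cdot)$ via $Aa = I + a_1 - ia_0 - \hat c a$ and $a' = -ab'a$, noting that $Y = D_A$ is $U\!M\!D$ as a closed subspace of $X\oplus X$, and replacing the auxiliary hypothesis $(\cdot)a(\cdot)\in L^\infty(\cL(X))$ by $R$-boundedness of $\{ta(t)\}$ for the supplementary statements — correctly fills in details the paper leaves implicit.
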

\begin{proof}
By the means of \cite[Theorem 3.5]{FaHyLi20}(see Remark \ref{R bound}), Proposition \ref{main 2},  and Theorem \ref{ext general}, the proof follows the lines of the proof of Theorem \ref{main for itegro-dff}(ii) above.  
\end{proof}

\begin{remark}
The $L^p$- and $B^{s,q}_p$-maximal regularity of the problem \eqref{P} with a particular form of the convolution term $c\ast u$ was studied in \cite{ApKe20} and \cite{KeAp20}, respectively. Corollary \ref{for umd} provides an extension of \cite[Theorem 4.1]{ApKe20}. 
In these papers the reader can find a list of concrete models arising in many areas of applied mathematics, which are cover by this abstract problem. In particular, following presentation in \cite[Section 5]{KeAp20}, one can apply Theorem \ref{main for itegro-dff} to several concrete problems arising in physics. We left such adaptation of \cite[Section 5]{KeAp20} for the interested reader.  
\end{remark}

\section{Particular forms of the equation \eqref{P} }\label{cases}

We do not attempt to give a systematic survey on particular cases of \eqref{P} and address only some issues directly related to the assumptions of Theorem \ref{main for itegro-dff}. As in the previous section, we assume that $A$, $B$, and $P$ are closed linear operators on a Banach space $X$ such that $D_A\hookrightarrow D_B,D_P$. 

\subsection{The evolutionary differential equations} Consider the equation \eqref{P} with $c=0$, i.e.
\begin{equation}\label{P no c}
\partial \cP \partial u + \cB \partial u + \cA u = f \quad \textrm{ in } \cS'(X).
\end{equation}
Such abstract evolution equation reduces to further ones, which have been intensively studied in the literature. We mention here a few of them. 

When $P=0$ and $B=I$, \eqref{P no c} takes the form of the problem \eqref{P1} already considered in Section 1. Its variant on the half-line $\R_+:=[0, \infty)$, i.e. the following first-order Cauchy problem 
\begin{equation}\label{ACP}
 u' + A u = f  \textrm{ on } \R_+ \quad \textrm{ with }  u(0)= 0
\end{equation}
is a model one for the study of the $L^p$-maximal regularity of \eqref{P no c}.  
We refer the reader to \cite{KuWe04} or \cite{DeHiPr03a} for the background on the $L^p$-maximal regularity the abstract Cauchy problem \eqref{ACP}. 
 In this context, we point out that the kernels of Fourier multipliers involved in the study of maximal regularity have supports in $\R_+$, which allows considering a larger class of Banach function spaces $\Phi$ over $(\R_+, \ud t)$, and consequently proving the existence of strong solutions of \eqref{ACP} for a larger class of functions $f$; see \cite[Section 5]{ChKr17} and the references therein.

The problem \eqref{P1} on the line was considered by Mielke \cite{Mi87}, who characterized its $L^p$-maximal regularity in the class of closed, linear operators $A$ on Hilbert spaces $X$. 
His result was later extended for $X$ with $U\!M\!D$ property by Arendt and Dueli \cite[Theorem 2.4]{ArDu06}. It provides a direct counterpart of Weis' characterisation of the $L^p$-maximal regularity for the Cauchy problem \eqref{ACP}; see \cite{We01}. 
More recently, a similar strategy was adapted, e.g. in \cite{ApKe20}(cf. also \cite{PoPo17}), to give a counterpart of Weis' result for \eqref{P} with a special form of $c$. Note that the main point is to give a variant of Mielke's argument from \cite[Lemma 2.3]{Mi87} for \eqref{P} (cf. also a different argument given by Dore \cite{Do93} for  \eqref{ACP}). More precisely, one need to argue that if \eqref{P} has $L^p$-maximal regularity then there exists $\alpha>0$ such that the problem 
\begin{equation}\label{Mielke 1}
(Pv')' + Bv' + A v  = g + \alpha Pv - \alpha \sgn(\cdot) Bv - e^{-\alpha|\cdot|} c\ast (e^{\alpha|\cdot|}v)
\end{equation}
is well-posed in $L^p(X)$, i.e. for all $g\in L^p(X)$ there exists a strong solution $v$ in $L^p(Y)$. Note that the well-posedness of \eqref{Mielke 2} in $L^p(X)$ is equivalent to that of \eqref{P} in $L^p(\R,e^{-\alpha|\cdot|} \ud t;X)$, which further yields to the invertibility of the operator $b(t)$ in \eqref{fun c} for all $t\in \R$. 
If $\cT$ denotes the solution operator for \eqref{P}, which exists by the assumed $L^p$-maximal regularity of \eqref{P}, then  \eqref{Mielke 1} yields 
\begin{equation}\label{Mielke 2}
v - \cT\left( \alpha [P - \sgn(\cdot) B]v +  e^{-\alpha|\cdot|} c\ast (e^{\alpha|\cdot|}v) - c\ast v \right) = \cT g. 
\end{equation}
Therefore, to show that that the linear operator on the left side of \eqref{Mielke 2} is invertible on $L^p(X)$ for some $\alpha>0$, and make all considerations rigorous, one need to make some further restriction on the convolutor $c$ (cf. \cite{ApKe20} and \cite{PoPo17}). Below we chose the simplest case, that is, when $c=0$. Then, $\|\alpha \cT(P - \sgn(\cdot) B)\|_{\cL(L^p(X))}<1$ for all $\alpha$ small enough and $v = (I - \alpha \cT(P - \sgn(\cdot) B))^{-1} \cT g$, i.e. \eqref{Mielke 1} is well-posed in $L^p(X)$.

The following result shows that the assumption made in Theorem \ref{main for itegro-dff}(ii) on the invertibility of $b(s)$ with $c=0$ is a consequence of the $L^p$-maximal regularity of \eqref{P no c}. 

\begin{proposition}\label{OM class} Let  $A, B$ and  $P$ be closed, linear operator on a Banach space $X$ with $U\!M\!D$ property such that $D_A \hookrightarrow D_B, D_P$. 

 If \eqref{P no c} has $L^p$-maximal regularity, then 
for every $t\in \R$ the operator 
\[
b(t):= -t^2P + itB + A \in  \cL(Y, X)
\] 
is invertible. 

Furthermore, the functions $a,a_0,a_1$  (given by \eqref{functions}) are bounded, belong to the $\cO_M$-class, and satisfy the $\widetilde\m_\gamma$-condition for all $\gamma\in \N$. In particular, the conclusion of Theorem \ref{main for itegro-dff}(ii) holds for \eqref{P no c}. 
\end{proposition}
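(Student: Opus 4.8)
The plan is to reduce everything to two facts, namely the invertibility of $b$ on all of $\R$ and the norm boundedness of the three symbols $a,a_0,a_1$; once these are in hand, the $\widetilde{\m}_\gamma$- and $\cO_M$-assertions follow from Proposition \ref{main 2}, and the final claim is just a verification of the hypotheses of Theorem \ref{main for itegro-dff}(ii).

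First I would establish that $b(t)$ is invertible for every $t\in\R$. For this I would invoke the Mielke-type argument prepared in the discussion preceding this proposition (specialised to $c=0$): since \eqref{P no c} has $L^p$-maximal regularity, its solution operator $\cT$ is bounded, and because $B,P\in\cL(Y,X)$ the operator $I-\alpha\cT(P-\sgn(\cdot)B)$ is invertible by a Neumann series for all sufficiently small $\alpha>0$. This yields well-posedness of \eqref{Mielke 1} in $L^p$, hence, via \eqref{Mielke 2}, well-posedness of \eqref{P no c} in the exponentially weighted space $L^p(\R,e^{-\alpha|\cdot|}\,\ud t;X)$; the exponential weight produces a strip of invertibility around the real axis and in particular forces $b(t)$ invertible for every real $t$, including $t=0$, so that $A=b(0)$ is invertible. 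I expect pinning down invertibility at the single frequency $t=0$ to be the main obstacle: invertibility for $t\neq0$ is the familiar necessary condition for boundedness of the associated multiplier, whereas the value at $0$ genuinely requires the weighted/Mielke device.

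Next I would prove boundedness of $a,a_0,a_1$. With $b$ invertible everywhere, $\cT$ is translation invariant (constant coefficients) and therefore a Fourier multiplier whose symbol is forced by the equation on the Fourier side to be $a=b^{-1}$; likewise $\cA\cT$, $\cB\partial\cT$ and $\partial\cP\partial\cT$ are multipliers with symbols $Aa$, $(\cdot)Ba$ and $(\cdot)^2Pa$ up to unimodular factors. Since $L^p$-maximal regularity makes all four of these operators bounded on $L^p(X)$, the standard lower bound $\|m\|_{L^\infty(\cL(X))}\le\|m(D)\|_{\cL(L^p(X))}$ (Cl\'ement--Pr\"uss \cite{ClPr01} in fact give the stronger $R$-boundedness of the range) yields $a,Aa,a_0,a_1\in L^\infty$. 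As the graph norm of $Y=D_A$ gives $\|a(t)\|_{\cL(X,Y)}\le\|a(t)\|_{\cL(X)}+\|Aa(t)\|_{\cL(X)}$, this furnishes $a\in L^\infty(\cL(X,Y))$ and $a_0,a_1\in L^\infty(\cL(X))$, exactly the standing hypotheses of Proposition \ref{main 2}.

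Finally I would assemble the conclusions. With $c=0$ the symbol $\hat c\equiv0$ trivially satisfies the $\widetilde{\m}_\gamma$-condition for every $\gamma$, so Proposition \ref{main 2}, combined with the invertibility and boundedness above, gives $a\in\widetilde{\m}_\gamma(\cL(X,Y))$ and $a_0,a_1\in\widetilde{\m}_\gamma(\cL(X))$ for all $\gamma\in\N$; in particular these symbols are $C^\infty$ on $\dot\R$ with uniformly bounded weighted derivatives. Because $b$ is an operator polynomial that is invertible also at $0$, analyticity of inversion makes $a=b^{-1}$ real-analytic across $0$, so $a,a_0,a_1\in C^\infty(\R)$ with bounded (hence slowly increasing) derivatives, which is precisely $\cO_M$-membership; this is what renders the canonical-extension/consistency statements of Proposition \ref{bd multip} applicable. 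The last assertion is then immediate: $\hat c=0\in\m_3(\cL(Y,X))$ and $b(t)$ is invertible for $t\neq0$, so the hypotheses of Theorem \ref{main for itegro-dff}(ii) hold under the assumed $L^p$-maximal regularity, and its conclusion transfers verbatim to \eqref{P no c}.
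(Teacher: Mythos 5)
Your proposal is correct and follows essentially the same route as the paper: the paper compresses the invertibility statement into a citation of \cite[Proposition 3.11]{ApKe20} (whose mechanism is exactly the Mielke device \eqref{Mielke 1}--\eqref{Mielke 2} sketched just before the proposition, which you unpack, correctly flagging $t=0$ as the point where the exponential weight is genuinely needed), and handles the rest by the ``standard argument'' (the Cl\'ement--Pr\"uss-type lower bound $\|a\|_{L^\infty}\le\|a(D)\|_{\cL(L^p)}$ you invoke) together with Proposition \ref{main 2}. Your additional observation that invertibility of $b(0)=A$ plus analyticity of inversion upgrades the $\widetilde\m_\gamma$-bounds on $\dot\R$ to smoothness across $0$, hence $\cO_M$-membership, is precisely the detail the paper leaves implicit.
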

The first statement is a special case of \cite[Proposition 3.11]{ApKe20}. 
The second one follows from standard argument and Proposition \ref{main 2}.

\subsection{The special form of the convolution term} 
For an potential application of Theorem \ref{main for itegro-dff}, it is natural to specify the form of the convolution term in \eqref{P} as follows:  
\begin{equation}\label{c decomp}
c= \partial c_0 + c_1 \quad \textrm{ with } c_j \in \cS'(\cL(Y_j ,X))  
\end{equation}
and further one can assume that 
\[
c_i=  \delta \otimes C_{0,i} + c_{1,i} \textrm{ with } C_{i,0}\in \cL(Y_i, X) \textrm{ and } c_{i,1} \in \cS'(\cL(Y_i,X)).
\]
Then, 
\[
c\ast u = C_{0,0} \partial u +c_{0,1} \ast u + C_{1,0} u +  c_{1,1} \ast u.
\]
When we further take $C_{0,0} :=\alpha_0 I$, $C_{1,0} := \beta_0 C$ for $\alpha_0, \beta_0  \in \C$, $C \in \cL(Y_1, X)$, and $c_{0,1}:=\alpha_1 I$ and $c_{1,1}:= \beta_1 C$ for $\alpha_1, \beta_1 \in S'$, then the convolution term $c\ast u$ corresponds to the one from the heat-conduction problem with memory \cite[Section 5]{Pr93}. 
Such form of the convolutor $c$ was considered in the context of \eqref{P} in \cite{ApKe20}. In this {\it scalar} case one can easily specify the assumptions on the functions $\hat \alpha_1$, $\hat \beta_1$ and the operator $C$,  which imply that $c$ satisfies the assumptions of Theorem \ref{main for itegro-dff}; cf. \cite{ApKe20} and references therein.

\subsection{The abstract convolution equation} Finally,  we comment on \eqref{P} with $A=B=P=0$, that is, we deal only with the convolution equation:
\begin{equation}\label{conv eq}
c\ast u = f \quad \textrm{ in } S'(X).
\end{equation}
Assume that $c$ is of the form \eqref{c decomp}, $Y_0 := X$, $Y_1 := Y \hookrightarrow X$, and $\hat c_i := \cF c_i \in L^1_{loc}(\cL(Y_i, X))$ $(i=0,1)$. Then, 
$\hat c  := \cF c = i(\cdot) \hat c_0 + \hat c_1 \in L^1_{loc}(\cL(Y, X)) $

We specify the assumptions on $c_i$ to get the solvability of \eqref{conv eq}.

\begin{proposition}\label{conv eq est} 
Suppose that $\hat c_i \in \widetilde\m_2(\cL(Y_i,X))$ and for every $s\in \dot \R$
\[ 
\hat c(t) =  it \hat c_0 (t)  + \hat c_1(t)\in \cL(Y,X)
\] is invertible. Set $a(t):= \hat c(t)^{-1}$ and $d(t):=t a(t) \in \cL(X)$,  $t\in \dot \R$. 

If $a\in L^\infty(\cL(X,Y))$ and $d \in L^\infty(\cL(X))$, then 
$a\in \widetilde\m_2((\cL(X,Y))$ and $d \in \widetilde\m_2(\cL(X))$.

Moreover, for every $s\in \R$, $q\in [1,\infty]$, and every Banach function space $\Phi$ such that the Hardy-Littlewood operator $M$ is bounded on $\Phi$ and $\Phi'$, 
the operator $a(D)$ extends to an isomorphism from $B^{s,q}_\Phi(X)$ onto $B^{s,q}_\Phi(Y)\cap B^{s+1,q}_\Phi(X)$. In particular, for every $f\in B^{s,q}_\Phi(X)$, the problem \eqref{conv eq} has a unique solution $u \in B^{s,q}_\Phi(Y)\cap B^{s+1,q}_\Phi(X)$.  
\end{proposition}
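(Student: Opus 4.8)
The plan is to prove the three assertions in turn, reducing the isomorphism statement to the $\widetilde\m_2$-membership of $a$ and $d$, together with Proposition \ref{bd multip}(i) and the lifting characterisation of Lemma \ref{lem equiv norms}. First I would record the pointwise estimates driving everything: from $a\in L^\infty(\cL(X,Y))$ and $d=(\cdot)a\in L^\infty(\cL(X))$ one reads off $\|a(t)\|_{\cL(X,Y)}\lesssim1$ and $\|d(t)\|_{\cL(X)}\lesssim1$, the latter being the crucial $\cL(X)$-decay of $(\cdot)a$. Since $Y_0=X$, the symbol $\hat c_0$ and all its derivatives act on the whole of $X$, so products such as $\hat c_0^{(l)}a$ land in $\cL(X)$ and, after absorbing one power of $t$ into $a$, may be rewritten as $\hat c_0^{(l)}d$ and estimated by $\|\hat c_0^{(l)}\|_{\cL(X)}\|d\|_{\cL(X)}$.

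Differentiating $a=\hat c^{-1}$ as in the proof of Proposition \ref{main 2} gives $a'=-a\hat c'a$, $a''=2a\hat c'a\hat c'a-a\hat c''a$, and in general $a^{(l)}=\sum\pm\,a\,\hat c^{(j_1)}a\cdots\hat c^{(j_m)}a$ with $j_1+\dots+j_m=l$ and each $j_i\ge1$. Grouping such a term as $a\cdot(\hat c^{(j_1)}a)\cdots(\hat c^{(j_m)}a)$, with each block in $\cL(X)$, and bounding the leftmost factor in $\cL(X,Y)$, it suffices to control the diagonal blocks $(1+|t|)^l\hat c^{(l)}a$ in $\cL(X)$ for $l=1,2$, since then $(1+|t|)^l\|a^{(l)}\|_{\cL(X,Y)}\lesssim\|a\|_{\cL(X,Y)}\prod_i\|(1+|t|)^{j_i}\hat c^{(j_i)}a\|_{\cL(X)}\lesssim1$.

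The heart of the matter, and where I expect the one genuinely delicate point, is thus to show $(1+|t|)\hat c'a$ and $(1+|t|)^2\hat c''a$ lie in $L^\infty(\cL(X))$, where $\hat c'=i\hat c_0+i(\cdot)\hat c_0'+\hat c_1'$ and $\hat c''=2i\hat c_0'+i(\cdot)\hat c_0''+\hat c_1''$. The term $i(\cdot)\hat c_0$ does not decay, so when weighted by $(1+|t|)$ it cannot be treated by decay; instead it is tamed by the algebraic identity $i(\cdot)\hat c_0=\hat c-\hat c_1$, which yields $i(\cdot)\hat c_0\,a=I_X-\hat c_1a\in L^\infty(\cL(X))$ via $\hat ca=I_X$ together with $\hat c_1\in\widetilde\m_2(\cL(Y,X))$ and $a\in L^\infty(\cL(X,Y))$. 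Every remaining contribution carries at least one derivative on $\hat c_0$ or sits on $\hat c_1$: the former are reduced through $(\cdot)^k\hat c_0^{(l)}a=(\cdot)^{k-1}\hat c_0^{(l)}d$ and the $\widetilde\m_2$-decay $\|\hat c_0^{(l)}\|_{\cL(X)}\lesssim(1+|t|)^{-l}$, the latter through $\|\hat c_1^{(l)}\|_{\cL(Y,X)}\lesssim(1+|t|)^{-l}$ against $\|a\|_{\cL(X,Y)}\lesssim1$. A direct bookkeeping then gives $(1+|t|)^l\|a^{(l)}\|_{\cL(X,Y)}\lesssim1$ for $l=0,1,2$; the continuous extension of $a$ at $0$ follows from the boundedness of $a'$ near $0$ and the continuity of $\hat c$ at $0$, so $a\in\widetilde\m_2(\cL(X,Y))$, and $d\in\widetilde\m_2(\cL(X))$ follows from $d=(\cdot)a$ by the Leibniz rule.

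For the isomorphism I would argue as follows. By Proposition \ref{bd multip}(i), $a(D)$ extends to $\cL(B^{s,q}_\Phi(X),B^{s,q}_\Phi(Y))$; moreover, since $\partial$ corresponds to the multiplier $i(\cdot)$, we have $\partial\,a(D)=(i\,d)(D)$ with $d\in\widetilde\m_2(\cL(X))$, so $\partial\,a(D)f\in B^{s,q}_\Phi(X)$, and Lemma \ref{lem equiv norms} (with $s$ replaced by $s+1$) upgrades the range to $B^{s,q}_\Phi(Y)\cap B^{s+1,q}_\Phi(X)$. In the reverse direction I would set, for $u$ in this intersection, $\hat c(D)u:=\hat c_0(D)\partial u+\hat c_1(D)u$; Lemma \ref{lem equiv norms} gives $\partial u\in B^{s,q}_\Phi(X)$, and Proposition \ref{bd multip}(i), applied with $\hat c_0\in\widetilde\m_2(\cL(X))$ and $\hat c_1\in\widetilde\m_2(\cL(Y,X))$, makes $\hat c(D)$ bounded from $B^{s,q}_\Phi(Y)\cap B^{s+1,q}_\Phi(X)$ into $B^{s,q}_\Phi(X)$. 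The remaining obstacle is to promote the pointwise identities $a(t)\hat c(t)=I_Y$ and $\hat c(t)a(t)=I_X$ to the operator identities $a(D)\hat c(D)=\mathrm{id}$ and $\hat c(D)a(D)=\mathrm{id}$: on $\cS(X)$ (resp. $\cF^{-1}\cD_0(X)$) these are the multiplier calculus, and I would pass to the full spaces by the consistency and density of Lemma \ref{completness} for $q<\infty$ and, for $q=\infty$, by the weak-$*$ continuity of the extensions recorded in Proposition \ref{bd multip}(i), exactly as in the proof of Theorem \ref{main for itegro-dff}(i). This makes $a(D)$ and $\hat c(D)$ mutually inverse, proving the isomorphism; the final assertion then follows at once, since for $f\in B^{s,q}_\Phi(X)$ the element $u:=a(D)f$ lies in $B^{s,q}_\Phi(Y)\cap B^{s+1,q}_\Phi(X)$ and satisfies $\hat c(D)u=\hat c(D)a(D)f=f$, i.e. solves \eqref{conv eq}, with uniqueness in this class forced by $a(D)\hat c(D)=\mathrm{id}$.
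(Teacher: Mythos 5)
Your proposal is correct and follows essentially the same route as the paper: the $\widetilde{\m}_2$-membership of $a$ and $d$ is obtained from the inverse-derivative expansions \eqref{est 1}--\eqref{est 2} applied to $b=\hat c = i(\cdot)\hat c_0+\hat c_1$ (your identity $i t\,\hat c_0(t)a(t)=I_X-\hat c_1(t)a(t)$ and the substitution $t\,\hat c_0^{(l)}a=\hat c_0^{(l)}d$ are exactly the bookkeeping the paper compresses into ``combining \eqref{est 1}, \eqref{est 2}, and \eqref{est 3} with our assumptions''), and the isomorphism is proved as in the paper via Proposition \ref{bd multip}, Lemma \ref{lem equiv norms}, the density/weak-star consistency of Lemma \ref{completness}, and the operator $\cT_{\hat c_0}\partial+\cT_{\hat c_1}$ as inverse to $\cT_a$. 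Your explicit verification that both composites $a(D)\hat c(D)$ and $\hat c(D)a(D)$ are the identity (rather than only the left inverse stated in the paper) is a welcome clarification of the surjectivity step, not a deviation.
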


\begin{proof} The proof reproduces the arguments already presented in the proofs of Proposition \ref{main 2} and Theorem \ref{main for itegro-dff}. We present only its line and leave the details for the reader. 

First, note that in the context of Proposition \ref{main 2}, here $b = \hat c = i(\cdot ) c_0 + \hat c_1$ and, by Leibniz' rule, for every $\alpha = 1,2,3$ we get 
\[
b^{(\alpha)}(t) = it \hat c_0^{(\alpha)}(t) + \alpha i c_0^{(\alpha -1)}(t) + \hat c_1^{(\alpha)}(t) \quad (t\in \dot \R).
\] 
Therefore, combining \eqref{est 1}, \eqref{est 2}, and \eqref{est 3} with our assumptions on $a$, $c$,  and $d$ we easily get that $a$ and $d$ satisfy the $\widetilde \m_2$-condition.

Now, following the arguments presented in the proof of Theorem \ref{main for itegro-dff}(i) we get that the multipliers $a(D)_|$ and $d(D)_|$ extend  to operators $\cT_a \in \cL(B^{s,q}_\Phi(X), B^{s,q}_\Phi(Y))$ and $\cT_d = \partial \cT_a \in \cL(B^{s,q}_\Phi(X))$, respectively. Similarly, the multipliers $\hat c_i(D)_|$ extend to operators $\cT_{\hat c_i} \in \cL(B^{s,q}_\Phi(Y_i), B^{s,q}_\Phi(X))$ $(i=0,1)$.

Moreover, by a similar density argument as the one applied in the proof of Theorem \ref{main for itegro-dff}, we have that for every $f\in B^{s,q}_\Phi(X)$, the distribution $u:= \cT_a f$ satisfies \eqref{conv eq}, where $c\ast u$ is defined as  
$\cT_{\hat c_0} \partial u + \cT_{\hat c_1} u$. 
To show that $\cT_a$ is an isomorphism in $\cL(B^{s,q}_\Phi(X), B^{s,q}_\Phi(Y)\cap B^{s+1,q}_\Phi(X))$, first note that by Lemma \ref{lem equiv norms}, $\cT_af \in B^{s+1,q}_\Phi(X)$ for every $f\in B^{s,q}_\Phi(X)$, and that $\cT_a$ is onto, since each $u\in B^{s,q}_\Phi(Y)\cap B^{s+1,q}_\Phi(X)$ is a solution of \eqref{conv eq} for $f :=\cT_{\hat c_0} \partial u  + \cT_{\hat c_1} u \in B^{s,q}_\Phi(X)$. 
As we already noted the operator $\cT_{\hat c_0} \partial  + \cT_{\hat c_1}$ is the left inverse to $\cT_a$. It completes the proof.

\end{proof}
The fact that we consider the Besov spaces with negative order $s$ allows us to derive a regularity property of solutions of \eqref{conv eq} with an arbitrary $f\in \Phi(X)$, which  is  similar to that one given in \cite[Corollary 8.3]{Am97}. Note that the $\Phi$-maximal regularity of \eqref{conv eq} can be interpreted as a limit case of such result.

\providecommand{\bysame}{\leavevmode\hbox to3em{\hrulefill}\thinspace}

\bibliographystyle{amsplain}

\end{document}